\begin{document}

\numberwithin{equation}{section}

\newtheorem{theorem}{Theorem}[section]
\newtheorem{lemma}[theorem]{Lemma}
\newtheorem{definition}[theorem]{Definition}
\newtheorem{proposition}[theorem]{Proposition}

\newtheorem{assumption}[theorem]{Assumption}

{
\theoremstyle{definition}
\newtheorem{annahme}{Assumption}
}

\newtheorem{remark}[theorem]{Remark}

{
\theoremstyle{definition}
\newtheorem{bemerkung}{Remark}
}

\renewcommand{\labelenumi}{$\roman{enumi})$}

\newcommand{\di}{\text{div}}
\newcommand{\eps}{\varepsilon}
\newcommand{\ve}{\bold v}
\newcommand{\Ve}{\textbf{V}}
\newcommand{\ue}{\textbf{u}}
\newcommand{\we}{\textbf{w}}
\newcommand{\Je}{\bold{ \tilde  J}}
\newcommand{\weight}[1]{\langle #1\rangle}
\newcommand{\into}{\int \limits_{\Omega}}
\newcommand{\intpo}{\int \limits_{\partial \Omega}}
\newcommand{\intt}{\int \limits_0^T}
\newcommand{\dx}{\mathit{dx}}
\newcommand{\dt}{\mathit{dt}}
\newcommand{\norm}[1]{|| #1 ||}
\newcommand{\R}{\mathbb{R}}

\fancyhead[ER]{}
\fancyhead[EL]{\thepage}
\fancyhead[OR]{\thepage}
\fancyhead[OL]{}
\fancyfoot[C]{}
\renewcommand{\headrulewidth}{0pt}   

\begin{titlepage}
\title{Existence of Weak Solutions for a Diffuse Interface Model
for Two-Phase Flow with Surfactants}
\author{Helmut Abels\footnote{Fakult\"at f\"ur Mathematik,
Universit\"at Regensburg,
93040 Regensburg,
Germany, e-mail: {\sf helmut.abels@mathematik.uni-regensburg.de}}, Harald Garcke\footnote{Fakult\"at f\"ur Mathematik,
Universit\"at Regensburg,
93040 Regensburg,
Germany, e-mail: {\sf harald.garcke@mathematik.uni-regensburg.de}},\ and Josef Weber\footnote{Fakult\"at f\"ur Mathematik,
Universit\"at Regensburg,
93040 Regensburg,
Germany}}
\end{titlepage}

\maketitle
\abstract{
Two-phase flow of two Newtonian incompressible viscous fluids with a soluble surfactant and different densities of the fluids
can be modeled within the diffuse interface approach. 
We consider a Navier-Stokes/Cahn-Hilliard type system coupled to
non-linear diffusion equations that describe the diffusion of the surfactant in the bulk phases as well as along the diffuse interface. Moreover, the surfactant concentration influences the free energy and therefore the surface tension of the diffuse interface. For this system existence of weak solutions globally in time for general initial data is proved. To this end a two-step approximation is used that consists of a regularization of the time continuous system in the first and a time-discretization in the second step.}

{\small\noindent
{\bf Mathematics Subject Classification (2000):}
Primary: 76T99; Secondary:
35Q30, 
35Q35, 
35R35,
76D05, 
76D45\\ 
{\bf Key words:} Two-phase flow, diffuse interface model, variable
surface tension, surfactants, global existence, implicit time
discretization, Navier-Stokes equations, Cahn-Hilliard equation.
}


\section{Introduction and Main Result}

 Two-phase flow of two macroscopically immiscible, viscous,
incompressible fluids with a soluble surfactant is very relevant
in many applications and can be modelled by diffuse interface models.
{\bf Surf}ace {\bf act}ive {\bf a}gen{\bf{ts}} (surfactants)
are chemical substances which lower the surface tension at fluid interfaces.
They play an important role as
detergents, emulsifiers, wetting agents, dispersants and
foaming agents and hence mathematical modeling, analysis and numerical
computations involving  surfactants  have many potential applications.
In order to describe surfactants one has to model the fluid  flow in the
bulk regions away from the interface which is typically done with the help of the
Navier-Stokes equations. In addition, diffusion and advection of the surfactants has to be taken into account both at the interface and in the bulk. At the interface transport phenomena and force balances also involving  surface tension effects have to be modeled. As the presence of surfactants leads to a non-constant surface tension also Marangoni forces at the interface play an important role.

Classically the effects of surfactants at fluid interfaces are modeled within the context of sharp interface models, i.e., the interface is modeled as a hypersurface. We refer to \cite{article:BothePruss10, incoll:BothePrussSimonett, MR2213404}
for analytical results and to \cite{AHKN, BGNunsoluble, BGNsoluble, GanesanT12, article:JamesLowengrub04, article:KhatriTornberg11, article:LaiTsengHuang08, article:MuradogluTryggvason08, article:XuLiLowengrubZhao06}
for numerical results on sharp interface approaches for surfactant flow.

However, in the context of sharp interface approaches a change of topology, i.e.,
for example a splitting of a droplet or a reconnection of droplets, is difficult to describe theoretically and typically happens in an ad-hoc way numerically.
Therefore, in the last ten years diffuse interface approaches
(also called phase field approaches) have been introduced in the context
of surfactants in fluid  flow. 
We refer to \cite{preprint:EngblomDoQuangAmbergTornberg, MR3210738, 
article:LiKim12,article:LiuZhang10, article:TeigenLiLowengrubWangVoigt09, article:TeigenSongLowengrubVoigt11,
article:vanderSmanvanderGraaf06} for such approaches.

 In this paper
we will rely on the work of Garcke, Lam and Stinner \cite{MR3210738}
where for the first time   diffuse interface models were introduced which allowed for a physically sound energy inequality.
This fact is crucial for the analysis in this paper as it allows to derive important a priori estimates. The models in \cite{MR3210738} generalize
the thermodynamically consistent diffuse interface model for two-phase flow
with different densities introduced in \cite{MR2890451} to  situations
with surfactants.  
Existence of solutions for the model in \cite{MR2890451} has been shown
in \cite{MR3084319,MR3132421}. However, the case with surfactants turns out
to be much more involved due to the complex, nonlinear coupling of the
Navier-Stokes/Cahn-Hilliard system to the equations describing the surfactants.
This paper gives a first existence result for this general situation.

In the following we consider a diffuse interface model for such a
two-phase flow, where a partial mixing of the macroscopically immiscible
fluids is
taken into account and an order parameter, here in form of the
difference of volume fractions $\varphi$, is introduced. The interface
is no longer treated explicitly in form of a lower dimensional surface.
It is replaced by an interfacial region, where the order parameter
$\varphi$ is not close to $+1$ or $-1$. In a
sufficiently smooth situation the interfacial region has a  thickness
proportional to  $\eps>0$, where $\eps>0$ is a parameter in the system.

The model that we will analyze is a variant of the ``model C'' in \cite{MR3210738}. In the latter contribution different models for the case of soluble and insoluble surfactants were derived. Here we consider a model, where the surfactant is soluble in both fluids and accumulates at the interface. The model leads to the system
\begin{align}
\partial_t (\rho \ve) + \di ( \ve \otimes ( \rho \ve + \Je )) + \nabla p &- \di  ( 2 \eta (\varphi) D\ve  ) =  -\di ( \varepsilon \nabla \varphi \otimes \nabla \varphi   ) &&  \text{ in } Q , \label{equation_model_0}  \\
\di \ \ve &= 0  && \text{ in } Q , \label{equation_model_nabla_v}  \\
\partial_t^\bullet \left ( \frac{1}{\varepsilon} f (q) W (\varphi) + g (q) \right ) &= \di \left ( m( \varphi , q)  \nabla q \right )    && \text{ in } Q ,  \label{equation_model_1} \\
\partial_t^\bullet \varphi & = \di ( \tilde m (\varphi) \nabla \mu ) && \text{ in } Q  , \label{equation_model_2} \\
 - \varepsilon \Delta \varphi + h(q)  \frac{1}{\varepsilon} W ' (\varphi)  &= \mu && \text{ in } Q  ,  \label{equation_model_3} 
\end{align}
where $Q=\Omega\times (0,\infty)$, $\partial_t^\bullet:=\partial_t +\ve \cdot \nabla$ is the material time derivative and 
\begin{align}\label{eq:Je}
\Je &= \frac{\partial \rho (\varphi)}{\partial \varphi} (-  \tilde m (\varphi) \nabla \mu ) .
\end{align}
Throughout the paper $\Omega\subseteq \R^d$, $d=2,3$, is a bounded domain with $C^2$-boundary.
We close the system by prescribing initial values
\begin{align}\label{equation_model_inital_value_v}
\ve_{|t=0} &= \ve_0,  \quad \varphi_{|t=0} = \varphi_0, && \quad \nonumber \\
  \left ( \frac{1}{\varepsilon} f(q) W(\varphi) +g(q) \right )_{|t=0} &= \frac{1}{\varepsilon}f(q_0) W(\varphi_0) + g(q_0)   && \text{ in } \Omega , 
\end{align}
and boundary conditions
\begin{align}\label{equation_model_boundary_condition}
\ve_{| \partial \Omega} =   \partial_n \varphi_{| \partial \Omega} =  \partial_n q _{| \partial \Omega} =  \partial_n \mu _{| \partial \Omega} = 0    && \text{ on } \partial \Omega \times (0, \infty).
\end{align}
In this model, $\ve$ is the mean velocity of both fluids and $D \ve :=
\frac{1}{2} ( \nabla \ve + \nabla \ve^T )$. Moreover, $\varphi$ denotes
the order parameter, which is taken as the volume fraction difference of
both fluids, $\varepsilon > 0$ is a constant parameter related to the
``thickness" of the interfacial region and $p$ is the pressure.  The
viscosity of the mixture is denoted by $\eta (\varphi) > 0$ and $W\colon
\R\to \R$ is a potential, which is part of the free energy of the fluid
mixture and of double well shape with two global minima at $\pm 1$. The
chemical potential for the fluids is denoted by $\mu$, while $q$ is the
chemical potential for the surfactants.   
 Furthermore, $\tilde m (\varphi ) >0$ and $m (\varphi , q) > 0$ are the mobilities for the phase field and the surfactant diffusion equations, respectively.
The density $\rho$ depends on $\varphi$ and is given by
\begin{align*}
\rho (\varphi) = \frac{\tilde \rho_1 + \tilde \rho_2}{2} + \frac{\tilde \rho_2 - \tilde \rho_1}{2} \varphi \qquad \text{ if } \varphi \in [-1,1] .
\end{align*} 
In (\ref{equation_model_0}), 
 $\Je$ is a flux of the fluid density relative to $\rho \ve$.  The term $h(q)$ is  the square of the surface tension.  Furthermore, thermodynamic relations 
\begin{align}\label{eq:Relations}
 d' (q)=f' (q) q  , \quad
d(q) = h(q) + f(q) q  ,\quad
G' (q) = g'(q) q  .
\end{align}
are required. The first two equations imply $h'=-f$ and hence $d=h-h'q$. This means that $d$ is the Legendre transform of $h$
and this fact is related to the classical Gibbs relation from
thermodynamics, see \cite{AGLW}. 
In \cite{AGLW} it is discussed how the diffuse interface model \eqref{equation_model_0}-\eqref{equation_model_3} can be related to classical descriptions,
see \cite{article:BothePruss10, incoll:BothePrussSimonett,
 MR2213404},
   involving a sharp
interface.

The equations \eqref{equation_model_0} and \eqref{equation_model_nabla_v} describe the conservation of mass and linear momentum of the fluid mixture, \eqref{equation_model_1} describes the diffusion of the surfactants in the bulk of both fluids as well as along the diffuse interface and \eqref{equation_model_2}-\eqref{equation_model_3} describes the dynamics of the diffuse interface, the order parameter $\varphi$, respectively. Moreover,
$\eps\frac{|\nabla \varphi|^2}2+\frac{1}{\varepsilon} h(q) W (\varphi) $ is the free energy density of the fluid mixture and $\frac{1}{\varepsilon} f(q)q W (\varphi)+ G(q)$ is the free energy density of the surfactants at the diffuse interface (related to $f(q)q$) and in the bulk (related to $G(q)$). Hence the total free energy density is given by
\begin{equation*}
  \eps\frac{|\nabla \varphi|^2}2+\frac{1}{\varepsilon} d(q) W (\varphi) +G(q).
\end{equation*}

The total energy  of the system is given by
\begin{alignat}{1}
E_{tot}( \ve, \varphi, q) \label{equation_definition_total_energy}
&:=  \into \left (  \frac{1}{2} \rho(\varphi) |\ve|^2 +  \frac{\varepsilon}{2} |\nabla \varphi|^2  + \frac{d(q)}{\varepsilon} W (\varphi) + G(q) \right ) \dx .
\end{alignat}
Sufficiently smooth solutions of \eqref{equation_model_0}-\eqref{equation_model_3}, satisfying $\varphi(x,t)\in [-1,1]$ for all $x\in\Omega$, $t\in (0,\infty)$, together with the boundary conditions \eqref{equation_model_boundary_condition} satisfy the energy identity
\begin{alignat}{1}
  &\frac{d}{dt} E_{tot} (\ve,\varphi,q)
  = -\int_{\Omega}2\nu(\varphi)|D\ve|^2\,dx -\int_{\Omega}2\tilde{m}(\varphi)|\nabla \mu|^2\,dx -\int_{\Omega}m(\varphi,q)|\nabla q|^2\,dx 
\label{enineq}
\end{alignat}
for all $t\in (0,T)$. This is proved by testing \eqref{equation_model_0}, \eqref{equation_model_1}-\eqref{equation_model_3} with $\ve$, $q$, $\mu$, and $\partial_t \varphi$, respectively, integration by parts and using the relations \eqref{eq:Relations}. We refer to \cite[Section~2.2]{Dissertation_Weber} for the details.

To this end it is essential that
 $\frac{\partial \rho (\varphi)}{\partial \varphi}$ for all $\varphi\in [-1,1]$ is constant and therefore the continuity equation
\begin{align*}
 \partial_t \rho  (\varphi)  + \ve \cdot \nabla \rho (\varphi) + \di\, \Je = 0
\end{align*}
holds.

If $h(q)\equiv 1$, then
\eqref{equation_model_0}-\eqref{equation_model_nabla_v},
\eqref{equation_model_2}-\eqref{equation_model_3} is the diffuse
interface model for a two-phase flow of incompressible, viscous fluids
with different densities (without surfactants) that was derived in
\cite{MR2890451}. Existence of weak solutions to the model was shown in \cite{MR3084319} for non-degenerate mobility $m(\varphi)$ and singular free energy density $W$ and for degenerate mobility in \cite{
MR3132421}. Existence and uniqueness of strong solutions locally in time was shown in \cite{Dissertation_Weber}. We refer to \cite{Gigahandbook} for an overview of the different  models for two-phase flows without surfactants and analytic results.  

In the following analysis we will not be able to assure $\varphi(x,t)\in [-1,1]$ for all $x\in\Omega$, $t\in (0,\infty)$ since $\varphi$ solves a fourth order parabolic equation and no comparison principle is available. Moreover, we are not able to work with singular free energies or degenerate mobility as e.g. in \cite{MR3084319,MR3132421}. Therefore we extend $\rho\colon [-1,1]\to(0,\infty)$ as above to a continuously differentiable, bounded function $\rho\colon \R\to (0,\infty) $ with bounded derivative satisfying $\inf_{s\in \R}\rho(s)>0$.

In  a situation where the order parameter obtains values outside
of the interval $ [-1,1]$,  we obtain a modified continuity equation given by 
\begin{align}\nonumber
 &\partial_t \rho  (\varphi)  + \ve \cdot \nabla \rho (\varphi) + \di \Je  \\
&= \frac{\partial \rho (\varphi)}{\partial \varphi} \partial_t^\bullet \varphi + \di \Je \nonumber \\
 &= - \nabla \frac{\partial \rho (\varphi)}{\partial \varphi} \cdot \left ( \tilde m (\varphi) \nabla \mu \right ) = \frac{\partial \rho (\varphi)}{\partial \varphi} \di ( \tilde m (\varphi) \nabla \mu )  - \di \left ( \frac{\partial \rho (\varphi)}{\partial \varphi} \tilde m (\varphi) \nabla \mu \right )=:R \label{identity_R_tilde_R}
\end{align}
where we used (\ref{equation_model_2}). 
In order to preserve the energy identity above we modify \eqref{equation_model_0} to
\begin{align}
\partial_t (\rho \ve) + \di ( \ve \otimes ( \rho \ve + \Je )) + \nabla p &- \di  ( 2 \eta (\varphi) D\ve  ) -  \frac{R\ve}{2} =  -\di ( \varepsilon \nabla \varphi \otimes \nabla \varphi   )  \text{ in } Q_T. \label{equation_model_0'} 
\end{align}
Here $\frac{1}{2} R \ve$ describes the change in the kinetic energy associated to the source term $R$ in the modified continuity equation.
We note that this modified continuity equation and the additional source term $R$ is also used in \cite{Preprint150905663v1}. We also remark that the additional source term is zero for physically meaningful values $\varphi\in [-1,1]$.

With (\ref{equation_model_3}), equation (\ref{equation_model_0'}) can equivalently be written as
\begin{align*}
\partial_t (\rho \ve) + & \di ( \ve \otimes ( \rho \ve + \Je )) + \nabla p - \di  ( 2 \eta (\varphi) D\ve  ) -  \frac{R\ve}{2} =   \left ( \mu - \frac{h(q)}{\varepsilon} W' (\varphi) \right ) \nabla \varphi    &&  \text{ in } Q_T ,
\end{align*}
where a suitable scalar function is added to $p$, see  also
\cite{MR3210738}.

Before we state our main result we  define a  weak solution for \eqref{equation_model_nabla_v}-\eqref{equation_model_boundary_condition}, \eqref{equation_model_0'}:
\begin{definition}\label{definition_without_delta_weak_solutions}
Let $\bold v_0 \in L^2_\sigma (\Omega) $, $\varphi_0 \in H^2_n (\Omega):=\{u\in H^2(\Omega):\partial_n u|_{\partial\Omega}=0\} $, $q_0 \in L^2 (\Omega) $ be given. We call $(\bold v, \varphi, \mu, q)$ with the properties
\begin{align*}
\bold v & \in  L^2 (0,\infty; H^1_{0} (\Omega)^d ) \cap L^\infty (0,\infty; L^2_\sigma (\Omega)), 
&\varphi & \in L^\infty (0,\infty; H^1 (\Omega)) \cap L^2 (0,\infty; H^2 (\Omega)) , \\
\mu & \in L^2_{loc} ([0,\infty); H^1 (\Omega)) , 
&q & \in L^2 (0,\infty; H^1 (\Omega)) \cap L^\infty (0,\infty; L^2 (\Omega)) 
\end{align*}
a weak solution of \eqref{equation_model_nabla_v}-\eqref{equation_model_boundary_condition}, \eqref{equation_model_0'} if the following equations are satisfied:
\begin{align}
& - \int \limits_0^\infty  \into \rho \bold v \cdot \partial_t  \boldsymbol \psi \dx \dt -\int_\Omega \rho(\varphi_0)\ve_0\cdot\boldsymbol \psi|_{t=0}\dx - \int \limits_0^\infty \into ( \rho \bold v \otimes \bold v ) : \nabla \boldsymbol \psi \dx \dt   - \left \langle \frac{ R \bold v}{2}, \boldsymbol \psi \right \rangle   \label{eq:WeakNSt}\\\nonumber
& +  \int \limits_0^\infty \into 2 \eta (\varphi) D \bold v : D  \boldsymbol \psi \dx \dt- \int \limits_0^\infty \into ( \Je \otimes \bold v ) : \nabla \boldsymbol \psi \dx \dt  = \int \limits_0^\infty \into  \left ( \mu - \frac{h(q)}{\varepsilon} W' (\varphi) \right ) \nabla  \varphi \cdot \boldsymbol \psi \dx \dt 
\end{align}
for all $ \boldsymbol \psi \in C^\infty_{(0)} ([0,\infty); C^\infty_{0, \sigma} (\Omega))$ and
\begin{align}
   \int \limits_0^\infty \into \left ( f(q) W(\varphi) + g(q) \right ) \partial_t \phi \dx \dt &+ \into \left ( f(q_0) W(\varphi_0) + g(q_0) \right ) \phi|_{t=0} \dx  \nonumber \\\label{eq:WeakQ}
 \ \ \  +  \int \limits_0^\infty \left ( \frac{1}{\varepsilon} f(q) W(\varphi) + g(q) \right ) \bold v \cdot \nabla \phi \dx \dt  &=\int \limits_0^\infty \into m (\varphi, q) \nabla q \cdot \nabla \phi \dx \dt , \\\nonumber
 \int \limits_0^\infty \into \tilde m (\varphi) \nabla \mu \cdot \nabla \phi \dx \dt &= \int \limits_0^\infty \into \varphi \partial_t \phi \dx \dt + \into \varphi_0 \phi|_{t=0} \dx -  \int \limits_0^\infty \into \nabla \varphi \cdot \bold v \ \phi \dx \dt , \\\nonumber
\int \limits_0^\infty \into \mu \phi \dx \dt &= \int \limits_0^\infty \into \varepsilon \nabla \varphi \cdot \nabla \phi \dx \dt + \int \limits_0^\infty \into \frac{1}{\varepsilon} h(q) W' (\varphi) \phi \dx \dt 
\end{align}
for all $\phi \in C^\infty_{(0)} ([0,\infty); C^1 (\overline \Omega))$, where $\Je$ is defined by \eqref{eq:Je} and
\begin{align}\label{definition_R_delta_to_0}
 \left \langle \frac{ R \bold v}{2} , \boldsymbol \psi \right \rangle  := \frac12 \int \limits_0^\infty \into  \nabla \frac{\partial \rho}{\partial \varphi}(\varphi) \cdot ( \tilde m (\varphi) \nabla \mu) \bold v \cdot \boldsymbol \psi \dx \dt  .
\end{align}
Moreover, the energy inequality
\begin{align}\nonumber
&E_{tot} (\bold v (t), \varphi (t) , q (t)) 
  + \int \limits_s^t \into \left ( m(\varphi, q) |\nabla q|^2 + \tilde m (\varphi) |\nabla \mu|^2 + 2 \eta (\varphi) |D \bold v|^2 \right ) \dx \mathit{d \tau} \nonumber \\\label{energy_inequality}
&\qquad \leq E_{tot} ( \bold v (s), \varphi (s), q(s)) .
\end{align}
 has to hold for all $ t \in [s, \infty)$ and almost all $s \in [0, \infty)$ including $s=0$, where $E_{tot}$ is defined as in \eqref{equation_definition_total_energy}.
\end{definition}
We refer to Section~\ref{sec:Prelim} below for the definition of the function spaces above.\\
{\bf Remark:} We note that \eqref{eq:WeakQ} contains a weak formulation for the initial value of $f(q)W(\varphi)+g(q)$. More precisely using \eqref{eq:WeakQ}
and Assumption \ref{assumption12}, one can show
\begin{equation*}
  f(q)W(\varphi)+g(q)\in W^1_{\frac{4}{3}} (0, T ; H^{-1} (\Omega))\hookrightarrow C([0,T];H^{-1}(\Omega))
\end{equation*}
and $f(q)W(\varphi)+g(q)|_{t=0}= f(q_0)W(\varphi_0)g(q_0)$ in $H^{-1}(\Omega)$.
By this the initial value of $q$ is prescribed implicitly since $q\mapsto f(q)W(\varphi)+g(q)$ is strictly monotone, cf.\ Assumption~\ref{assumption12} below, and the initial value for $\varphi$ is determined by the evolution equation for $\varphi$. Finally, we note that \eqref{eq:WeakNSt} prescribes the initial value of $\rho\ve$ as $\rho(\varphi_0)\ve_0$. Divergence free test functions are sufficient for this because of the same observations as in \cite[Section~5.2]{MR3084319}.

To obtain the existence of weak solutions in the sense of Definition \ref{definition_without_delta_weak_solutions}, we make the following assumptions.
\begin{assumption}\label{assumption12}
We assume that $f \in C^\infty (\mathbb R)$ is monotonically increasing and $G \in C^2 (\mathbb R)$ is strictly convex such that
\begin{align*}
G' (q) = \begin{cases} < c_0 q & \text{if } q < 0, \\ = 0 & \text{if } q = 0, \\ > c_0 q & \text{if } q > 0 \end{cases}
\end{align*}
and $G' (q) = g' (q) q$ for all $q \in \mathbb R$. Moreover, we assume that there exists a constant $C > 0$ such that
\begin{align*}
|G(q)|  \leq C   ( |q|^2 + 1) , \qquad
|G'(q)|  \leq C  ( |q| + 1) 
\end{align*}
for all $q \in \mathbb R$. 
The functions $d,f, h , W, \tilde{m} \colon \R\to \R $ are continuously
differentiable, $m\colon \R^2\to \R$, $\eta\colon \R\to \R$ are continuous, the identities  \eqref{eq:Relations} are satisfied, and   $\Omega \subseteq \mathbb R^d$, $d= 2,3$, is a bounded domain with $C^2$-boundary. Moreover, there exist some constants $0 < c_1 <  c_2 < \infty$ such that
\begin{align*} 
& d (q)   >  c_1 ,  &&W(\varphi)  \geq 0 ,
\\
& f' (q) q  = d' (q) ,   && c_1 \leq  m( \varphi , q), \tilde m (\varphi),  \eta (q) \leq c_2
\end{align*}
for all $q, \varphi \in \mathbb R$. 
Furthermore, $h$ is concave and there exist constants $q_{min}, q_{max} \in \mathbb R$ with $q_{min} < q_{max}$ such that $d(q) \equiv const.$ for all $q \notin [q_{min} , q_{max}]$.
\\
For the growth of $W$ and $W'$ we assume that there exist some constants $C_1, C_2 , C_3 > 0$ such that
\begin{align}\label{growth_condition_W}
|W (a) | \leq C_1 ( |a|^3 + 1) ,  \quad |W' (a)| & \leq C_1 ( |a|^2 + 1) ,  \quad
W(a)  \geq C_2 |a| - C_3
\end{align}
holds for all $a \in \mathbb R$. If it holds $\frac{\partial\rho (\varphi) }{\partial \varphi} \not \equiv const$, then there exists a constant $C > 0$ and $0 < s < 1$ such that
\begin{align*}
|W' (a)| \leq C (|a|^s + 1)\qquad \text{for all }a\in \mathbb R.
\end{align*}
\end{assumption}
From these assumptions it follows that $g$ is strongly monotone due to
\begin{align*}
(g(a) - g(b)) (a - b) & = \int \limits_b^a g' (x) \dx (a-b) \geq c_0 (a-b)^2.
\end{align*}
Moreover, these assumptions imply $f' (q) = 0$ for all $ q \notin [q_{min} , q_{max}]$ as it holds  $f'(q) q = d' (q)$. In particular, we observe that $f$ is a bounded function.
Due to $h(q) = d(q) - f(q)q$, we can deduce that there exists a constant $C > 0$ such that
\begin{align}\label{growth_condition_h}
 |h(q)| & \leq C (|q| + 1) .
\end{align}
In applications physically relevant values for $q$ lie in a bounded
interval and hence the assumptions for the values of functions
for $|q|$ large give no restrictions
in practice. This is due to the fact that we can modify the functions
for large $|q|$ as we like.

Our main result is:
\begin{theorem}[Existence of weak solutions]\label{main_theorem_existence_of_weak_solutions}
~
\\
Let $0 < T < \infty$ and $\bold v_0 \in L^2_\sigma (\Omega)$, $\varphi_0 \in H^2_n (\Omega)$ and $q_0 \in L^2 (\Omega)$ be given. Then there exists a weak solution $(\bold v, \varphi, \mu, q)$ in the sense of Definition \ref{definition_without_delta_weak_solutions}. 
\end{theorem}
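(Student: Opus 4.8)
The strategy is a two-step approximation as announced in the abstract: first regularize the time-continuous system, then discretize in time. Concretely, I would introduce a parameter $\delta>0$ and add to equation \eqref{equation_model_1} a regularizing term (for instance $-\delta\Delta q$ together with a term making $q$ satisfy a non-degenerate parabolic equation with enough integrability), so that the highly nonlinear coupling through $f(q)W(\varphi)+g(q)$ is tamed and $q$ gains $L^2(0,T;H^2)$-type regularity. One also mollifies the coefficients where needed. For fixed $\delta$, I would then set up an implicit (backward Euler) time discretization with step size $h>0$: given the values $(\ve^{k-1},\varphi^{k-1},q^{k-1})$ at time $t_{k-1}$, solve the stationary coupled system for $(\ve^k,\varphi^k,\mu^k,q^k)$ at time $t_k$. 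Existence of solutions to each time-discrete stationary problem is obtained via a Leray--Schauder fixed-point argument (or via minimization of a suitable convex-in-the-leading-terms functional coupled with a fixed point for the transport/velocity part), using the strong monotonicity of $g$ and the coercivity coming from $d(q)\ge c_1$, $W\ge 0$, and the bounds $c_1\le m,\tilde m,\eta\le c_2$.

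The second step is to derive $\delta$- and $h$-uniform a priori estimates by mimicking the formal energy identity \eqref{enineq} at the discrete level. Testing the discrete momentum equation with $\ve^k$, the discrete surfactant equation with $q^k$, the discrete Cahn--Hilliard equation with $\mu^k$, and the discrete chemical-potential equation with the discrete time difference of $\varphi$, then summing over $k$ and using the thermodynamic relations \eqref{eq:Relations}, should yield a discrete energy inequality of the form $E_{tot}$ at the final time plus the dissipation integrals $\le E_{tot}$ at the initial time, up to controllable error terms from the time discretization (these are nonnegative convexity-type remainders for the quadratic kinetic and gradient terms, and remainders handled by the convexity of $G$ and monotonicity of $f$; the modified source term $\tfrac12 R\ve$ in \eqref{equation_model_0'} is precisely what makes the kinetic-energy balance close even when $\varphi\notin[-1,1]$, via the modified continuity equation \eqref{identity_R_tilde_R}). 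From this one extracts: $\ve$ bounded in $L^\infty(L^2_\sigma)\cap L^2(H^1_0)$, $\varphi$ bounded in $L^\infty(H^1)$, $q$ bounded in $L^2(H^1)\cap L^\infty(L^2)$ using $d(q)\ge c_1$ and the coercivity $G(q)\ge$ (something controlling $|q|^2$) together with $W(a)\ge C_2|a|-C_3$; then elliptic regularity applied to \eqref{equation_model_3} (bootstrapping with the growth bounds \eqref{growth_condition_W} on $W'$ and \eqref{growth_condition_h} on $h$, plus Sobolev embeddings in $d\le 3$) gives $\varphi\in L^2(H^2)$ and $\mu\in L^2(H^1)$. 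One also needs uniform bounds on the discrete time derivatives: $\partial_t(\rho\ve)$, $\partial_t\varphi$, and $\partial_t(f(q)W(\varphi)+g(q))$ in suitable negative-order spaces (the remark after the definition indicates $W^1_{4/3}(0,T;H^{-1})$ for the last one), which come from the equations themselves once the energy bounds are in hand.

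Next I would pass to the limit, first $h\to 0$ for fixed $\delta$ and then $\delta\to 0$ (or in a combined way). Using the uniform bounds plus Aubin--Lions--Simon compactness, the piecewise-constant/piecewise-linear interpolants of the discrete solutions converge: $\ve\to\ve$ strongly in $L^2(0,T;L^2)$ (hence a.e.), $\varphi\to\varphi$ strongly in $L^2(0,T;H^1)$ and in $C([0,T];L^r)$ for suitable $r$, and $q\to q$ strongly in $L^2(0,T;L^2)$. The a.e. convergence together with the continuity and growth assumptions on $W,W',f,g,h,\rho,\eta,m,\tilde m$ (all designed to be at most of the relevant polynomial order so that the nonlinear terms are equi-integrable via Vitali) lets me identify all nonlinear limits; in particular the troublesome products $\mu\nabla\varphi$, $h(q)W'(\varphi)\nabla\varphi$, $(\rho\ve\otimes\ve):\nabla\boldsymbol\psi$, $f(q)W(\varphi)\ve\cdot\nabla\phi$, and $\nabla\tfrac{\partial\rho}{\partial\varphi}(\varphi)\cdot(\tilde m(\varphi)\nabla\mu)\ve\cdot\boldsymbol\psi$ pass to the limit (for the last one the extra assumption $|W'(a)|\le C(|a|^s+1)$, $s<1$, when $\rho'$ is non-constant is exactly what provides the little bit of extra integrability/compactness needed to control the $R\ve$ term). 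The energy inequality \eqref{energy_inequality} survives the limit by weak lower semicontinuity of the convex dissipation functionals and of $E_{tot}$, with the ``for almost all $s$'' qualifier arising in the usual way from the fact that the limiting energy is only a BV/monotone-in-the-limit object.

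\textbf{Main obstacle.} The crux is not any single estimate but the simultaneous control of the fully nonlinear coupling in \eqref{equation_model_1}: the quantity being differentiated in time, $\tfrac1\varepsilon f(q)W(\varphi)+g(q)$, mixes $q$ and $\varphi$ multiplicatively, so the natural energy estimate only gives $\nabla q\in L^2$ and $q\in L^\infty(L^2)$ — which is a priori too weak to make sense of, and to pass to the limit in, the bulk/interface surfactant flux and its interaction with the density flux $\Je$ and the source $R$. This is precisely why the $\delta$-regularization (adding diffusion/regularity to the $q$-equation) is introduced in the first step, and why the most delicate part of the argument is showing that the bounds obtained are uniform in $\delta$ and that the limit $\delta\to 0$ recovers a genuine weak solution rather than a measure-valued or defect-laden object; the tailored growth conditions in Assumption~\ref{assumption12} (the cubic bound on $W$, the sublinear bound on $W'$ in the variable-density case, the at-most-quadratic bounds on $G$, the eventual constancy of $d$ outside $[q_{min},q_{max}]$) are exactly the hypotheses that make this last limit work.
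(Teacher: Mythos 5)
Your high-level architecture — regularize, discretize in time implicitly, derive a discrete energy inequality mimicking \eqref{enineq}, extract $\delta$- and $h$-uniform bounds, pass to the limit in two stages — matches the paper's, and you correctly flag the source term $\tfrac12 R\ve$ and the surfactant coupling $f(q)W(\varphi)+g(q)$ as the critical difficulties. But there are two concrete places where your plan departs from what is actually needed and, as written, would not close.

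First, the regularization is placed in the wrong equation. You propose to add a term like $-\delta\Delta q$ to \eqref{equation_model_1} to make $q$ parabolically regular. The paper instead adds $\delta\Delta^2\ve$ to the momentum equation and $\delta\partial_t\varphi$ to \eqref{equation_model_3}, leaving the $q$-equation untouched. This choice is load-bearing: the term $\delta\partial_t\varphi$ gives $\partial_t\varphi\in L^2(Q_T)$ uniformly in $h$, which is precisely what makes the alternative form $\tilde R$ of the source term (involving $\partial_t\rho(\varphi)$, see \eqref{definition_Rv_2_psi}) meaningful and allows the discrete energy inequality to be closed; and $\delta\Delta^2\ve$ gives $\ve\in L^2(0,T;H^2)$, which together with the improved $\varphi$-regularity (via the splitting $\varphi^\delta=\varphi_1^\delta+\varphi_2^\delta+\varphi_0$ of \eqref{equation_splitted_phi_phi_1}--\eqref{equation_splitted_phi_mean_value_phi_1_phi_2} and the $\delta$-uniform maximal regularity of Lemma~\ref{lemma_phi_solves_equation_independent_of_delta}) is what makes the trilinear $R\ve$ term \eqref{prove_delta_rightarrow_0_tilde_R_v_delta_tested} estimable and passable to the limit when $\rho'\not\equiv\text{const}$. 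Regularizing the $q$-equation does none of this — and whatever smoothness it bought for $q$ would be lost as $\delta\to 0$ anyway.

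Second, and more seriously, the strong compactness of $q$ in $L^2(Q_T)$ is not a standard Aubin--Lions application, because the equations control $\partial_t\bigl(\tfrac1\varepsilon f(q)W(\varphi)+g(q)\bigr)$ in a negative space, not $\partial_t q$ itself. The paper's argument here is the genuinely new step: using that $q\mapsto F(\varphi,q):=\tfrac1\varepsilon f(q)W(\varphi)+g(q)$ is strongly monotone (from Assumption~\ref{assumption12}, $W\ge 0$, $f$ monotone, $g$ strongly monotone), one bounds $\|q(\cdot+\lambda)-q\|_{L^2(Q_{T-\lambda})}^2$ by $\int\!\!\int(F(\varphi,q_{\lambda+})-F(\varphi,q))(q_{\lambda+}-q)$, splits this into a piece controlled by the H\"older-in-time estimate \eqref{delta_inequality_varphi_N_difference_l2} for $\varphi$ and a piece controlled by integrating the $q$-equation in time, and thereby verifies the translation criterion of Theorem~\ref{theorem_simon_5}. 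Your proposal replaces this with ``regularize so $q\in L^2(H^2)$ and then Aubin--Lions--Simon,'' which does not provide a $\delta$-uniform mechanism; without the monotonicity argument the limit $\delta\to 0$ does not give strong (hence a.e.) convergence of $q$, and the nonlinear terms $f(q)W(\varphi)$, $g(q)$, $h(q)W'(\varphi)$ cannot be identified.

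In short: the skeleton of your plan is right, but the specific $\delta$-regularization must be on $\ve$ and $\partial_t\varphi$ (not $q$) to handle the source $R$, and the compactness of $q$ requires the strong-monotonicity translation estimate rather than a regularization/Aubin--Lions shortcut.
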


\noindent
{\bf Remark:} Due to the term 
$\partial_t^\bullet \left ( \frac{1}{\varepsilon} f (q) W (\varphi) + g
(q) \right )$ in (\ref{equation_model_1}) one of the main difficulties
in proving Theorem 1.3 is to show compactness with respect to $q$
in suitable approximating problems. The structural assumptions on
$d,f,g,h, G$ will first allow us to show an energy inequality 
related to (\ref{enineq}) and then enables us to show compactness of
$q$. Moreover, the treatment of the term $R$ is subtle. To the end we approximate the system in two steps. First we regularize the system in order to obtain $\partial_t\varphi \in L^2((0,\infty)\times\Omega))$. Then we solve the regularized system with the aid of an implicit time discretization.

\vskip 5mm
\noindent{\bf Acknowledgements:} The authors acknowledge support by the SPP 1506 ``Transport Processes
at Fluidic Interfaces'' of the German Science Foundation (DFG) through grant GA695/6-1 and GA695/6-2. The results are part of the third author's PhD-thesis~\cite{Dissertation_Weber}.

\section{Preliminaries} \label{sec:Prelim}

\textbf{Notation:} Let $X$ be a Banach space and let $X'$ be its dual
space. Then the duality product is given by  $\weight{x', x}_{X', X} = \weight{x', x} = x' (x)$, where $x' \in X'$ and $x \in X$.
The natural numbers without $0$ are denoted by $\mathbb N$ and we set $\mathbb N_0 := \mathbb N \cup \{0\}$. For $\bold a , \bold b \in \mathbb R^d$, we define $\bold a \otimes \bold b := (a_i b_j)_{i,j=1}^d$. If $\bold A, \bold B \in \mathbb R^{d \times d}$, then we set $A : B := \sum \limits_{i,j=1}^d A_{ij} B_{ij}$.

$\\$
\textbf{Function spaces:}
In the following, let $\Omega$ be a bounded domain with $C^2$-boundary. For $1 \leq p \leq \infty$ we denote by $L^p (\Omega)$ the usual Lebesgue-space equipped with its norm $\| \cdot \|_{L^p (\Omega)}$. The usual $L^p$-Sobolev space of order $k \in \mathbb N_0$ is denoted by $W^k_p (\Omega)$. For $p=2$ we denote $H^k(\Omega)=W^k_2(\Omega)$. If $\Omega=(0,T)$ the Banach-space valued variants are denoted by $L^p(0,T;X)$ and $W^k_p(0,T;X)$, respectively. Furthermore,
\begin{align*}
  L^p_{loc}([0,\infty);X)&= \{f\colon [0,\infty)\to X: f|_{(0,T)}\in L^p(0,T;X)\text{ for all }T>0 \},\\
 W^1_{p,loc}([0,\infty);X)&= \{f\in  L^p_{loc}([0,\infty);X): \tfrac{d}{dt} f\in L^p_{loc}([0,\infty;)X) \},
\end{align*}
and $C^\infty_{(0)}([0,\infty),X)$ denotes the space of smooth function $f\colon [0,\infty)\to X$ with a support that is compactly contained in $[0,\infty)$.

Moreover, we define $C^\infty_{0, \sigma} (\Omega) := \{ \bold u \in C^\infty_0 (\Omega)^d : \ \di\, \bold u  = 0\}$ and $L^2_\sigma (\Omega) := \overline{C^\infty_{0, \sigma} (\Omega)}^{\|\cdot\|_{L^2 (\Omega)}}$.
We set
\begin{align*}
V (\Omega) &:= H^2 (\Omega)^d \cap H^1_0 (\Omega)^d \cap L^2_\sigma (\Omega), \\
H^2_n (\Omega) &:= \left\{ f \in H^2 (\Omega) : \ \partial_n u_{| \partial \Omega } = 0 \text{ on } \Omega\right\}
\end{align*}
and for $m \in \mathbb R$ we define $$L^p_{(m)}  (\Omega) := \left\{ f \in L^p (\Omega) : \frac{1}{|\Omega|} \into f(x) \dx = m \right\}.$$ 
The operator $P_0$ is the orthogonal projection onto $L^2_{(0)} (\Omega)$ given by $P_0 f := f - \frac{1}{|\Omega|} \into f(x) \dx$ for all $f \in L^2 (\Omega)$.

For the proof of existence of weak solutions, we need the following compactness result.
\begin{theorem}\label{theorem_simon_5} 
Let $X \subseteq B \subseteq Y$ be Banach spaces with compact embedding
$X \hookrightarrow B$. Furthermore, let $1 \leq p \leq \infty$ and
assume that
\begin{enumerate}
\item $ F$ be bounded in $L^p (0, T; X)$, 
\item $\sup_{f\in F} \| \tau_h f - f \|_{L^p (0, T-h; Y)} \rightarrow 0$ as $ h \rightarrow 0$,
\end{enumerate}
where $(\tau_h f)(t) :=  f(t+h)$ for $h > 0$.
Then $F$ is relatively compact in $L^p (0,T;B)$ (and in $C([0,T];B)$ if $p = \infty$).
\end{theorem}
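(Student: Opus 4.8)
The plan is to deduce the statement from the classical Kolmogorov--Riesz--Fr\'echet compactness criterion in the Bochner space $L^p(0,T;B)$ (respectively from Arzel\`a--Ascoli when $p=\infty$), the only input specific to the present situation being an interpolation inequality that exploits the compact embedding $X\hookrightarrow B$.

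First I would establish Ehrling's lemma: for every $\eta>0$ there is a constant $C_\eta>0$ such that $\|v\|_B\le \eta\|v\|_X+C_\eta\|v\|_Y$ for all $v\in X$. This is proved by contradiction --- a suitably normalised sequence violating the inequality is bounded in $X$, hence has a subsequence converging in $B$, while its $Y$-norms tend to $0$, which forces the $B$-limit to vanish and contradicts the normalisation. I would also set $M:=\sup_{f\in F}\|f\|_{L^p(0,T;X)}<\infty$, which is finite by hypothesis~1.

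For $1\le p<\infty$ I would verify the two hypotheses of the vector-valued Kolmogorov--Riesz--Fr\'echet theorem for $F\subseteq L^p(0,T;B)$. Relative compactness in $B$ of the local time-averages is immediate: by H\"older's inequality $\big\|\int_{t_1}^{t_2}f(t)\,dt\big\|_X\le (t_2-t_1)^{1-1/p}M$ for all $f\in F$ and $0\le t_1<t_2\le T$, so these averages form a bounded, hence (since $X\hookrightarrow B$ is compact) relatively compact, subset of $B$. For the equi-smallness of translations I would apply Ehrling's inequality pointwise in $t$ and integrate, obtaining
\[
  \sup_{f\in F}\|\tau_h f-f\|_{L^p(0,T-h;B)}\le 2\eta M + C_\eta\,\sup_{f\in F}\|\tau_h f-f\|_{L^p(0,T-h;Y)};
\]
given $\varepsilon>0$ I would first fix $\eta$ with $2\eta M<\varepsilon/2$ and only afterwards invoke hypothesis~2 to make the remaining term $<\varepsilon/2$ for all small $h$. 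The criterion then yields relative compactness of $F$ in $L^p(0,T;B)$.

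For $p=\infty$ I would instead argue with Arzel\`a--Ascoli in $C([0,T];B)$. Hypothesis~2 forces each $f\in F$ to have a representative that is uniformly continuous from $[0,T]$ into $Y$ with an $f$-independent modulus of continuity; combining this with the $L^\infty(0,T;X)$-bound $M$ through Ehrling's inequality $\|f(t)-f(s)\|_B\le 2\eta M + C_\eta\|f(t)-f(s)\|_Y$ gives uniform equicontinuity of $F$ in $C([0,T];B)$. For fixed $t$, relative compactness of $\{f(t):f\in F\}$ in $B$ follows by comparing $f(t)$ with its Steklov average $\tfrac1h\int_t^{t+h}f(s)\,ds$ (reflected near $t=T$), which is bounded in $X$ uniformly in $f$ and $t$, the error in $B$ being again controlled via Ehrling's inequality and hypothesis~2. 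Arzel\`a--Ascoli then concludes. I expect the main obstacle to be purely bookkeeping: getting the order of quantifiers right in the translation estimate and, in the $p=\infty$ case, selecting the good time-representatives. Conceptually everything rests on the compact embedding $X\hookrightarrow B$, which is exactly what makes the trade-off in Ehrling's inequality --- a small multiple of the uniformly bounded $X$-norm of time increments against the uniformly small $Y$-norm --- possible.
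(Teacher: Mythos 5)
The paper gives no proof of this theorem; it is quoted directly as Theorem~5 of Simon's paper \cite{MR916688}. Your route --- establishing Ehrling's interpolation inequality from the compact embedding $X\hookrightarrow B$ (and the continuous embedding $B\hookrightarrow Y$), then verifying the hypotheses of the vector-valued Kolmogorov--Riesz--Fr\'echet criterion for $1\le p<\infty$ and of Arzel\`a--Ascoli for $p=\infty$, with the crucial point of fixing $\eta$ in Ehrling's inequality before sending $h\to 0$ in the translation estimate --- is precisely the standard argument and matches Simon's in substance; the sketch is sound.
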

$\\$
The proof of this theorem can be found in \cite[Theorem 5]{MR916688}.


\section{Existence of Weak Solutions for the Surfactant Model}\label{section_existence_of_a_weak_solution}
We approximate the system \eqref{equation_model_1}-\eqref{equation_model_3}, \eqref{equation_model_0'} by the following equations:
\begin{align}
\partial_t (\rho \ve) + \di ( \ve \otimes ( \rho \ve + \Je )) + \nabla p &- \di  ( 2 \eta (\varphi) D\ve  )  \nonumber \\
 -  \frac{R\ve}{2} + \delta  \Delta^2 \ve&=  \left(\mu -\frac{h(q)}\eps W'(\varphi)\right)\nabla \varphi &&  \text{in } Q_T , \label{equation_delta_model_0}  \\
\di \ \ve &= 0  && \text{in } Q_T , \label{equation_delta_model_nabla_v}  \\
(\partial_t+\ve\cdot \nabla) \left ( \frac{1}{\varepsilon} f (q) W (\varphi) + g (q) \right ) &= \di \left ( m( \varphi , q)  \nabla q \right )    && \text{in } Q_T ,  \label{equation_delta_model_1} \\
\partial_t \varphi +\ve \cdot \nabla \varphi & = \di ( \tilde m (\varphi) \nabla \mu ) && \text{in } Q_T  , \label{equation_delta_model_2} \\
 - \varepsilon \Delta \varphi + h(q)  \frac{1}{\varepsilon} W ' (\varphi) + \delta \partial_t \varphi &= \mu && \text{in } Q_T  .  \label{equation_delta_model_3} 
\end{align} 
Due to the term $\delta \Delta^2 \ve$ in (\ref{equation_delta_model_0}), we need the additional boundary condition
\begin{align}\label{boundary_condition_laplace_v}
\Delta \ve_{|\partial \Omega} = 0 && \text{ on } \partial \Omega \times (0,T) .
\end{align}
We define a weak solution for the approximating system \eqref{equation_delta_model_0}-\eqref{equation_delta_model_3} together with initial and boundary conditions \eqref{equation_model_inital_value_v}-\eqref{equation_model_boundary_condition} and \eqref{boundary_condition_laplace_v} as follows.
\begin{definition}[Weak solution in the case $\delta > 0$]\label{definition_weak_solution}~\\
Let $\delta > 0$ and $\bold v_0 \in L^2_\sigma (\Omega) $, $\varphi_0 \in H^2_n (\Omega) $,  $q_0 \in L^2 (\Omega) $ be given. We call $(\bold v, \varphi, \mu, q)$ with the properties
\begin{align*}
\bold v & \in  L^2 (0,\infty; V (\Omega) ) \cap L^\infty (0,\infty; L^2_\sigma (\Omega)),  && \varphi  \in L^\infty (0,\infty; H^1 (\Omega)) \cap W^1_{2,loc} ([0,\infty); L^2 (\Omega)) , \\
\mu & \in L^2_{loc} ([0,\infty); H^1 (\Omega)) , && q  \in L^2_{loc} ([0,\infty); H^1 (\Omega)) \cap L^\infty (0,\infty; L^2 (\Omega))
\end{align*}
a weak solution of  \eqref{equation_delta_model_0}-\eqref{equation_delta_model_3} together with the initial and boundary conditions \eqref{equation_model_inital_value_v}-\eqref{equation_model_boundary_condition} and \eqref{boundary_condition_laplace_v} if $\varphi|_{t=0}=\varphi_0$ and the following equations are satisfied:
\begin{align}\label{definition_weak_solution_time_dependent_case_0}
&  - \int \limits_0^\infty  \into \rho \bold v \cdot \partial_t  \boldsymbol \psi \dx \dt - \into \rho(\varphi_0) \ve_0\cdot \boldsymbol \psi |_{t=0}\dx - \int \limits_0^\infty \into ( \rho \bold v \otimes \bold v ) : \nabla \boldsymbol \psi \dx \dt\nonumber \\& +  \int \limits_0^\infty \into 2 \eta (\varphi) D \bold v : D  \boldsymbol \psi \dx \dt   
 - \int \limits_0^\infty \into ( \Je \otimes \bold v) : \nabla \boldsymbol \psi \dx \dt  - \left \langle \frac{\tilde R \bold v}{2} , \boldsymbol \psi \right \rangle  \nonumber \\
&+ \delta \int \limits_0^\infty \into \Delta \bold v \cdot \Delta \boldsymbol \psi \dx \dt  = \int \limits_0^\infty \into  \left ( \mu - \frac{h(q)}{\varepsilon} W' (\varphi) \right ) \nabla  \varphi \cdot \boldsymbol \psi \dx \dt 
\end{align}
for all $ \boldsymbol \psi \in C^\infty_{(0)} ([0,\infty); V (\Omega) )$, where $\Je$ is defined as in \eqref{eq:Je} and
\begin{align}\label{definition_Rv_2_psi}
 \left \langle \frac{\tilde R \bold v}{2} , \boldsymbol \psi \right \rangle := & \frac12 \int \limits_0^\infty \into \partial_t \rho (\varphi) \bold v \cdot \boldsymbol \psi \dx \dt   -  \frac{1}{2}   \int \limits_0^\infty \into \left (  \rho (\varphi) \bold v  + \Je \right ) \cdot \nabla ( \bold v \cdot \boldsymbol \psi ) \dx \dt .
\end{align}
Moreover,
\begin{align}
   \int \limits_0^\infty \into \left (\frac1\eps f(q) W(\varphi) + g(q) \right ) \partial_t \phi \dx \dt&+ \into \left (\frac1\eps f(q_0) W(\varphi_0) + g(q_0) \right ) \phi|_{t=0} \dx \dt  \nonumber \\
  +  \int \limits_0^\infty \into \left ( \frac{1}{\varepsilon} f(q)
W(\varphi) + g(q) \right ) \bold v \cdot \nabla \phi \dx \dt
\label{definition_weak_solution_time_dependent_case_1}&=\int
\limits_0^\infty \into m (\varphi, q) \nabla q \cdot \nabla \phi \dx \dt
, \\
- \int \limits_0^\infty \into \tilde m (\varphi) \nabla \mu \cdot \nabla \phi \dx \dt &= \int \limits_0^\infty \into\partial_t \varphi \phi \dx \dt +  \int \limits_0^\infty \into( \bold v\cdot  \nabla \varphi  ) \phi \dx \dt ,  \label{definition_weak_solution_time_dependent_case_2} \\
\int \limits_0^\infty \into \mu \phi \dx \dt = \int \limits_0^\infty \into \varepsilon \nabla \varphi \cdot \nabla \phi \dx \dt &+ \int \limits_0^\infty \into (\frac{1}{\varepsilon} h(q) W' (\varphi)+\delta \partial_t\varphi) \phi \dx \dt 
   \label{definition_weak_solution_time_dependent_case_3}
\end{align}
for all $\phi \in C^\infty_{(0)} ([0,\infty); C^1 (\overline \Omega))$ and the energy inequality 
\begin{align}\label{delta_energy_estimate}
 \int \limits_s^t \into &  ( m(\varphi, q)  |\nabla q|^2 + \tilde m (\varphi) |\nabla \mu|^2 + 2 \eta (\varphi) |D\bold v|^2 + \delta |\Delta \bold v|^2  + \delta |\partial_t \varphi|^2  ) \dx \mathit{d \tau}  \nonumber \\
& + E_{tot} (\bold v(t) , \varphi (t), q(t))  \leq E_{tot} ( \bold v(s), \varphi (s), q(s))
\end{align}
has to hold for all $ t \in [s, \infty)$ and almost all $s \in [0, \infty)$ including $s=0$. 
\end{definition}
\begin{remark}
Note the difference between \eqref{definition_R_delta_to_0} and \eqref{definition_Rv_2_psi}. If all appearing terms are smooth enough, both definitions are equivalent due to the modified continuity equation \eqref{identity_R_tilde_R}. We use \eqref{definition_Rv_2_psi} because this representation allows us to derive an energy inequality. But as the term $\partial_t \rho (\varphi)$ appears in \eqref{definition_Rv_2_psi}, we also need to estimate this term. Therefore, we insert the additional term $\delta \partial_t \varphi$ in equation \eqref{definition_weak_solution_time_dependent_case_3} and approximate the initial system. Hence, we first need to solve the approximating system and then pass to the limit $\delta \rightarrow 0$. But due to the additional terms $\delta \partial_t \varphi$ and $\delta \Delta \ve$ we are then able to use \eqref{definition_R_delta_to_0} instead of   \eqref{definition_Rv_2_psi} in the proof of Theorem \ref{main_theorem_existence_of_weak_solutions}.
\end{remark}

In order to prove our main result we use:
\begin{theorem}[Existence of weak solutions for $\delta > 0$]\label{theorem_existence_of_weak_solutions}
~\\
Let $\bold v_0 \in L^2_\sigma (\Omega)$, $\varphi_0 \in H^2_n (\Omega)$ and $q_0 \in L^2 (\Omega)$ be given. Then there exists a weak solution $(\bold v, \varphi, \mu, q)$ of  \eqref{equation_delta_model_0}-\eqref{equation_delta_model_3} in the sense of Definition~\ref{definition_weak_solution}. Moreover, $\varphi \in L^2_{loc} ([0, \infty); H^2 (\Omega))$.
\end{theorem}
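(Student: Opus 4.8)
The plan is to construct a solution of the regularized system \eqref{equation_delta_model_0}--\eqref{equation_delta_model_3} via an implicit (Rothe) time discretization. Fix $\delta>0$, a time step $N\in\mathbb N$, $h=T/N$, and construct iteratively functions $(\ve^k,\varphi^k,\mu^k,q^k)$, $k=1,\dots,N$, starting from suitable approximations of the initial data (in particular $q^0$ chosen so that $\tfrac1\eps f(q^0)W(\varphi_0)+g(q^0)=\tfrac1\eps f(q_0)W(\varphi_0)+g(q_0)$, which is solvable by the strict monotonicity of $q\mapsto \tfrac1\eps f(q)W(\varphi)+g(q)$ from Assumption~\ref{assumption12}). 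At each step one has to solve a stationary, strongly nonlinear coupled system: the difference quotient $(\rho(\varphi^{k})\ve^{k}-\rho(\varphi^{k-1})\ve^{k-1})/h$ against the Navier--Stokes/biharmonic part, the difference quotient of $\tfrac1\eps f(q)W(\varphi)+g(q)$ against $\di(m\nabla q)$, the difference quotient of $\varphi$ against $\di(\tilde m\nabla\mu)$, and the elliptic equation $-\eps\Delta\varphi+\tfrac1\eps h(q)W'(\varphi)+\delta(\varphi-\varphi^{k-1})/h=\mu$. I would solve this stationary problem by a Leray--Schauder fixed-point argument (or a Galerkin approximation in $\ve$ and $\mu$), using the coercivity coming from the $\delta\Delta^2\ve$ and $\delta\partial_t\varphi$ terms together with the structural bounds $c_1\le m,\tilde m,\eta\le c_2$ and $d(q)\ge c_1$; the growth conditions \eqref{growth_condition_W}, \eqref{growth_condition_h} guarantee the nonlinear terms are well-defined and continuous on the relevant Sobolev spaces in dimension $d\le 3$.

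The key point at the discrete level is a discrete energy estimate mirroring \eqref{enineq}/\eqref{delta_energy_estimate}. Testing the discrete momentum equation with $\ve^k$, the discrete surfactant equation with $q^k$, the $\varphi$-equation with $\mu^k$, and the $\mu$-equation with $(\varphi^k-\varphi^{k-1})/h$, then summing, the convective and $\Je$-terms cancel against the $\tilde R\ve/2$ term exactly as in the continuous computation — this is precisely why the representation \eqref{definition_Rv_2_psi} rather than \eqref{definition_R_delta_to_0} is used — and the relations \eqref{eq:Relations} are what make the surfactant contributions telescope into the free energy $\tfrac1\eps d(q)W(\varphi)+G(q)$. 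One uses convexity/concavity: $G$ strictly convex gives $G(q^k)-G(q^{k-1})\le G'(q^k)(q^k-q^{k-1})$, $W\ge 0$ and the identity structure of $d,f,h$ handle the interfacial free energy term, and the elementary identity $a\cdot(a-b)=\tfrac12|a|^2-\tfrac12|b|^2+\tfrac12|a-b|^2$ produces the dissipative $\delta|\Delta\ve|^2$, $\delta|\partial_t\varphi|^2$ contributions with the right signs. This yields, after summation over $k$, uniform bounds (independent of $N$): $\ve$ bounded in $L^\infty(L^2)\cap L^2(V(\Omega))$, $\varphi$ bounded in $L^\infty(H^1)$ with $\delta\partial_t\varphi$ bounded in $L^2(L^2)$, $q$ bounded in $L^\infty(L^2)\cap L^2(H^1)$ (the latter from $m|\nabla q|^2$ plus the coercivity $d(q)\ge c_1$ and the monotonicity of $G'$), and $\mu$ bounded in $L^2(H^1)$ after controlling its mean via the $\mu$-equation tested with constants and the growth of $h W'$.

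Next I would pass to the limit $N\to\infty$. Define piecewise-constant and piecewise-affine interpolants of the discrete solutions; the a priori bounds give weak(-$*$) limits. The essential obstacle — and the part the introductory remark flags — is strong compactness for $q$ and $\varphi$, because the surfactant equation only controls a time difference of the nonlinear quantity $\tfrac1\eps f(q)W(\varphi)+g(q)$, not of $q$ itself. Here I would estimate the discrete time increments of $\tfrac1\eps f(q)W(\varphi)+g(q)$ in a negative-order space (using that $\di(m\nabla q)$ is bounded in $L^2(H^{-1})$ and the convective term in $L^{4/3}(H^{-1})$ via $\ve\in L^2(L^6)$, $q\in L^\infty(L^2)$, $f$ bounded), obtaining boundedness of $\tfrac1\eps f(q)W(\varphi)+g(q)$ in $W^1_{4/3}(0,T;H^{-1})$; combined with an $L^2(H^1)$-type bound (from $d(q)\ge c_1$ and $g$ strongly monotone, which control $\|q\|_{H^1}$) Theorem~\ref{theorem_simon_5} (Simon's lemma, or the Aubin--Lions--Simon variant) gives $\tfrac1\eps f(q)W(\varphi)+g(q)\to$ its limit strongly in $L^2(L^2)$ and a.e.; for $\varphi$ one similarly uses $\partial_t\varphi$ bounded in $L^2(L^2)$ (thanks to $\delta>0$) plus $\varphi\in L^2(H^2)$ — which itself follows by testing \eqref{equation_delta_model_3} with $-\Delta\varphi$ using elliptic regularity and the $L^\infty(H^1)\hookrightarrow L^6$ control of $W'(\varphi)$ — so $\varphi\to\varphi$ strongly in $L^2(H^1)$ and a.e.. Strict monotonicity of $q\mapsto \tfrac1\eps f(q)W(\varphi)+g(q)$ together with a.e. convergence of $\varphi$ then forces $q_N\to q$ a.e., which upgrades all the nonlinear terms $m(\varphi,q)$, $f(q)$, $g(q)$, $h(q)W'(\varphi)$ to the correct weak limits. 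Strong $L^2$-convergence of $\ve$ follows from the standard Aubin--Lions argument applied to $\rho(\varphi)\ve$ using the momentum equation to bound $\partial_t(\rho\ve)$ in a dual space, plus $\ve\in L^2(V(\Omega))$. Finally, passing to the limit in each weak equation and in the discrete energy inequality (lower semicontinuity of the dissipation, continuity of $E_{tot}$ under the strong convergences) yields the weak solution in the sense of Definition~\ref{definition_weak_solution}, with $\varphi\in L^2_{loc}([0,\infty);H^2(\Omega))$ as claimed; the initial conditions are recovered from the time-continuity embeddings $W^1_{4/3}(0,T;H^{-1})\hookrightarrow C([0,T];H^{-1})$ for $\tfrac1\eps f(q)W(\varphi)+g(q)$ and analogously for $\rho\ve$ and $\varphi$.
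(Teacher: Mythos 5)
Your overall strategy — implicit time discretization, Leray--Schauder for the stationary problem with coercivity from the $\delta$-regularizers, a discrete energy estimate exploiting the thermodynamic relations \eqref{eq:Relations}, and then limit passage using compactness — is exactly the paper's route (Theorem~\ref{theorem_existence_of_weak_solutions} via Theorem~\ref{lemma_existence_solutions_time_discrete_problem}). The one place where you genuinely diverge, and where your sketch is too loose, is the compactness argument for $q$. You propose to apply Simon's theorem to the compound quantity $F(\varphi,q)=\tfrac1\eps f(q)W(\varphi)+g(q)$, using a $W^1_{4/3}(0,T;H^{-1})$ bound for time-equicontinuity and ``an $L^2(H^1)$-type bound'' for spatial compactness, and then to read off a.e.\ convergence of $q$ from strict monotonicity. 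The $L^2(H^1)$-type bound on $F$ does not hold: $\nabla F$ contains the term $f'(q)\nabla q\,W(\varphi)$, and with $\nabla q\in L^2(L^2)$ and $W(\varphi)\sim|\varphi|^3\in L^\infty(L^2)\cap L^{4/3}(L^\infty)$ (the latter using $\varphi\in L^\infty(H^1)\cap L^2(H^2)\hookrightarrow L^4(L^\infty)$, which is only available because $\delta>0$) the best one can extract is $\nabla F$ in spaces such as $L^{4/3}(L^{6/5})$, and $W^{1,6/5}(\Omega)\hookrightarrow L^2(\Omega)$ is the \emph{critical}, non-compact Sobolev embedding in $d=3$. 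Your route is repairable (apply Simon with $X=W^{1,6/5}$, $B=L^{3/2}$, $Y=H^{-1}$, deduce a.e.\ convergence of $F$, then of $q$ by strong monotonicity of $F(\varphi,\cdot)$ and a.e.\ convergence of $\varphi$, then upgrade to $L^2(Q_T)$ via the uniform $L^{10/3}(Q_T)$ bound on $q$), but you need to actually carry out this exponent bookkeeping; as written, the key bound is false.

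The paper avoids spatial regularity of $F$ entirely: it estimates time increments of $q$ \emph{directly}, using strong monotonicity of $F(\varphi(t),\cdot)$ to obtain $\|q^N(t+s)-q^N(t)\|^2_{L^2(\Omega)}\lesssim (F(\varphi^N(t),q^N(t+s))-F(\varphi^N(t),q^N(t)))(q^N(t+s)-q^N(t))$, then splitting the right-hand side into a term controlled by H\"older continuity of $\varphi^N$ in time (interpolation between $L^\infty(H^1)$ and $W^1_2(H^{-1})\hookrightarrow C^{1/2}(H^{-1})$) and a term which, via the time-discrete surfactant equation, is an integral of fluxes bounded by $C(T)s^{1/4}$; Simon's theorem is then applied with $X=H^1$, $B=Y=L^2$ to $q^N$ itself (see \eqref{estimate_compactness_q_plus_s_help}--\eqref{estimate_compactness_q_plus_lambda_help}). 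This is cleaner and sharper than the route through $F$. Two smaller remarks: in the discrete energy estimate the telescoping of the interfacial term $\tfrac1\eps d(q)W(\varphi)$ hinges not only on the relations \eqref{eq:Relations} but specifically on the \emph{concavity} of $h$ (via $f=-h'$ and $h(q_{k+1})-h(q_k)\le h'(q_k)(q_{k+1}-q_k)$), combined with $d=h+fq$ and the identity $H(\varphi_{k+1},\varphi_k)(\varphi_{k+1}-\varphi_k)=W(\varphi_{k+1})-W(\varphi_k)$; you should make this explicit rather than wave at ``the identity structure of $d,f,h$''. Also, the preprocessing of $q_0$ you describe is vacuous: the equation $\tfrac1\eps f(q^0)W(\varphi_0)+g(q^0)=\tfrac1\eps f(q_0)W(\varphi_0)+g(q_0)$ is solved by $q^0=q_0$, and the paper simply iterates from the given data.
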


This theorem is proven in Section~\ref{Appendix} with the aid of an
approximation by a time-discrete problem for a time-step  size $h>0$ and
passing to the limit $h \rightarrow 0$. The arguments are a non-trivial
generalization to the case with surfactants of   the proof of the corresponding result in \cite{MR3084319}.

Theorem~\ref{theorem_existence_of_weak_solutions} yields the existence of weak solutions $(\ve^\delta, \varphi^\delta, \mu^\delta, q^\delta)$ of the approximating system \eqref{equation_delta_model_0}-\eqref{equation_delta_model_3} in the sense of Definition~\ref{definition_weak_solution}.
It remains to prove the existence of weak solutions for the system \eqref{equation_model_0}-\eqref{equation_model_3}.
To this end, we have to pass to the limit $\delta \rightarrow 0$.
We can assume w.l.o.g. $\into \varphi \dx = 0$.
By changing $\varphi$ by a constant and shifting $W$, we can always reduce to this case.
Moreover, we will assume $\eps=1$ for simplicity in the following. Furthermore, we split the equation
\begin{align}
\delta \partial_t \varphi^\delta - \Delta \varphi^\delta &= h(q^\delta) W' (\varphi^\delta) + \mu^\delta && \text{ in } (0,T) \times \Omega \label{equation_unsplitted_phi_delta_0}, \\
\partial_n \varphi^\delta_{|\partial \Omega} &= 0  && \text{ on } (0,T) \times \partial \Omega , \\
\varphi^\delta_{|t=0 } &= \varphi_0 && \text{ in } \Omega \label{equation_unsplitted_phi_delta_2} 
\end{align}
by considering the system
\begin{align}
\delta \partial_t \varphi^\delta_1 - \Delta \varphi^\delta_1 &= P_0 (h(q^\delta) W' (\varphi^\delta) + \Delta \varphi_0 ) && \text{ in } (0,T) \times \Omega, \label{equation_splitted_phi_phi_1}\\
\delta \partial_t \varphi^\delta_2 - \Delta \varphi^\delta_2 &= P_0 (\mu^\delta )  && \text{ in } (0,T) \times \Omega,  \\
 \partial_n \varphi^\delta_{1|\partial \Omega} =  \partial_n \varphi^\delta_{2|\partial \Omega} &= 0 && \text{ on } (0,T) \times \partial \Omega, \\
\varphi^\delta_{1|t=0} = \varphi^\delta_{2|t=0} &= 0 && \text{ in } \Omega, \\
\into \varphi^\delta_1 \dx = \into \varphi^\delta_2 \dx &= 0 .   \label{equation_splitted_phi_mean_value_phi_1_phi_2}
\end{align}
Note that, if $\varphi^\delta_1$ and $\varphi^\delta_2$ are solutions of \eqref{equation_splitted_phi_phi_1}-\eqref{equation_splitted_phi_mean_value_phi_1_phi_2}, then $\varphi^\delta = \varphi^\delta_1 + \varphi^\delta_2 + \varphi_0$ is a solution of \eqref{equation_unsplitted_phi_delta_0}-\eqref{equation_unsplitted_phi_delta_2} since $P_0(h(q^\delta) W' (\varphi^\delta) + \mu^\delta)=h(q^\delta) W' (\varphi^\delta) + \mu^\delta$ because of $\int_\Omega \partial_t \varphi dx=0$ due to \eqref{definition_weak_solution_time_dependent_case_2}.

In order to get uniform bounds for the solutions of \eqref{equation_splitted_phi_phi_1}-\eqref{equation_splitted_phi_mean_value_phi_1_phi_2} for $\varphi^\delta_1 $ and $\varphi^\delta_2$, we use the following lemma.
\begin{lemma}\label{lemma_phi_solves_equation_independent_of_delta}
Let $\delta > 0$ and $f \in L^p (0,T; L^q_{(0)} (\Omega))$ be given for $1 < p < \infty$,  $2 \leq q < \infty$ and $0 < T \leq \infty$. Then there exists a solution $\varphi \in L^p (0,T; W^2_q (\Omega))$ of
\begin{align*}
\delta \partial_t \varphi - \Delta \varphi &= f  && \text{ in } (0,T) \times \Omega , \\
\partial_n \varphi_{| \partial \Omega} &= 0 && \text{ on } (0,T) \times \partial \Omega, \\
\varphi_{|t=0} &= 0 && \text{ in } \Omega,
\end{align*}
which can be estimated by $\|\varphi\|_{L^p (0,T; W^2_q (\Omega))} \leq C \|f\|_{L^p (0,T; L^q (\Omega))}$ for a constant $C > 0$ independent of $\delta > 0$.
\end{lemma}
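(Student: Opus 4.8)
The plan is to establish the estimate by a parabolic maximal regularity argument after rescaling time to remove the $\delta$-dependence. The key observation is that the substitution $s = t/\delta$ turns the equation $\delta\partial_t\varphi - \Delta\varphi = f$ into $\partial_s\tilde\varphi - \Delta\tilde\varphi = \tilde f$, where $\tilde\varphi(s) = \varphi(\delta s)$ and $\tilde f(s) = f(\delta s)$, now posed on the time interval $(0,T/\delta)$ with the same homogeneous Neumann boundary condition and zero initial value. Since the heat semigroup on $L^q_{(0)}(\Omega)$ generated by the Neumann Laplacian (restricted to mean-zero functions) is exponentially stable — its generator has spectrum bounded away from zero by the first nonzero Neumann eigenvalue — the operator $-\Delta$ on $L^q_{(0)}(\Omega)$ enjoys maximal $L^p$-regularity on the half-line $(0,\infty)$ with a constant that is uniform in the length of the time interval. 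Here one uses the $C^2$-regularity of $\partial\Omega$ together with classical elliptic $L^q$-theory (Agmon–Douglis–Nirenberg) so that the domain of the realization is $W^2_q\cap\{\partial_n u|_{\partial\Omega}=0\}\cap L^q_{(0)}$.

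Concretely, I would first record that for $f\in L^p(0,\infty;L^q_{(0)}(\Omega))$ the inhomogeneous Cauchy problem $\partial_s\tilde\varphi - \Delta_N\tilde\varphi = \tilde f$, $\tilde\varphi(0)=0$, on $(0,\infty)$ has a unique solution $\tilde\varphi\in L^p(0,\infty;W^2_q(\Omega))\cap W^1_p(0,\infty;L^q(\Omega))$ with $\|\partial_s\tilde\varphi\|_{L^p(0,\infty;L^q)} + \|\tilde\varphi\|_{L^p(0,\infty;W^2_q)} \leq C\|\tilde f\|_{L^p(0,\infty;L^q)}$, where $C$ depends only on $p$, $q$, $\Omega$. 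This is exactly the statement that $-\Delta_N$ on $L^q_{(0)}(\Omega)$ has maximal $L^p$-regularity on $(0,\infty)$; it follows from the boundedness of the $H^\infty$-calculus of the Neumann Laplacian on $L^q$, together with the spectral shift coming from the Poincaré–Wirtinger inequality that makes the mean-zero part invertible, so that no exponential weight is needed and the half-line estimate holds. For finite $T$ one simply restricts, extending $f$ by zero beyond $T$ if convenient, so the constant is independent of $T\in(0,\infty]$ as well.

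The final step is to undo the rescaling. Writing $\varphi(t) = \tilde\varphi(t/\delta)$, one has $\|\varphi\|_{L^p(0,T;W^2_q(\Omega))}^p = \delta\,\|\tilde\varphi\|_{L^p(0,T/\delta;W^2_q(\Omega))}^p$ and likewise $\|\tilde f\|_{L^p(0,T/\delta;L^q)}^p = \delta^{-1}\|f\|_{L^p(0,T;L^q)}^p$. Hence
\begin{align*}
\|\varphi\|_{L^p(0,T;W^2_q(\Omega))}^p = \delta\,\|\tilde\varphi\|_{L^p(0,T/\delta;W^2_q)}^p \leq \delta\, C^p\,\|\tilde f\|_{L^p(0,T/\delta;L^q)}^p = C^p\,\|f\|_{L^p(0,T;L^q)}^p,
\end{align*}
which is precisely the claimed $\delta$-independent bound. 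Note also that the $W^1_p$-in-time bound scales with a factor $\delta$ and so degenerates as $\delta\to 0$, which is consistent with the fact that only the spatial estimate can survive the limit; this is exactly why the lemma is phrased only in terms of the $L^p(0,T;W^2_q)$-norm.

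The main obstacle is the uniformity of the constant in $T$ (equivalently, in $T/\delta$, which may be arbitrarily large as $\delta\to 0$). An estimate on a fixed finite interval with a $T$-dependent constant would be useless here. This is handled precisely by passing through the mean-zero subspace $L^q_{(0)}(\Omega)$: removing the constant functions eliminates the zero eigenvalue of the Neumann Laplacian, and the resulting exponential decay of the semigroup yields maximal regularity on the whole half-line $(0,\infty)$ with a single constant. This is where the hypotheses $f\in L^q_{(0)}$ and $\int_\Omega\varphi\,dx=0$ are essential, and also where the $C^2$-boundary assumption enters, to guarantee the $W^2_q$-elliptic regularity up to the boundary.
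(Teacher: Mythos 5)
Your proof is correct and follows essentially the same route as the paper: rescale time by $\delta$ to reduce to the standard heat equation, invoke maximal $L^p$-regularity on the half-line for the Neumann Laplacian restricted to $L^q_{(0)}(\Omega)$ (where the spectral gap coming from the mean-zero constraint makes the semigroup exponentially stable, so the constant is uniform in the interval length), and undo the scaling. The paper cites Denk–Hieber–Prüss for finite-interval maximal regularity together with Dore for the passage to the half-line via negative exponential type, rather than the bounded $H^\infty$-calculus, but this is the same mechanism.
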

\begin{proof}
For the extension of $f$ on $(T, \infty)$ given by
\begin{align*}
\tilde f (t) := \begin{cases} f(t) & \text{ if } t \in (0,T) , \\ 0 & \text{ else}, \end{cases}
\end{align*}
 it holds $\tilde f \in L^p (0, \infty; L^q_{(0)} (\Omega))$. We consider the problem
\begin{align*}
\delta \partial_t \tilde \varphi - \Delta \tilde \varphi &= \tilde f && \text{ in } (0,\infty) \times \Omega , \\
\partial_n \tilde \varphi_{|\partial \Omega} &= 0 && \text{ on } (0,\infty) \times \partial \Omega ,  \\
\tilde \varphi_{|t=0} &= 0 && \text{ in } \Omega .
\end{align*}
Then we define $\psi_\delta (t) := \tilde \varphi (\delta t)$ and $\tilde f_\delta (t) := \tilde f (\delta t)$ and rewrite the system to
\begin{align*}
\partial_t \psi_\delta - \Delta  \psi_\delta &= \tilde f_\delta && \text{ in } (0,\infty) \times \Omega , \\
\partial_n \psi_{\delta|\partial \Omega} &= 0 && \text{ on } (0,\infty) \times \partial \Omega ,  \\
\psi_{\delta|t=0} &= 0 && \text{ in } \Omega .
\end{align*}
From \cite[Theorem 8.2]{MR2006641} it follows that for every $0 < T < \infty$ there exists a unique solution $\psi_\delta \in L^p (0,T; W^2_q (\Omega))$ and \cite[Theorem 2.4]{MR1225809} yields  $\psi_\delta \in L^p (0, \infty; W^2_q (\Omega))$ together with the estimate
\begin{align}\label{estimate_max_reg_help_problem_psi_delta_tilde_f_delta}
\|\psi_\delta\|_{L^p (0, \infty; W^2_q (\Omega))} \leq C \|\tilde f _\delta\|_{L^p (0, \infty; L^q (\Omega))} 
\end{align}
for a constant $C > 0$ independent of $\delta$. Here we used that $\sigma ( \Delta_{N} ) \subseteq (- \infty, 0)$  implies that $\Delta_N$ has negative exponential type, cf. \cite[Theorem 12.33]{MR2028503}, where 
\begin{align*}
\Delta_N : \mathcal D (\Delta_N) :=  W^2_{q,N} (\Omega) \cap L^q_{(0)} (\Omega)\subseteq L^q_{(0)} (\Omega) \rightarrow L^q _{(0)} (\Omega)
\end{align*}
is the Neumann-Laplace operator on $L^q_{(0)}(\Omega)$.
The result  (\ref{estimate_max_reg_help_problem_psi_delta_tilde_f_delta}) yields
\begin{align*}
\|\tilde \varphi\|_{L^p (0, \infty; W^2_q (\Omega))} \leq C \|\tilde f\|_{L^p (0, \infty; L^q (\Omega))}
\end{align*}
and therefore we obtain
\begin{align*}
\|\varphi\|_{L^p (0, T; W^2_q (\Omega))} \leq C \|f\|_{L^p (0, T; L^q (\Omega))}
\end{align*}
for a constant $C >0$ independent of $\delta$.
\end{proof}

\begin{proof}[Proof of Theorem \ref{main_theorem_existence_of_weak_solutions}]~ \\
Theorem \ref{theorem_existence_of_weak_solutions} yields the existence of weak solutions $(\ve^\delta, \varphi^\delta, \mu^\delta, q^\delta)$ in the sense of Definition \ref{definition_weak_solution} for every $\delta > 0$, where $\varphi^\delta \in L^2_{loc} ([0,\infty); H^2 (\Omega))$. From the energy inequality (\ref{delta_energy_estimate}) we get the following bounds:
\begin{enumerate} 
\item $ \qquad (\ve^\delta)_{\delta >0} \text{ is bounded in } L^\infty (0, \infty; L^2_\sigma (\Omega)) \cap  L^2 (0, \infty; H^1_0 (\Omega)^d) $,
\item $ \qquad (\nabla q^\delta)_{\delta >0} \text{ and }  (\nabla \mu^\delta)_{\delta >0} \text{ are bounded in } L^2 (0, \infty; L^2 (\Omega)^d)$,
\item $ \qquad (\nabla \varphi^\delta)_{\delta >0} \text{ is bounded in } L^{\infty} (0, \infty; L^2 (\Omega)^d) $,
\item $ \qquad (W(\varphi^\delta))_{\delta >0} \text{ and }(G(q^\delta))_{\delta >0} \text{ are bounded in } L^{\infty} (0,\infty ; L^1 (\Omega)) $. 
\end{enumerate}
Since $\tilde R$ and $R$ also depend on $\delta $, we write $\tilde
R^\delta $ and $R ^\delta$ instead. Choosing $\phi= \frac{\ve}2 \cdot
\boldsymbol \psi \frac{\partial\rho}{\partial \varphi}(\varphi)$ in \eqref{definition_weak_solution_time_dependent_case_2} one can show
\begin{align*}
 \left \langle \frac{R^\delta \ve^\delta}{2} , \boldsymbol \psi \right \rangle &=   \left \langle \frac{ \tilde R^\delta \ve^\delta}{2} ,  \boldsymbol \psi \right \rangle\qquad \text{ for all }\boldsymbol \psi \in C^\infty_{(0)} ([0,\infty); C^\infty_{0, \sigma} (\Omega))
\end{align*}
 in a straightforward manner.
Thus we use (\ref{definition_R_delta_to_0}) instead of (\ref{definition_Rv_2_psi}) from now on. 
From (\ref{definition_weak_solution_time_dependent_case_1}) it follows that the mean value of $\varphi^\delta (t) $ is constant and from (\ref{definition_weak_solution_time_dependent_case_2}) we obtain that the mean value of $\mu^\delta (t)$ is bounded in $L^\infty (0, \infty)$. Consequently, there exist subsequences such that
\begin{enumerate}
\item $ \qquad \ve^\delta \rightharpoonup \ve $ in $L^2 (0, \infty ; H^1_0 (\Omega)^d )$,
\item $ \qquad \ve^\delta \rightharpoonup^* \ve $ in $ L^\infty (0, \infty; L^2_\sigma (\Omega) ) \cong ( L^1 (0, \infty; L^2_\sigma (\Omega) ))'$,
\item $ \qquad q^\delta \rightharpoonup q$ in $L^2 (0, T; H^1 (\Omega))$ for all $0<T<\infty$, 
\item $ \qquad q^\delta \rightharpoonup^* q$ in $L^\infty (0, \infty; L^2 (\Omega)) \cong ( L^1 (0,\infty; L^2 (\Omega)))'$,
\item $ \qquad \varphi^\delta \rightharpoonup^* \varphi$ in $L^{\infty} (0, \infty; H^1 (\Omega)) \cong (L^1 (0,\infty ; H^1 (\Omega)))'$,
\item $ \qquad \mu^\delta \rightharpoonup \mu$ in $L^2 (0, T; H^1 (\Omega))$ for all $0<T<\infty$. 
\end{enumerate}
 Using the growth conditions \eqref{growth_condition_W} for $W'$ and (\ref{growth_condition_h}) for $h$, we get the boundedness of $h(q^\delta) \in L^2 (0,T; L^6 (\Omega))$ and $W' (\varphi^\delta ) \in L^\infty (0,T; L^3 (\Omega))$ for every $T\in (0,\infty)$.
 Applying elliptic regularity theory to
\begin{align*}
\varepsilon \Delta \varphi^\delta = \frac{1}{\varepsilon} h(q^\delta) W ' (\varphi^\delta) - \mu^\delta + \delta \partial_t \varphi^\delta =: f_1 ^\delta
\end{align*}
cf.\ (\ref{equation_delta_model_3}), we obtain the boundedness of $(\varphi^\delta)_{\delta > 0}$ in $L^2 (0,T; H^2 (\Omega))$ since the right-hand side $f_1^\delta $ is bounded in $L^2 (Q_T)$ for every $T\in(0,\infty)$. Here we used the energy inequality (\ref{delta_energy_estimate}) to estimate the boundedness of $(\sqrt \delta \partial_t \varphi^\delta)_{\delta > 0}$ in $L^2 (Q_T)$.

Due to (\ref{equation_delta_model_2}) we deduce that $(\partial_t \varphi^\delta)_{\delta > 0}$ is bounded in $L^2 (0, \infty; H^{-1} (\Omega))$. More precisely, we use that $\ve^\delta \cdot \nabla \varphi^\delta$ is bounded in $ L^2 (0,\infty; L^\frac{3}{2} (\Omega))$ because of the boundedness of $ \ve^\delta$ in $L^\infty (0,\infty; L^2 (\Omega)$ and $\nabla \varphi^\delta $ in $L^2 (0,T; L^6 (\Omega))$ for every $0<T<\infty$.
Hence the Lemma of Aubin-Lions 
yields
\begin{align*}
\varphi^\delta \rightarrow \varphi \qquad \text{ in } L^p (0,T; L^2 (\Omega))
\end{align*}
for every $1 \leq p < \infty$ and $0<T<\infty$. 

For the proof of precompactness of $(q^\delta)_{\delta > 0}$ in $L^2 (0,T; L^2 (\Omega))$ we show that $(q^\delta )_{ \delta > 0}$ fulfills the assumptions of Theorem \ref{theorem_simon_5} with $ X = H^1 (\Omega)$, $B = L^2 (\Omega)$ and $Y = L^2 (\Omega)$.
First, we note that $(q^\delta)_{\delta > 0}$ is bounded in $L^2 (0,T; H^1 (\Omega))$. 
To prove condition ii) we show the estimate
\begin{align}\label{delta_estimate_compactness_q_plus_lambda_help}
\left ( \int \limits_0^{T-\lambda} \|q^\delta (t+\lambda) - q^\delta (t)\|^2_{L^2 (\Omega)} \dt \right ) ^{\frac{1}{2}} \leq C \lambda^\frac{1}{4} 
\end{align}
for any $\lambda \in (0,T)$ and a constant $ C > 0$ independent of $\lambda$ and $\delta$.
We define
\begin{align*}
 F^\delta (\varphi^\delta, q^\delta) := \frac{1}{\varepsilon} f(q^\delta) W(\varphi^\delta) + g(q^\delta) , 
&&  f^\delta (t) :=  F^\delta (\varphi^\delta (t), q^\delta (t)) .
\end{align*}
Then $ F^\delta (\varphi^\delta, \cdot )$ is strongly monotone since $g$ is strongly monotone and $f$ is monotone. Thus there exists a constant $C > 0$ such that
\begin{align*}
\left (  F^\delta (\varphi^\delta (t) , q^\delta (t + \lambda ) ) -  F^\delta (\varphi^\delta (t) , q^\delta (t) ) \right ) ( q^\delta (t + \lambda) - q^\delta (t) ) \geq C \left |  q^\delta (t+ \lambda) - q^\delta (t) \right |^2
\end{align*}
for every $t \in (0,T)$.
Integrating this inequality over the domain $\Omega\times (0,T-\lambda)$ yields
\begin{align}\label{delta_inequality_qNplus_s_qN_estimate}
 &C   \int \limits_0^{T-\lambda} \into  |q^\delta (t+  \lambda) - q^\delta (t)|^2  \dx \dt  \leq  \int \limits_0^{T - \lambda} \into  \left (  F^\delta (\varphi^\delta   , q^\delta _{\lambda + } ) -  F^\delta (\varphi^\delta _{\lambda + } , q^\delta_{\lambda +}) \right ) (q^\delta _{\lambda + } - q^\delta  )  \dx \dt \nonumber \\
&\qquad\qquad  +  \int \limits_0^{T - \lambda} \into  \left (  F^\delta (\varphi^\delta _{\lambda + }  , q^\delta_{\lambda + } ) -  F^\delta (\varphi^\delta  , q^\delta ) \right ) (q^\delta _{\lambda + } - q^\delta  ) \dx \dt  ,
\end{align}
where we introduce the notation $f_{\lambda +} (t) := f(t + \lambda)$.
The first term in  (\ref{delta_inequality_qNplus_s_qN_estimate}) can be estimated by
\begin{align*}
 &  \int \limits_0^{T - \lambda} \into  \left (  F^\delta (\varphi^\delta   , q^\delta _{\lambda + } ) -  F^\delta (\varphi^\delta _{\lambda + } , q^\delta_{\lambda +}) \right ) (q^\delta _{\lambda + } - q^\delta  ) \dx \dt
\\ &\ \ \ \leq C \int \limits_0^{T- \lambda} \into \left | W (\varphi^\delta ) - W (\varphi^\delta _{\lambda +}) \right | \ \left | q^\delta _{\lambda +} - q^\delta \right | \dx \dt .\\
& \ \ \  \leq C \int \limits_0^{T- \lambda} \into   \left | \varphi^\delta _{\lambda +} - \varphi^\delta  \right |  \left ( |\varphi^\delta |^2 + |\varphi^\delta _{\lambda +}|^2 +1 \right )  \  \left | q^\delta _{\lambda +} - q^\delta \right | \dx \dt  \leq C (T) \lambda^{\frac{1}{4}} .
\end{align*}
Here we used the growth condition \eqref{growth_condition_W} for $W'$ and the boundedness of $f$. Moreover, we used the boundedness of $\varphi^\delta \in L^\infty (0, T; L^6 (\Omega))$, $q^\delta \in  L^2 (0,T; L^6 (\Omega))$ and 
\begin{align}\label{delta_inequality_varphi_N_difference_l2}
\underset{0 \leq t \leq T- \lambda }{\sup} \|\varphi^\delta (t + \lambda) - \varphi^\delta (t)\|_{L^2 (\Omega)} \leq C(T) \lambda^{\frac{1}{4}} \quad \text{for all }\lambda\in (0,T]
\end{align}
for a constant $C(T) > 0$ depending on $T>0$. The latter inequality is proved as follows: Since $(\varphi^\delta)_{\delta > 0} $ is bounded in $L^\infty (0 , T; H^1_{(0)} (\Omega)) \cap W^1_2 (0, T ; H^{-1 }_{(0)} (\Omega))$ and 
$$
W^1_2 (0,T; H^{-1}_{(0)} (\Omega)) \hookrightarrow C^\frac{1}{2} ([0,T]; H^{-1}_{(0)} (\Omega)),
$$ 
we have
\begin{align*}
\underset{t \in [0, T-  \lambda )}{\sup} \| \varphi^\delta ( t +  \lambda) -  \varphi^\delta ( t )\|_{H^{-1} (\Omega)} \leq C \lambda ^{\frac{1}{2}}\qquad \text{for all }\lambda\in (0,T].
\end{align*}
Now (\ref{delta_inequality_varphi_N_difference_l2}) follows from $\|f\|_{L^2 (\Omega)} \leq \|f\|_{H^1_{(0)} (\Omega)}^\frac{1}{2}  \|f\|_{H^{-1}_{(0)} (\Omega)}^\frac{1}{2}$ for all $f \in H^1_{(0)} (\Omega)$.
To estimate the second term in (\ref{delta_inequality_qNplus_s_qN_estimate}), we use equation (\ref{equation_delta_model_1}). Then we obtain
\begin{align*}
& \int \limits_0^{T - \lambda} \into  \left (  F^\delta (\varphi^\delta _{\lambda + }  , q^\delta_{\lambda + } ) -  F^\delta (\varphi^\delta  , q^\delta ) \right ) (q^\delta _{\lambda + } - q^\delta  ) \dx \dt = \int \limits_0^{T - \lambda} \into  \left (  f^\delta _{\lambda + } - f^\delta \right ) (q^\delta _{\lambda + } - q^\delta  ) \dx \dt  \\
& = \int \limits_0^{ T - \lambda} \into \int \limits_t^{t + \lambda} \partial_\tau f^\delta (\tau) \mathit{d \tau} (q^\delta_{\lambda +} - q^\delta ) \dx \dt .
\end{align*}
Using (\ref{equation_delta_model_1}) yields
\begin{align*}
&  \int \limits_0^{T - \lambda} \into  \left (  F^\delta (\varphi^\delta _{\lambda + }  , q^\delta_{\lambda + } ) -  F^\delta (\varphi^\delta  , q^\delta ) \right ) (q^\delta _{\lambda + } - q^\delta  ) \dx \dt  \\
& \leq  \int \limits_0^{T-\lambda} \into \int \limits_t^{t + \lambda}  \left  | m (\varphi^\delta (\tau), q^\delta (\tau) ) \nabla q^\delta (\tau )  \right | \mathit{d \tau} |\nabla q^\delta_{\lambda +} - \nabla q^\delta| \dx \dt \\
& \ \ \  \ +   \int \limits_0^{T-\lambda} \into   \int \limits_t^{t + \lambda}  \left | g (q^\delta (\tau ) \ve^\delta (\tau ) \right | \mathit{ d \tau}   |\nabla q^\delta_{\lambda +} - \nabla q^\delta| \dx \dt \\
& \ \ \ \ + \int \limits_0^{T- \lambda } \into  \int \limits_t^{t + \lambda}  \left | \frac{1}{\varepsilon} f (q^\delta (\tau)) W (\varphi^\delta (\tau )) \ve^\delta (\tau )\right | \mathit{d \tau}  |\nabla q^\delta_{\lambda +} - \nabla q^N| \dx \dt .
\end{align*}
The first term can be estimated by
\begin{align*}
& \int \limits_0^{T-\lambda} \into \int \limits_t^{t + \lambda}  \left  | m (\varphi^\delta (\tau), q^\delta (\tau) ) \nabla q^\delta (\tau )  \right | \mathit{d \tau} |\nabla q^\delta_{\lambda +} - \nabla q^\delta| \dx \dt \\
& \leq C \int \limits_0^{T- \lambda} \int \limits_t ^{t + \lambda}  \|\nabla q^\delta (\tau)\|_{L^2 (\Omega)}  \mathit{d \tau}  \|\nabla q^\delta_{\lambda +} - \nabla q^\delta \|_{L^2 (\Omega)} \dt \\
& \leq C \int \limits_0^{T- \lambda } \lambda^\frac{1}{2} \|q^\delta\|_{L^2 (0,T; H^1 (\Omega))}   \|\nabla q^\delta_{\lambda +} - \nabla q^\delta \|_{L^2 (\Omega)} \dt   \leq C (T) \lambda^\frac{1}{2} .
\end{align*}
For the third term we obtain
\begin{align*}
& \int \limits_0^{T-\lambda} \into  \int \limits_t^{t + \lambda}  \left | \frac{1}{\varepsilon} f (q^\delta (\tau )) W (\varphi^\delta (\tau )) \ve^\delta (\tau  )\right | \mathit{d \tau} \left | \nabla q^\delta_{\lambda +} - \nabla q^\delta \right | \dx \dt  \\
& \leq C \int \limits_0^{T- \lambda}   \int \limits_t^{ t + \lambda}  \|(|\varphi^\delta (\tau)|^3 + 1)\|_{L^3 (\Omega)}  \|\ve^\delta (\tau )\|_{L^6 (\Omega)} \mathit{d \tau} \|\nabla q^\delta_{\lambda +} - \nabla q^\delta\|_{L^2 (\Omega)}  \dt  \\
& \leq C \int \limits_0^{T-  \lambda} \lambda^\frac{1}{4} ( \|\varphi^\delta\|_{L^{12} (0,T; L^{9} (\Omega))}^3 + 1 ) \|\ve^\delta\|_{L^2 (0,T; L^6 (\Omega))} \|\nabla q^\delta_{\lambda +} - \nabla q^\delta \|_{L^2 (\Omega)} \dt \leq C (T) \lambda^\frac{1}{4} .
\end{align*}
Here we used the embeddings, see Theorem 2.32 in
\cite{Dissertation_Weber},
\begin{align*}
L^2 (0,T; H^2 (\Omega)) \cap L^\infty (0,T; L^6 (\Omega)) \hookrightarrow L^4 (0,T; L^\infty (\Omega))  , \\
L^4 (0,T; L^\infty (\Omega)) \cap L^\infty (0,T; L^6 (\Omega)) \hookrightarrow L^{12} (0,T; L^9 (\Omega)) .
\end{align*}
The second term can analogously be estimated by
\begin{align*}
& \int \limits_0^{T-\lambda} \into   \int \limits_t^{t + \lambda}  \left | g (q^\delta (\tau ) \ve^\delta (\tau) \right | \mathit{ d \tau}   \left | \nabla q^\delta_{\lambda +} - \nabla q^\delta \right | \dx \dt  \\
& \leq  \int \limits_0^{T-\lambda}  \int \limits_t^{t + \lambda} \|q^\delta (\tau) \|_{L^6 (\Omega)} \|\ve^\delta (\tau)\|_{L^3 (\Omega)} \mathit{ d \tau}  \|\nabla q^\delta_{\lambda +} - \nabla q^\delta \|_{L^2 (\Omega) } \dt  \leq C (T) \lambda^\frac{1}{4} .
\end{align*}
$\\$
Using these estimates in (\ref{delta_inequality_qNplus_s_qN_estimate}) yields that there exists a constant $C (T) > 0$ such that
 (\ref{delta_estimate_compactness_q_plus_lambda_help}) holds for every $\lambda \in (0,T)$ and therefore $(q^\delta)_{\delta > 0}$ is relatively compact in $L^2 (0,T; L^2 (\Omega))$.
Thus there exists a subsequence such that
\begin{align*}
q^\delta \rightarrow  q \qquad \text{ in } L^2 (0, T ; L^2 (\Omega)) \text{ for all }0<T<\infty 
\end{align*}
and 
\begin{align*}
q^\delta (t,x) \rightarrow q (t,x)  \qquad \text{ a.e. in } (0 , \infty) \times \Omega .
\end{align*}

Now we can show with similar arguments as in \cite{MR3084319} that it holds $\ve^\delta \rightarrow \ve$ in $L^2 (0,T; L^2 (\Omega)^d)$ as $\delta \rightarrow 0$.
To this end, we can use that $\partial_t ( \mathbb P_\sigma (\rho^\delta \ve^\delta))$ is bounded in $L^1 (0,T; H^{-2} (\Omega)^d)$ for every $0<T<\infty$ since
\begin{enumerate}
\item[i)] $\qquad \rho^\delta \ve^\delta \otimes \ve^\delta  \text{ is bounded in } L^2 (0,T; L^{\frac{3}{2}}(\Omega)^{d \times d}) $, 
\item[ii)] $\qquad \ve^\delta \otimes \Je^\delta  \text{ is bounded in } L^\frac{4}{3} (0,T; L^\frac{6}{5} (\Omega))$, 
\item[iii)] $\qquad \mu^\delta \nabla \varphi^\delta  \text{ is bounded in }  L^2 (0,T; L^{\frac{3}{2}}(\Omega)^d) $, 
\item[iv)] $\qquad \frac{h(q^\delta)}{\varepsilon} W' (\varphi^\delta) \nabla \varphi^\delta  \text{ is bounded in } L^{\frac{4}{3}} (0,T; L^{\frac{6}{5}} (\Omega)^d)$. 
\item[v)] If $\rho (\varphi) \not \equiv const.$, 
\begin{align}\label{prove_delta_rightarrow_0_tilde_R_v_delta_tested}
\left | \left \langle \frac{ R^\delta \ve^\delta}{2} , \boldsymbol \psi \right \rangle \right | &= \left | \int \limits_0^T \into  \nabla \frac{\partial \rho (\varphi^\delta)}{\partial \varphi^\delta} \cdot ( \tilde m (\varphi^\delta) \nabla \mu^\delta) \ve^\delta \cdot \boldsymbol \psi \dx \dt \right |  \leq C \|\boldsymbol \psi\|_{L^\infty (0,T; H^2 (\Omega))} 
\end{align}
for every $\boldsymbol \psi \in L^\infty(0,T; H^2 (\Omega)^d)$, where $C > 0$ is  independent of $\delta > 0$.

\end{enumerate}
We show these bounds in detail.
\begin{enumerate}
\item[Ad i)] This boundedness follows from the boundedness of
$\ve^\delta \in L^\infty (0,T; L^2_\sigma (\Omega)) \cap L^2 (0,T; L^6
(\Omega))$ and $\rho(\varphi^\delta) \in L^\infty (Q_T)$.
\item[Ad ii)] We need to estimate products of the form $\ve^\delta_k \rho' (\varphi^\delta) \tilde m (\varphi^\delta) \partial_{x_l} \mu^\delta$, where $k , l = 1,...,d$. The terms $\rho ' (\varphi^\delta)$ and $\tilde m (\varphi^\delta)$ are bounded in $L^\infty (Q_T)$. Thus the boundedness follows from the boundedness of $\ve^\delta \in L^4 (0,T; L^3 (\Omega)^d)$ and $\nabla \mu^\delta \in L^2 (Q_T)$.
\item[Ad iii)] This follows from $\mu^\delta \in L^2 (0,T; L^6 (\Omega))$ and $\nabla \varphi^\delta \in L^\infty (0,T; L^2 (\Omega)^d)$.
\item[Ad iv)] The growth conditions for $h$ and $W'$ yield the estimate
\begin{align*}
\left | \frac{h(q^\delta)}{\varepsilon} W ' (\varphi^\delta) \nabla \varphi^\delta \right | \leq \frac{C}{\varepsilon} \left ( |q^\delta| + 1\right ) \left ( |\varphi^\delta|^2 + 1 \right )  \left | \nabla \varphi^\delta \right | .
\end{align*}
By the Gagliardo-Nirenberg inequality  and the H\"older inequality we have the embedding
\begin{align*}
L^2 (0,T; H^2 (\Omega)) \cap L^\infty (0,T; L^6 (\Omega)) \hookrightarrow L^8 (0,T; L^{12} (\Omega)).
\end{align*}
Together with the boundedness of $q^\delta \in L^2 (0,T; L^6 (\Omega))$ and $ \nabla \varphi^\delta \in L^\infty (0,T; L^2 (\Omega)^d)$ we get the statement.
\item[Ad v)] We need to estimate products of the form $\rho '' (\varphi^\delta) \partial_j \varphi^\delta \tilde m (\varphi^\delta) \partial_j \mu^\delta \ve^\delta_k  \boldsymbol \psi_k$.
\\
To this end, we consider $\varphi^\delta = \varphi^\delta_1 + \varphi^\delta_2 + \varphi_0$, where $\varphi^\delta_1$ and $\varphi^\delta_2$ are the solutions of \eqref{equation_splitted_phi_phi_1}-\eqref{equation_splitted_phi_mean_value_phi_1_phi_2}.
First, we note that in (\ref{equation_splitted_phi_phi_1}), $h(q^\delta)$ is bounded in $ L^\infty (0,T; L^2 (\Omega))$ because of the growth condition \eqref{growth_condition_h}. Moreover, the growth condition $|W' (a)| \leq C ( |a|^s +1)$ for every $a \in \mathbb R$ and fixed $0 < s < 1$  implies that $W' (\varphi^\delta)$ is bounded in $L^\infty (0, T; L^{6 + s_1} (\Omega))$, where  $s_1  > 0$ depends on $s$. 
Thus $h(q^\delta) W' (\varphi^\delta)$ is bounded in $L^\infty (0,T; L^{\frac{3}{2} + s_2} (\Omega))$, where $s_2 > 0$ depends on $s$.  Due to the boundedness of $\Delta \varphi_0$ in $L^\infty (0,T; L^2 (\Omega))$, Lemma \ref{lemma_phi_solves_equation_independent_of_delta} yields that $\varphi_1^\delta $ is bounded in $L^p (0,T; W^2_{\frac{3}{2} + s_2} (\Omega))$ for every $1 \leq p < \infty$. Hence, we get that
\begin{align*}
\partial_j \varphi^\delta_1 \text{ is bounded in } L^p (0,T; W^1_{\frac{3}{2} + s_2} (\Omega)) \hookrightarrow L^p (0,T; L^{3 + s_3} (\Omega)) 
\end{align*}
for every $1 \leq p < \infty$ and $j = 1,...,d$, where $s_3> 0$ depends on $s$. 
Furthermore, Lemma~\ref{lemma_phi_solves_equation_independent_of_delta} yields that $\varphi^\delta_2$ is bounded in $L^2 (0,T; W^2_6 (\Omega))$ and therefore
\begin{align*}
\partial_j \varphi^\delta_2 \text{ is bounded in }L^2 (0,T; W^1_6 (\Omega)) \hookrightarrow L^2 (0,T; L^\infty (\Omega)) 
\end{align*}
for $j = 1,...,d$.
By interpolation we can conclude that for every $\varepsilon_2 \in (0,5]$, there exists $\varepsilon_1 > 0$ such that
\begin{align*} 
\ve^\delta   \text{ is bounded in }L^\infty (0,T; L^2_\sigma (\Omega)) \cap L^2 (0,T; L^6 (\Omega)^d) \hookrightarrow L^{2 + \varepsilon_1} (0,T; L^{6 - \varepsilon_2} (\Omega)^d) .
\end{align*}
Altogether, the boundedness of $\partial_j \mu^\delta $ in $ L^2 (0,T; L^2 (\Omega)) $, $\ve^\delta $ in $ L^{2 + \varepsilon_1} (0,T; L^{6 - \varepsilon_2} (\Omega)^d)$  and  $\partial_j \varphi^\delta_1$ in $ L^r (0,T; L^{3+ s_3} (\Omega))$ for every $1 \leq r < \infty$ and some $\varepsilon_1, \varepsilon_2, s_3 > 0$ yields
\begin{align*}
\left | \int \limits_0^T \into \rho '' (\varphi^\delta) \partial_j \varphi_1^\delta \tilde m (\varphi^\delta) \partial_j \mu^\delta \ve^\delta_k  \boldsymbol \psi_k \dx \dt \right | \leq C \|\boldsymbol \psi\|_{L^\infty (0,T; H^2 (\Omega))}
\end{align*}
for every $j,k = 1,...,d$, a constant $C > 0$ independent of $\delta$ and $\boldsymbol \psi \in L^\infty (0,T; H^2 (\Omega)^d)$. Analogously, we can conclude from $\partial_j \varphi^\delta_2 \in L^2 (0,T; L^\infty (\Omega))$, $\partial_j \mu^\delta \in L^2 (0,T; L^2 (\Omega))$ and $\ve^\delta \in L^\infty (0,T; L^2_\sigma (\Omega))$ 
\begin{align*}
\left | \int \limits_0^T \into \rho '' (\varphi^\delta) \partial_j \varphi_2^\delta \tilde m (\varphi^\delta) \partial_j \mu^\delta \ve^\delta_k  \boldsymbol \psi_k \dx \dt  \right | \leq C \|\boldsymbol \psi\|_{L^\infty (0,T; H^2 (\Omega))}
\end{align*}
for every $j,k = 1,...,d$, a constant $C > 0$ independent of $\delta$ and $\boldsymbol \psi \in L^\infty (0,T; H^2 (\Omega)^d)$. Both estimates together prove (\ref{prove_delta_rightarrow_0_tilde_R_v_delta_tested}). 
\end{enumerate}

Now the Lemma of Aubin-Lions  yields 
\begin{align*}
\mathbb P_\sigma (\rho^\delta \ve^\delta) \in L^2 (0,T; H^1_0 (\Omega)^d \cap L^2_\sigma (\Omega) )  \cap W^1_1 (0,T; V '  (\Omega)) \hookrightarrow \hookrightarrow L^2 (0,T; L^2_\sigma (\Omega)) .
\end{align*}
Thus we can show $\mathbb P_\sigma (\rho^\delta \ve^\delta ) \rightarrow \mathbb P_\sigma ( \rho \ve)$ in $L^2 (0,T; L^2_\sigma (\Omega))$ and therefore
\begin{align*}
\int \limits_0^T \into \rho^\delta |\ve^\delta|^2 \dx \dt = \int \limits_0^T \into \mathbb P_\sigma ( \rho^\delta \ve^\delta ) \cdot \ve^\delta \dx \dt \rightarrow \int \limits_0^T \into \mathbb P_\sigma (\rho \ve) \cdot \ve \dx \dt = \int \limits_0^T \into \rho |\ve|^2 \dx \dt 
\end{align*}
as $\delta \rightarrow 0$, which implies $(\rho^\delta )^\frac{1}{2} \ve^\delta \rightarrow \rho^\frac{1}{2} \ve$ in $L^2 (Q_T)$. This yields
\begin{align*}
\ve^\delta =  (\rho^\delta)^\frac{1}{2} (\rho^\delta )^\frac{1}{2} \ve^\delta  \rightarrow_{\delta\to 0} \ve \qquad \text{ in } L^2 (0,T; L^2 (\Omega)^d) 
\end{align*}
for every $T\in (0,\infty)$.
Since $\ve^\delta$ is bounded in $L^2(0,T;L^6(\Omega))$, we can even conclude $\ve^\delta \rightarrow \ve  \text{ in } L^2 (0,T; L^{6 - \varepsilon} (\Omega)^d) \text{ for every } 0 < \varepsilon \leq 5$. 

It remains to show that $(\ve, \varphi, \mu, q)$ is a weak solution of (\ref{equation_model_0}) - (\ref{equation_model_3}) in the sense of Definition \ref{definition_without_delta_weak_solutions}. To this end, we pass to the limit $\delta \rightarrow 0$ in (\ref{definition_weak_solution_time_dependent_case_0}) - (\ref{definition_weak_solution_time_dependent_case_3}). 
We prove the convergences 
\begin{align}\label{delta_to_zero_R_tilde_v_tested_with_psi}
\left \langle \frac{ R^\delta \ve^\delta}{2} , \boldsymbol \psi \right \rangle \rightarrow_{\delta\to 0} \left \langle \frac{R\ve}{2} ,\boldsymbol \psi \right \rangle
\end{align}
and
\begin{align}\label{delta_to_zero_Delta_2_ve_tested_with_psi}
\delta \int \limits_0^T \into  \Delta \ve^\delta\cdot \Delta \boldsymbol \psi \dx \dt \rightarrow_{\delta\to 0} 0
\end{align}
for all $\boldsymbol \psi \in C^\infty_0 (0,T; C^\infty_{0, \sigma} (\Omega))$ as $\delta \rightarrow 0$ in detail.
We already showed that $\left \langle \frac{ R^\delta \ve^\delta}{2} , \cdot \right \rangle$ is bounded in $L^1 (0,T; H^{-2} (\Omega)^d )$, cf.\ (\ref{prove_delta_rightarrow_0_tilde_R_v_delta_tested}).
For the proof of the convergence (\ref{delta_to_zero_R_tilde_v_tested_with_psi}), we consider $\varphi^\delta = \varphi^\delta_1 + \varphi^\delta_2 + \varphi_0$, where $\varphi^\delta_1$ and $\varphi^\delta_2$ are as before.
We have to study products of the form $ \rho '' (\varphi^\delta) \partial_j \varphi_1^\delta \tilde m (\varphi^\delta) \partial_j \mu^\delta \ve^\delta_k  \boldsymbol \psi_k$.
Since $\ve^\delta \rightarrow \ve$ in $L^2 (0,T; L^{6 - \varepsilon}_\sigma (\Omega))$ as $\delta \rightarrow 0$ for every $0 < \varepsilon \leq 5$ and since $(\ve^\delta)_{\delta > 0} , \ve$ is bounded in $L^\infty (0,T; L^2_\sigma (\Omega) ) $, it follows that for every $\varepsilon_2>0$ there is some $\varepsilon_1>0$ such that $\ve^\delta \rightarrow \ve \text{ in } L^{2 + \varepsilon_1} (0,T; L^{6 - \varepsilon_2} (\Omega)^d)$.
Due to the boundedness of $\partial_j \varphi_1^\delta$ in $ L^p (0,T; L^{3 + s_3} (\Omega))$ for every $1 \leq p < \infty$, where  $s_3 > 0$ depends on $s$, we get $\partial_j \varphi^\delta_1 \rightarrow \partial_j \varphi_1  \text{ in } L^q (0,T; L^{3 + s_4} (\Omega)) \text{ for all } 1 \leq q < \infty, \ j = 1,...,d$ as $\delta \rightarrow 0$, where $s_4\in (0, s_3)$ is arbitrary.
Altogether, we have
\begin{align*}
\partial_j \varphi^\delta_1 &\rightarrow_{\delta\to 0} \partial_j \varphi_1 && \text{ in } L^q (0,T; L^{3 + s_4} (\Omega)) \text{ for all } 1 \leq q < \infty, \ j = 1,...,d , \\
\ve^\delta &\rightarrow_{\delta\to 0} \ve && \text{ in } L^{2 + \varepsilon_1} (0,T; L^{6 - \varepsilon_2}_\sigma (\Omega)) , \\
\partial_j \mu^\delta &\rightharpoonup_{\delta\to 0} \partial_j \mu && \text{ in } L^2 (0,T; L^2 (\Omega)) \text{ for all }  j = 1,..., d.
\end{align*}
Now we choose 
 $\varepsilon_2 > 0$ so small that $\frac{1}{3 + s_4}  +  \frac{1}{6 - \varepsilon_2} + \frac{1}{2} \leq 1$. This determines $\eps_1>0$.
Then we can choose $1 \leq q < \infty$ sufficiently large such that $\frac{1}{q} + \frac{1}{2 + \varepsilon_1} + \frac{1}{2} = 1$. Therefore we can pass to the limit $\delta \rightarrow 0$ and obtain
\begin{align*}
\int \limits_0^T \into \rho '' (\varphi^\delta) \partial_j \varphi_1^\delta \tilde m (\varphi^\delta) \partial_j \mu^\delta \ve^\delta_k  \boldsymbol \psi_k \dx \dt  \rightarrow_{\delta\to 0} \int \limits_0^T \into \rho '' (\varphi) \partial_j \varphi_1 \tilde m (\varphi) \partial_j \mu \ve_k  \boldsymbol \psi_k \dx \dt
\end{align*}
for all $\boldsymbol \psi \in C^\infty_0 (0,T; C^\infty_{0, \sigma} (\Omega))$.
We also have to show the same convergence for $\varphi^\delta_2$. 
When we proved (\ref{prove_delta_rightarrow_0_tilde_R_v_delta_tested}), we already showed that $ \partial_j \varphi_2^\delta$ is bounded in $ L^2 (0,T; W^1_6 (\Omega))$. 
From $\partial_j \varphi^\delta_2 = \partial_j \varphi^\delta - \partial_j  \varphi_1^\delta - \partial_j  \varphi_0$, we can even conclude that $\partial_j \varphi^\delta_2 $ is bounded in $L^p (0,T; L^2 (\Omega))$ for every $1<p<\infty$, $0<T<\infty$ since this holds for all terms on the right-hand side. Now $\|f\|_{L^\infty(\Omega)}\leq C \|f\|_{W^1_6(\Omega)}^{\frac34}\|f\|_{L^2(\Omega)}^{\frac14}$, cf.\ \cite[Theorem 5.9]{MR2424078}, yields that
\begin{align*}
\partial_j \varphi^\delta_2 \text{ is bounded in } L^r (0,T; L^\infty (\Omega)) \text{ for every } 1\leq r<\frac{8}{3}, 0<T<\infty.
\end{align*}
Moreover, $\partial_j \varphi^\delta_2$ converges strongly in $L^2 (0,T; L^2 (\Omega))$ and almost everywhere since this is true for $\partial_j \varphi^\delta $ and $\partial_j \varphi^\delta_1 $. As a consequence, for every $1 \leq q_1 < \infty$, $1\leq r<\frac83$, and $0<T<\infty$ we have  $$\partial_j \varphi^\delta_2 \to_{\delta \to 0} \partial_j \varphi_2 \qquad \text{ in } L^r (0,T; L^{q_1} (\Omega)).$$
From the boundedness of $\ve^\delta $ in $ L^\infty (0,T; L^2_\sigma (\Omega))$ and $\ve^\delta \rightarrow \ve$ in $L^2 (0,T; L^{6 - \varepsilon} (\Omega)^d)$ for every $0 < \varepsilon \leq 5$ it follows that for every $1 \leq q_2 < \infty$ there exists $\varepsilon_2 > 0$ such that $\ve^\delta \rightarrow \ve \text{ in } L^{q_2} (0,T; L^{2 + \varepsilon_2} (\Omega)^d)$.
Thus we have
\begin{align*}
\partial_j \varphi^\delta_2 & \rightarrow_{\delta\to 0} \partial_j \varphi_2 && \text{ in } L^r (0,T; L^{q_1} (\Omega)) \text{ for all } 1 \leq q_1 < \infty , 1\leq r<\frac83,  j = 1,..., d, \\
\ve^\delta & \rightarrow_{\delta\to 0} \ve && \text{ in }  L^{q_2} (0,T; L^{2 + \varepsilon_2} (\Omega)^d) \text{ for all } 1 \leq q_2 < \infty, \\
\partial_j \mu^\delta & \rightharpoonup_{\delta\to 0} \partial_j \mu  && \text{ in } L^2 (0,T; L^2 (\Omega)) \text{ for all }  j = 1,..., d. 
\end{align*}
Now we choose $r\in (1,\tfrac83)$ and $q_2\in (2,\infty)$ such that $\frac1r+\frac12+\frac1{q_2}=1$ and  $q_1<\infty$ such that $\frac1{q_1}+\frac1{2+\varepsilon_2}+\frac12=1$. Then we obtain
\begin{align*}
\int \limits_0^T \into \rho '' (\varphi^\delta) \partial_j \varphi_2^\delta \tilde m (\varphi^\delta) \partial_j \mu^\delta \ve^\delta_k  \boldsymbol \psi_k \dx \dt  \to_{\delta\to 0} \int \limits_0^T \into \rho '' (\varphi) \partial_j \varphi_2 \tilde m (\varphi) \partial_j \mu \ve_k  \boldsymbol \psi_k \dx \dt
\end{align*}
for all $\boldsymbol \psi \in C^\infty_0 (0,T; C^\infty_{0, \sigma} (\Omega))$ as $\delta \rightarrow 0$, which shows (\ref{delta_to_zero_R_tilde_v_tested_with_psi}). It remains to prove (\ref{delta_to_zero_Delta_2_ve_tested_with_psi}). 
But this convergence follows from the energy inequality (\ref{delta_energy_estimate}), which implies the boundedness of $\delta^\frac{1}{2} \Delta \ve^\delta$ in $L^2 (0,T; L^2 (\Omega))$, and $\delta \Delta \ve^\delta = \delta ^\frac{1}{2} ( \delta^\frac{1}{2} \Delta \ve^\delta)$.

Finally, it remains to prove the energy inequality (\ref{energy_inequality}) for all $s \leq t < T$ and almost all $0 \leq s < T$ including $s=0$. With the same arguments as in \cite{MR3084319}, one can show $E_{tot} (\ve^\delta (t) , \varphi^\delta (t) , q^\delta (t)) \rightarrow E_{tot} (\ve (t), \nabla \varphi (t), q (t))$ for a.e. $t \in (0, T)$. More precisely, because of the lower semicontinuity of norms, $\varphi^\delta (x,t) \rightarrow \varphi (x,t)$, $q^\delta (x,t) \rightarrow q (x,t)$ a.e. in $\Omega\times (0,\infty)$, and
\begin{equation*}
  \left(\sqrt{\eta(\varphi^\delta)}D\ve^\delta, \sqrt{m(\varphi^\delta,q^\delta)}\nabla q^\delta  
, \sqrt{\tilde{m}(\varphi^\delta)}\nabla \mu^\delta \right)\rightharpoonup_{\delta\to 0}
\left(\sqrt{\eta(\varphi)}D\ve, \sqrt{m(\varphi,q)}\nabla q  
, \sqrt{\tilde{m}(\varphi)}\nabla \mu\right)
\end{equation*}
in $L^2(\Omega\times (0,\infty))$ it holds
\begin{align*}
\underset{\delta \rightarrow 0}{\lim \inf} \int \limits_0^T D^\delta (t) \tau (t) \dt \geq \int \limits_0^T D(t) \tau (t) \dt
\end{align*} 
for all $\tau \in W^1_1 (0,T)$ with $\tau \geq 0$ and $\tau (T) = 0$, where $D^\delta$ and $D$ are defined by
\begin{align*}
 D^\delta (t) &:= \into m (\varphi^\delta, q^\delta) \left | \nabla q^\delta \right | ^2 \dx + \into\tilde m (\varphi^\delta) |\nabla \mu^\delta|^2 \dx + \into 2 \eta (\varphi^\delta) |D\ve^\delta |^2 \dx \\
& \ \ \ \ + \delta \into |\Delta \ve^\delta|^2 \dx + \delta \into |\partial_t \varphi^\delta|^2 \dx , \\
D (t) &:=  \into m (\varphi, q) \left | \nabla q \right | ^2 \dx + \into\tilde m (\varphi) |\nabla \mu|^2 \dx + \into 2 \eta (\varphi) |D\ve|^2 \dx .
\end{align*}
From the energy estimate in the case $\delta > 0$, cf. (\ref{delta_energy_estimate}), we can conclude
\begin{align*}
E_{tot} (\ve_0 , \varphi_0, \nabla \varphi_0, q_0 ) \tau (0) + \int \limits_0^T E_{tot} (\ve^\delta (t), \varphi^\delta (t), \nabla \varphi^\delta (t), q^\delta (t) ) \tau' (t) \mathit{dt} \geq \int \limits_0^T D^\delta (t) \tau (t) \dt .
\end{align*}
Therefore, it follows in the limit $\delta \rightarrow 0$
\begin{align*}
E_{tot} (\ve_0 , \varphi_0, \nabla \varphi_0, q_0 ) \tau (0) + \int \limits_0^T E_{tot} (\ve (t), \varphi (t), \nabla \varphi (t), q (t) ) \tau' (t) \mathit{dt} \geq \int \limits_0^T D (t) \tau (t) \dt 
\end{align*}
for all $\tau \in W^1_1 (0,T)$  with $\tau \geq 0$ and $\tau (T) = 0$. But this implies the energy inequality because of \cite[Lemma 4.3]{MR2504845}.
\end{proof}


\section{Existence for the Approximate System}\label{Appendix}
It is the aim of this section to prove the existence of weak solutions for the approximating system \eqref{equation_delta_model_0}-\eqref{equation_delta_model_3}. To this end, we determine an appropriate time discretization. We solve the time-discrete problem by using the Leray-Schauder principle, cf.\ Theorem \ref{lemma_existence_solutions_time_discrete_problem} below. Then we prove Theorem \ref{theorem_existence_of_weak_solutions}.

For the time discretization, we set $h = \frac{1}{N}$ for $N \in \mathbb N$. Moreover, let $\ve_k \in L^2_\sigma (\Omega)$, $\varphi_k \in H^2_n (\Omega) $ and $q_k \in L^2 (\Omega)$ be given.
We determine $ (\ve_{k+1}, \varphi_{k+1} , \mu_{k+1}, q_{k+1} )$ as a weak solution of the system
\begin{align}
0 &= - \frac{ \rho_{k+1} \ve_{k+1} - \rho_k \ve_k}{h} - \di ( \rho_k \ve_{k+1} \otimes \ve_{k+1}) - \di ( \ve_{k+1} \otimes \Je_{k+1})+ \di \left ( 2 \eta (\varphi_{k}) D\ve_{k+1} \right ) \nonumber  \\
& \ \ \  - \nabla p_{k+1} + \frac{\tilde R_{k+1} \ve_{k+1}}{2}  + \left ( \mu_{k+1} - \frac{h(q_{k+1})}{\varepsilon} W' (\varphi_{k }) \right ) \nabla \varphi_{k } - \delta \Delta^2 \ve_{k+1}  \label{equation_time_discrete_0} ,  \\
0&=\di (\ve_{k+1}) , \\
0 & = \frac{1}{\varepsilon} \left ( \frac{f(q_{k+1}) - f(q_k)}{h} W (\varphi_k) + f(q_{k+1}) \frac{ W (\varphi_{k+1} ) - W (\varphi_k)}{h} \right ) \nonumber \\
& \ \ \ + \frac{g(q_{k+1}) - g( q_k)}{h} + \nabla \left ( \frac{1}{\varepsilon} f(q_{k+1}) W(\varphi_{k}) + g(q_{k+1}) \right ) \cdot \ve_{k+1} \nonumber \\
& \ \ \  - \di \left ( m(\varphi_k , q_k) \nabla  q_{k+1} \right ) \label{equation_time_discrete_1}    , \\
0 & = \frac{\varphi_{k+1} - \varphi_k}{h} + \nabla \varphi_k \cdot \ve_{k+1} - \di ( \tilde m (\varphi_k) \nabla \mu_{k+1} ) \label{equation_time_discrete_2} , \\
\mu _{k+1} & = - \varepsilon \Delta \varphi_{k+1} + h (q_{k+1}) \frac{1}{\varepsilon} H(\varphi_{k+1} , \varphi_k ) +  \delta \frac{\varphi_{k+1} - \varphi_k}{h} , \label{equation_time_discrete_3} 
\end{align}
with boundary conditions
\begin{align}\label{time_discrete_problem_boundary_values}
  \ve_{k+1}|_{\partial \Omega} = \Delta \ve_{k+1}|_{ \partial \Omega}= \partial_n \varphi_{k+1}|_{\partial \Omega} =  \partial_n \mu_{k+1}|_{\partial \Omega} =  \partial_n q_{k+1}|_{\partial \Omega} = 0 ,
\end{align}
where 
\begin{align}
\Je_{k+1} &= - \frac{\partial \rho }{\partial \varphi}(\varphi_k) \tilde m (\varphi_k) \nabla \mu_{k+1} ,    \label{definition_tilde_J_k_plus_1} \\
\tilde R_{k+1} &= \frac{ \rho (\varphi_{k+1}) - \rho (\varphi_k)}{h} + \di ( \rho (\varphi_k) \bold v_{k+1} + \Je_{k+1} )   \label{definition_R_k_plus_1}
\end{align}
and
$H : \mathbb R \times \mathbb R \rightarrow \mathbb R$ is defined by
\begin{align*}
H(a,b) := \begin{cases}
\frac{W(a) - W(b)}{a-b} & \text{ if } a \neq b , \\
W' (b) & \text{ if } a = b .
\end{cases} 
\end{align*}
Note that $H(a,b) (a-b) = W(a) - W(b)$ for every $a, b \in \mathbb R$.
It remains to define a weak solution for the time-discrete problem  \eqref{equation_time_discrete_0}-\eqref{time_discrete_problem_boundary_values}.

\begin{definition}[Weak solution of the time-discrete problem]~\label{definition_time_discrete_weak_solution}
\\
We call 
\begin{align*}
( \bold v_{k+1}, \varphi_{k+1}, \mu_{k+1}, q_{k+1})  \in  V(\Omega) \times H^2_n (\Omega) \times H^2_n (\Omega) \times H^1 (\Omega)
\end{align*}
 a weak solution of \eqref{equation_time_discrete_0}-\eqref{time_discrete_problem_boundary_values} for initial data $\bold v_k \in L^2_\sigma (\Omega)$, $ \varphi_k \in H^2_n (\Omega)$ and $q_k \in L^2 (\Omega)$ if it holds
\begin{align}\label{time_discrete_equation_test_version_0}
&\into  \frac{ \rho_{k+1} \bold v_{k+1} - \rho_k \bold v_k}{h} \cdot \boldsymbol \psi \dx  + \into  \mathrm{div} (\rho_k \bold v_{k+1} \otimes \bold v_{k+1} ) \cdot  \boldsymbol \psi \dx + \into 2 \eta (\varphi_k) D \bold v_{k+1} : D \boldsymbol \psi \dx \nonumber \\
& \  \ \  -  \into  (  \Je_{k+1} \otimes \bold v_{k+1}  ) : \nabla \boldsymbol \psi \dx - \frac12\left \langle \tilde R_{k+1} \bold v_{k+1} , \boldsymbol \psi \right \rangle + \delta \into \Delta \bold v_{k+1} \cdot \Delta \boldsymbol \psi \dx \nonumber \\
& = \into \left ( \mu_{k+1} - \frac{h(q_{k+1})}{\varepsilon} W' (\varphi_k) \right ) \nabla \varphi_k \cdot \boldsymbol \psi \dx
\end{align} 
for all $ \boldsymbol \psi \in V (\Omega)$, where $\Je_{k+1}$ is defined as in \eqref{definition_tilde_J_k_plus_1}, and
\begin{align}\label{time_discrete_equation_test_version_1}
\into &  \left ( \frac{1}{\varepsilon}   f(q_{k+1}) W(\varphi_{k}) + g(q_{k+1}) \right  ) \bold v_{k+1}  \cdot \nabla \phi \dx  = \into m(\varphi_k , q_k) \nabla q_{k+1} \cdot \nabla \phi \dx \nonumber \\
& + \frac{1}{ h} \into \left ( \frac{ f(q_{k+1}) W(\varphi_{k+1})}{\varepsilon} + g(q_{k+1}) - \frac{f(q_k) W(\varphi_k)}{\varepsilon} - g(q_k) \right ) \phi \dx ,
\end{align}
\begin{align}\label{time_discrete_equation_test_version_2}
0 &= \into \tilde m (\varphi_k) \nabla \mu_{k+1} \cdot \nabla \phi \dx + \into \frac{\varphi_{k+1} - \varphi_k}{h} \phi \dx + \into ( \nabla \varphi_k \cdot \bold v_{k+1} ) \phi \dx ,  
\end{align}
\begin{align}\label{time_discrete_equation_test_version_3}
\into \mu_{k+1} \phi \dx =& \into \varepsilon \nabla \varphi_{k+1} \cdot \nabla \phi \dx + \into h(q_{k+1} ) \frac{1}{\varepsilon} H(\varphi_{k+1}, \varphi_k ) \phi \dx  \nonumber \\
& + \delta \into \frac{\varphi_{k+1} - \varphi_k}{h} \phi \dx
\end{align}
for all $\phi \in H^1 (\Omega)$, where we define for $ \boldsymbol \psi \in V (\Omega)$
\begin{align*}
 \left \langle \tilde R_{k+1} \bold v_{k+1} , \boldsymbol \psi \right \rangle := \into \frac{\rho_{k+1} - \rho_k}{h} \bold v_{k+1} \cdot \boldsymbol \psi \dx - \into \left ( \rho_k \bold v_{k+1} + \Je_{k+1} \right ) \cdot \nabla ( \bold v_{k+1} \cdot \boldsymbol \psi ) \dx .
\end{align*}
\end{definition}

Note that \eqref{time_discrete_equation_test_version_0} can equivalently be written as
\begin{align}\label{reformulation_of_equation_time_discrete_0_using_div_v_J}
 & \into \left ( \frac{ \rho_{k+1} \ve_{k+1}  - \rho_k \ve_k}{h}  + \di ( \rho_k \ve_{k+1} \otimes \ve_{k+1}) \right ) \cdot \boldsymbol \psi \dx + \into  2 \eta (\varphi_{k}) D\ve_{k+1} : D \boldsymbol \psi \dx  \nonumber \\
& \ \   + \into  \left ( \di \Je_{k+1} - \frac{\rho_{k+1} - \rho_k}{h} - \ve_{k+1} \cdot \nabla \rho_k \right ) \frac{\ve_{k+1}}{2}  \cdot \boldsymbol \psi \dx + \into ( \Je_{k+1} \cdot \nabla ) \ve_{k+1} \cdot \boldsymbol \psi \dx  \nonumber \\
& \ \ + \delta \into \Delta \ve_{k+1} \Delta \boldsymbol \psi \dx  = \into \left ( \mu_{k+1} - \frac{h(q_{k+1})}{\varepsilon} W' (\varphi_{k }) \right ) \nabla \varphi_{k } \cdot \boldsymbol \psi \dx 
\end{align}
by using $\di ( \ve_{k+1} \otimes \Je_{k+1} ) = \di ( \Je_{k+1} ) \ve_{k+1} + ( \Je_{k+1} \cdot \nabla ) \ve_{k+1}$.

\begin{theorem}[Existence of weak solutions for the time-discrete problem]\label{lemma_existence_solutions_time_discrete_problem}~ \\
Let $\bold v_k \in  L^2_\sigma (\Omega)$, $\varphi_k \in H^2_n (\Omega)$ and  $q_k \in L^2 (\Omega)$ be given. Then there exist $\bold v_{k+1} \in V (\Omega)$, $\varphi_{k+1} \in H^2_n (\Omega)$, $ \mu_{k+1} \in H^2_n (\Omega)$ and $ q_{k+1} \in H^1 (\Omega)$ solving \eqref{equation_time_discrete_0}-\eqref{time_discrete_problem_boundary_values} in the sense of  Definition~\ref{definition_time_discrete_weak_solution}. Moreover,  the discrete energy estimate
\begin{align}\label{equation_discrete_energy_estimate}
& E_{tot}  (\bold v_{k+1}, \varphi_{k+1}, q_{k+1}) + \into \frac{\rho_k |\bold v_{k+1} - \bold v_k|^2}{2} \dx + h \into 2 \eta (\varphi_{k}) |D \bold v_{k+1}|^2 \dx \nonumber \\
& +  h \into m(\varphi_{k}, q_{k}) |\nabla  q_{k+1}|^2 \dx + h \into \tilde m (\varphi_k) |\nabla \mu_{k+1}|^2 \dx + \varepsilon \into \frac{ |\nabla \varphi_{k+1} - \nabla \varphi_k|^2}{2} \dx \nonumber \\ 
& + \delta \into \frac{|\varphi_{k+1} - \varphi_k|^2}{h} \dx + \delta h \into |\Delta \bold v_{k+1} |^2 \dx  \leq E_{tot}(\bold v_k, \varphi_k, q_k)  
\end{align}
is satisfied.
\end{theorem}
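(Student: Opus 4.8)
The plan is to construct $(\ve_{k+1},\varphi_{k+1},\mu_{k+1},q_{k+1})$ as a fixed point via the Leray--Schauder principle and to read off \eqref{equation_discrete_energy_estimate} from the construction. Fix the data $\ve_k\in L^2_\sigma(\Omega)$, $\varphi_k\in H^2_n(\Omega)$, $q_k\in L^2(\Omega)$, and for $\sigma\in[0,1]$ and a given triple $(\bar\ve,\bar\varphi,\bar q)$ in the base space $Z:=L^2_\sigma(\Omega)\times H^1(\Omega)\times L^2(\Omega)$ set up a linearized problem by freezing the genuinely nonlinear lower-order quantities at the barred arguments: replace $\rho(\varphi_{k+1})$ in the mass term and in $\tilde R_{k+1}$, the factor $h(q_{k+1})$, the secant slope $H(\varphi_{k+1},\varphi_k)$, the product $f(q_{k+1})W(\varphi_{k+1})$, and one of the two factors in the quadratic convection $\ve_{k+1}\otimes\ve_{k+1}$ by the corresponding expressions built from $(\bar\ve,\bar\varphi,\bar q)$, and multiply the capillary and convective coupling terms by $\sigma$. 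One then solves this problem in three successive linear, respectively monotone, steps --- first the surfactant equation for $q_{k+1}$, then the Cahn--Hilliard pair for $(\varphi_{k+1},\mu_{k+1})$, then the fourth-order momentum equation for $\ve_{k+1}$ --- which defines a map $\mathcal F_\sigma\colon Z\to Z$, $(\bar\ve,\bar\varphi,\bar q)\mapsto(\ve_{k+1},\varphi_{k+1},q_{k+1})$. A fixed point of $\mathcal F_1$ is then a weak solution in the sense of Definition~\ref{definition_time_discrete_weak_solution}.

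Each of the three subproblems is solvable by standard arguments. With the coefficients frozen, \eqref{time_discrete_equation_test_version_1} is a strongly monotone, coercive elliptic equation for $q_{k+1}\in H^1(\Omega)$: the zeroth-order part $\tfrac1h\bigl(\tfrac1\eps f(q_{k+1})W(\bar\varphi)+g(q_{k+1})\bigr)$ is monotone in $q_{k+1}$ because $f$ is monotone and $W\ge 0$ while $g$ is strongly monotone, and $-\di(m(\varphi_k,q_k)\nabla q_{k+1})$ is coercive since $m\ge c_1$; thus the theorem of Browder--Minty applies, using that $f$ is bounded and that $|g(q)|\le C(|q|+1)$ (which follows from $G'(0)=0$ and $|G'(q)|\le C(|q|+1)$). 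Given $q_{k+1}$, equations \eqref{time_discrete_equation_test_version_2}--\eqref{time_discrete_equation_test_version_3} form a linear Cahn--Hilliard-type system on the affine subspace of $H^1(\Omega)\times H^1(\Omega)$ cut out by the constraint $\into\varphi_{k+1}\dx=\into\varphi_k\dx$ (testing \eqref{time_discrete_equation_test_version_2} with $\phi\equiv 1$ and using $\di\ve_{k+1}=0$ forces exactly this constraint), whose associated bilinear form is coercive; hence it is uniquely solvable by Lax--Milgram, and elliptic regularity for the Neumann--Laplacian then gives $\varphi_{k+1},\mu_{k+1}\in H^2_n(\Omega)$. Finally, with $\mu_{k+1},\varphi_{k+1},q_{k+1}$ now fixed, \eqref{reformulation_of_equation_time_discrete_0_using_div_v_J} is a linear fourth-order Stokes-type problem for $\ve_{k+1}\in V(\Omega)$ whose form is coercive thanks to the terms $\delta\into|\Delta\ve_{k+1}|^2\dx$, $2\into\eta(\varphi_k)|D\ve_{k+1}|^2\dx$ and $\tfrac1h\into\rho_{k+1}|\ve_{k+1}|^2\dx$ (here $\rho$ and $\eta$ are bounded below), the lower-order convective and $\Je_{k+1}$-terms either vanishing when tested with $\ve_{k+1}$ by the discrete continuity identity \eqref{definition_R_k_plus_1} or being absorbed by interpolation (using $V(\Omega)\hookrightarrow L^\infty(\Omega)$). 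The operator $\mathcal F_\sigma$ thus maps $Z$ into $V(\Omega)\times H^2_n(\Omega)\times H^1(\Omega)$, which is compactly embedded in $Z$; its continuity follows from the continuity of the Nemytskii operators generated by $\rho,\eta,m,\tilde m,h,W,W',f,g$ together with the compact Sobolev embeddings used to pass to limits in the frozen terms; and $\mathcal F_0$ is a constant map. By the Leray--Schauder principle it therefore suffices to bound, uniformly in $\sigma\in[0,1]$, all fixed points of $\mathcal F_\sigma$.

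This uniform bound is provided by the discrete energy estimate: a fixed point satisfies \eqref{equation_time_discrete_0}--\eqref{equation_time_discrete_3} with the coupling terms carrying the factor $\sigma$, and the same computation that gives \eqref{equation_discrete_energy_estimate} for $\sigma=1$ yields a $\sigma$-weighted version with the same right-hand side $E_{tot}(\ve_k,\varphi_k,q_k)$. To obtain it one tests \eqref{time_discrete_equation_test_version_0} (in the equivalent form \eqref{reformulation_of_equation_time_discrete_0_using_div_v_J}) with $\boldsymbol\psi=\ve_{k+1}$, equation \eqref{time_discrete_equation_test_version_1} with $\phi=q_{k+1}$, equation \eqref{time_discrete_equation_test_version_2} with $\phi=\mu_{k+1}$, and equation \eqref{time_discrete_equation_test_version_3} with $\phi=\tfrac1h(\varphi_{k+1}-\varphi_k)$, and adds the four resulting identities. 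The convective terms together with $\tilde R_{k+1}$ cancel by virtue of $\di\ve_{k+1}=0$ and the discrete continuity identity \eqref{definition_R_k_plus_1}; the elementary identity $a(a-b)=\tfrac12(a^2-b^2+(a-b)^2)$, applied to $\rho_k^{1/2}\ve$ and to $\nabla\varphi$, and the discrete chain rule $H(\varphi_{k+1},\varphi_k)(\varphi_{k+1}-\varphi_k)=W(\varphi_{k+1})-W(\varphi_k)$ produce the differences of $\tfrac12\rho_k|\ve|^2$, $\tfrac\eps2|\nabla\varphi|^2$ and the squared increments appearing in \eqref{equation_discrete_energy_estimate}; the mixed difference quotient $\tfrac1\eps\bigl(\tfrac{f(q_{k+1})-f(q_k)}{h}W(\varphi_k)+f(q_{k+1})\tfrac{W(\varphi_{k+1})-W(\varphi_k)}{h}\bigr)$ chosen in \eqref{equation_time_discrete_1} combines, by means of $\di\ve_{k+1}=0$ and the relations \eqref{eq:Relations} (i.e.\ $d=h+fq$ and $h'=-f$), with the term $\tfrac1\eps h(q_{k+1})W'(\varphi_k)\nabla\varphi_k\cdot\ve_{k+1}$ from the momentum equation and with the $\tfrac1\eps h(q_{k+1})H(\varphi_{k+1},\varphi_k)$-term from \eqref{time_discrete_equation_test_version_3} into the discrete time difference of $\tfrac1\eps d(q)W(\varphi)+G(q)$ plus nonnegative remainders controlled by the monotonicity of $f$ (with $W\ge 0$) and by the convexity of $G$; and the remaining term $\into\mu_{k+1}\nabla\varphi_k\cdot\ve_{k+1}\dx$ cancels against the convective contribution of \eqref{time_discrete_equation_test_version_2}. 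Collecting all terms yields \eqref{equation_discrete_energy_estimate}; the asserted extra regularity $\varphi_{k+1},\mu_{k+1}\in H^2_n(\Omega)$ has already been obtained above. The Leray--Schauder principle then produces the desired solution.

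The fixed-point part of the argument is close to the corresponding one in \cite{MR3084319}; the genuinely new and delicate point is the discrete energy estimate. Because the free-energy density $\tfrac1\eps d(q)W(\varphi)+G(q)$ couples $q$ and $\varphi$ multiplicatively and the chemical potential $q$ enters the surfactant flux nonlinearly, the discretizations \eqref{equation_time_discrete_1} and \eqref{equation_time_discrete_3} have to be designed precisely --- with the mixed difference quotient above and the secant slope $H(\cdot,\cdot)$ --- so that, after testing, the Gibbs/Legendre relations \eqref{eq:Relations} make every ``cross'' term collapse into a clean telescoping difference of $E_{tot}$ with only nonnegative remainders and the prescribed dissipation; keeping track of the signs in the surfactant block is exactly where the full strength of Assumption~\ref{assumption12} (monotonicity of $f$, $W\ge 0$, strict convexity of $G$, the structure of $d$ and $h$) enters. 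A second, technically demanding point is the correct handling of the source term $\tilde R_{k+1}$, which is why one works with the reformulation \eqref{reformulation_of_equation_time_discrete_0_using_div_v_J} and the discrete continuity identity \eqref{definition_R_k_plus_1}, and why the regularizations $\delta\Delta^2\ve$ and $\delta\partial_t\varphi$ were built into the approximate system in the first place.
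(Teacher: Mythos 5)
Your proof reaches the same conclusion by the same two main ingredients as the paper --- a Leray--Schauder fixed-point argument and the discrete energy identity --- but the fixed-point setup is a genuinely different route. The paper splits the time-discrete system as $\mathcal L_k(\bold w)=\mathcal F_k(\bold w)$ with a \emph{linear}, coercive, invertible $\mathcal L_k\colon X\to Y$ and a compact nonlinear part $\mathcal F_k\colon X\to Y$, then applies Leray--Schauder to $\mathcal K_k:=\mathcal F_k\circ\mathcal L_k^{-1}$ on the weak codomain space $Y$; every auxiliary problem solved along the way is linear (Lax--Milgram). You instead freeze the nonlinear coefficients at a barred triple and solve the resulting system as a cascade --- surfactant equation by Browder--Minty, Cahn--Hilliard pair and momentum equation by Lax--Milgram --- producing a compact self-map of a base space. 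This is a standard alternative implementation of Leray--Schauder; the paper's operator split buys the convenience of never having to solve a nonlinear subproblem, whereas the cascade keeps the bookkeeping of which variable depends on which more transparent.

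Two points in the cascade need tightening. First, the base space $Z=L^2_\sigma(\Omega)\times H^1(\Omega)\times L^2(\Omega)$ is too weak for the velocity component: in the frozen surfactant equation the convective functional $\phi\mapsto\int_\Omega\bigl(\tfrac1\varepsilon f(q_{k+1})W(\varphi_k)+g(q_{k+1})\bigr)\bar\ve\cdot\nabla\phi\,\dx$ is not bounded on $H^1(\Omega)$ when $\bar\ve\in L^2_\sigma(\Omega)$ only, since $g(q_{k+1})\in L^6(\Omega)$ gives $g(q_{k+1})\bar\ve\in L^{3/2}(\Omega)$, which does not pair with $\nabla\phi\in L^2(\Omega)$; taking $H^1_0(\Omega)^d\cap L^2_\sigma(\Omega)$ (or $V(\Omega)$) as the velocity slot of $Z$ repairs this while keeping the compact embedding from $V(\Omega)$. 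Second, with the convective term retained in the $q_{k+1}$-unknown, the surfactant operator is \emph{not} monotone --- the convective contribution has no sign on differences --- so plain Browder--Minty does not apply; it is a compact, lower-order perturbation of a strongly monotone operator, and one has to invoke the pseudomonotone-operator version of the theorem and check coercivity directly, using $\di\,\bar\ve=0$ and the growth bounds on $h$ as you indicate. (Your description also wavers between freezing the whole product $f(q_{k+1})W(\varphi_{k+1})$ at $f(\bar q)W(\bar\varphi)$ and keeping $f(q_{k+1})$ in the subproblem; the latter is what you actually use, and is the right choice.)

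The derivation of the discrete energy estimate \eqref{equation_discrete_energy_estimate} that you sketch --- test with $\ve_{k+1}$, $q_{k+1}$, $\mu_{k+1}$ and $\tfrac1h(\varphi_{k+1}-\varphi_k)$, use $\bold a\cdot(\bold a-\bold b)=\tfrac12(|\bold a|^2-|\bold b|^2+|\bold a-\bold b|^2)$, the discrete chain rule $H(\varphi_{k+1},\varphi_k)(\varphi_{k+1}-\varphi_k)=W(\varphi_{k+1})-W(\varphi_k)$, concavity of $h$ via $f=-h'$ together with $W\ge 0$, the strong monotonicity of $g$ against $G$, the Gibbs relation $d=h+fq$, and the mutual cancellation of the capillary/Marangoni and convective cross terms --- is exactly the paper's computation and correctly identifies this as the delicate, structurally crucial part of the argument.
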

The proof generalizes  the proof of \cite[Lemma 4.2]{MR3084319}, where the existence of weak solutions for the model without surfactants developed in \cite{MR2890451} is shown.

\begin{proof}[Proof of Theorem \ref{lemma_existence_solutions_time_discrete_problem}]~\\
We start with the proof that the energy estimate (\ref{equation_discrete_energy_estimate}) holds for any weak solution of \eqref{equation_time_discrete_0}-\eqref{equation_time_discrete_3} in the sense of Definition \ref{definition_time_discrete_weak_solution}.
To this end, we test equation (\ref{time_discrete_equation_test_version_0}) with $\ve_{k+1}$ (i.e., choose $\boldsymbol{\psi}=\ve_{k+1}$), (\ref{time_discrete_equation_test_version_1}) with $q_{k+1}$, (\ref{time_discrete_equation_test_version_2}) with $\mu_{k+1}$ and (\ref{time_discrete_equation_test_version_3}) with $\frac{\varphi_{k+1} - \varphi_k}{h}$.
Then one important tool to simplify the equations is the identity
\begin{align*}
\bold a \cdot ( \bold a - \bold b) = \frac{| \bold a |^2}{2} - \frac{|\bold b|^2}{2} + \frac{|\bold a - \bold b|^2}{2} \qquad \text{ for } \bold a, \bold b \in \mathbb R^d .
\end{align*}
Note that when we test (\ref{time_discrete_equation_test_version_3}) with  $\frac{\varphi_{k+1} - \varphi_k}{h}$,  we use the identity $H (\varphi_{k+1} , \varphi_k) (\varphi_{k+1} - \varphi_k) = W (\varphi_{k+1} ) - W (\varphi_k)$. In \eqref{time_discrete_equation_test_version_1} tested with $q_{k+1}$, we use $f(q)  = - h' (q)$ for all $q \in \mathbb R$ and the fact that $h$ is a concave function to obtain
\begin{align*}
( f (q_{k+1} ) - f(q_k)) q_{k+1} &= f (q_{k+1} ) q_{k+1}  - f(q_k) q_k + f(q_k) (q_k - q_{k+1} ) \\
&= f (q_{k+1} ) q_{k+1}  - f(q_k) q_k + h' (q_k) ( q_{k+1} - q_k ) \\
& \geq f (q_{k+1} ) q_{k+1}  - f(q_k) q_k +  h (q_{k+1} ) - h(q_k).
\end{align*}
Moreover, we can estimate
\begin{align*}
(g (q_{k+1}) - g (q_k)) q_{k+1} = \int_{q_k}^{q_{k+1}}(-g'(s))ds\, q_{k+1}\geq -\int_{q_k}^{q_{k+1}}g'(s)s\, ds \geq G(q_{k+1}) - G(q_k) 
\end{align*}
and have $d(q) = h(q) + f(q)q$ for all $q \in \mathbb R$. Altogether this  yields  \eqref{equation_discrete_energy_estimate}.

Now we need to prove the existence of weak solutions for the time-discrete problem \eqref{equation_time_discrete_0}-\eqref{equation_time_discrete_3}. To this end, we define two operators $\mathcal L_k , \mathcal F_k : X \rightarrow Y$ and apply the Leray-Schauder principle, where
\begin{align*}
X &:= V(\Omega) \times H^1 (\Omega) \times H^2_n (\Omega) \times H^1 (\Omega) ,\\
Y &:= V' (\Omega) \times H^{-1}_0 (\Omega) \times L^2 (\Omega) \times H^{-1}_0 (\Omega).
\end{align*}
Here $V ' (\Omega):= V (\Omega)'$ and $H^{-1}_0 (\Omega) := (H^1 (\Omega))'$.
For $\bold w_{k+1} := (\ve_{k+1},q_{k+1}, \mu_{k+1}, \varphi_{k+1}) \in X$ we define the operator $\mathcal L_k : X \rightarrow Y$ by
\begin{align*}
\mathcal L_k (\bold w_{k+1}) =  \begin{pmatrix} \mathcal A (\varphi_k) \ve_{k+1} \\  \di_N ( m(\varphi_{k}, q_{k}) \nabla q_{k+1}) - \into q_{k+1} \dx \\ \di ( \tilde m (\varphi_k) \nabla \mu_{k+1})  - \into \mu_{k+1} \dx \\  \varepsilon \Delta_N \varphi_{k+1} - \into \varphi_{k+1} \dx \end{pmatrix} ,
\end{align*}
where $\mathcal A (\varphi_k) : V (\Omega) \rightarrow V' ( \Omega)$ is given by
\begin{align*}
\left \langle \mathcal A (\varphi_k) \ve_{k+1} ,  \boldsymbol \psi \right \rangle := - \into 2 \eta (\varphi_k)  D \ve_{k+1} : D \boldsymbol \psi \dx - \delta \into \Delta \ve_{k+1} \Delta \boldsymbol \psi \dx
\end{align*}
for all $\boldsymbol \psi  \in V (\Omega)$ and $\di_N : L^2 (\Omega)^d \rightarrow H^{-1}_0  (\Omega) $ and $\Delta_N : H^1 (\Omega) \rightarrow H^{-1}_0 (\Omega)$ are defined by
\begin{align*}
\left \langle \di_N \ \bold f , \phi \right \rangle := - \into \bold f \cdot \nabla \phi \dx , \qquad
\left \langle \Delta_N \varphi, \phi \right \rangle := - \into \nabla \varphi \cdot \nabla \phi \dx
\end{align*}
for all $\bold f \in L^2 (\Omega)^d$, $\varphi \in H^1 (\Omega)$ and $\phi  \in H^1 (\Omega)$.
Moreover, we define for \linebreak $ \bold w_{k+1} = (\ve_{k+1}, q_{k+1}, \mu_{k+1}, \varphi_{k+1}) \in X$ the operator $\mathcal F_k : X \rightarrow Y$ by
\begin{align*}
\mathcal F_k (\bold w_{k+1}) = \begin{pmatrix}\frac{\rho_{k+1} \ve_{k+1} - \rho_k \ve_k}{h}  + \left ( \di \Je_{k+1} - \frac{\rho_{k+1} - \rho_k}{h} - \ve_{k+1} \cdot \nabla \rho_k \right ) \frac{\ve_{k+1}}{2}  \\ 
+ ( \Je_{k+1} \cdot \nabla ) \ve_{k+1}  - \left ( \mu_{k+1} - \frac{h(q_{k+1})}{\varepsilon} W'(\varphi_k) \right ) \nabla \varphi_k \\
+ \di (\rho_k \ve_{k+1} \otimes \ve_{k+1})  \\ 
\ \\
\frac{1}{\varepsilon} \left ( \frac{f(q_{k+1}) - f(q_k)}{h} W(\varphi_k) + f(q_{k+1}) \frac{W(\varphi_{k+1}) - W(\varphi_k)}{h} \right ) + \frac{g(q_{k+1}) - g( q_k)}{h} \\ 
 + \nabla \left ( \frac{1}{\varepsilon} f(q_{k+1}) W(\varphi_k) + g(q_{k+1}) \right ) \cdot \ve_{k+1} -  \into q_{k+1} \dx \\ 
\ \\
\frac{\varphi_{k+1} - \varphi_k}{h} + \nabla \varphi_k \cdot \ve_{k+1} - \into \mu_{k+1} \dx \\
\ \\  
 h(q_{k+1}) \frac{1}{\varepsilon} H(\varphi_{k+1} , \varphi_k)  - \mu_{k+1} + \delta \frac{\varphi_{k+1} - \varphi_k}{h} - \into \varphi_{k+1} \dx \end{pmatrix} ,
\end{align*}
where $\Je_{k+1}$ is given as in (\ref{definition_tilde_J_k_plus_1}). Note that for the definition of $\mathcal F_k : X \rightarrow Y$ we used \eqref{reformulation_of_equation_time_discrete_0_using_div_v_J}, which is equivalent to \eqref{time_discrete_equation_test_version_0}.
Then  it holds 
\begin{align*}
\mathcal L_k (\bold w_{k+1}) - \mathcal F_k (\bold w_{k+1}) = 0 \qquad \text{ in } Y
\end{align*}
if and only if $\bold w_{k+1} = (\ve_{k+1}, q_{k+1}, \mu_{k+1}, \varphi_{k+1}) \in X$ is a weak solution of \eqref{equation_time_discrete_0}-\eqref{equation_time_discrete_3}. 
For the equation 
\begin{align*}
\mathcal L (\bold w_{k+1} ) = \bold F  
\end{align*}
with $\bold F \in Y$, the Lax-Milgram theorem yields the existence of unique weak solutions $\ve_{k+1} \in V (\Omega)$ and $\varphi_{k+1}$, $\mu_{k+1}$, $q_{k+1} \in H^1 (\Omega)$. 
By elliptic regularity we obtain that  $\mu_{k+1}\in H^2_n (\Omega)$.
Hence $\mathcal L_k : X \rightarrow Y$ is invertible and the open mapping theorem yields the boundedness of  $\mathcal L_k ^{-1} : Y \rightarrow X$.
\\
The next step is to show that $\mathcal F_k : X \rightarrow  Y$ is a compact operator. 
To this end, we introduce the Banach space
\begin{align*}
\tilde Y := L^{\frac{4}{3}}_\sigma  (\Omega) \times L^{\frac{4}{3}} (\Omega) \times W^1_\frac{3}{2} (\Omega) \times L^2 (\Omega) .
\end{align*}
We can conclude that $\mathcal F_k : X \rightarrow \tilde Y$ is bounded since it holds for $\bold w_{k+1} \in X$ 
\begin{align*}
\|\di (\rho_k \ve_{k+1} \otimes \ve_{k+1})\|_{L^{\frac{4}{3}} (\Omega)} & \leq C_k  \|\ve_{k+1}\|^2_{H^1 (\Omega)},  \\
\|(\di \Je_{k+1}) \ve_{k+1}\|_{L^\frac{4}{3} (\Omega)} & \leq C_k \|\ve_{k+1}\|_{H^1 (\Omega)} \|\mu_{k+1}\|_{H^2 (\Omega)}  ,  \\
\norm{h(q_{k+1}) W' (\varphi_k) \nabla \varphi_k}_{L^{\frac{4}{3}} (\Omega)} & \leq C_k \left ( \|q_{k+1}\|_{H^1 (\Omega)} + 1 \right ) , \\
\norm{\nabla ( f(q_{k+1}) W(\varphi_{k}) ) \cdot \ve_{k+1}}_{L^{\frac{4}{3}}(\Omega)} & \leq C_k  \|q_{k+1}\|_{H^1 (\Omega)}  \|\ve_{k+1}\|_{H^1 (\Omega)} , \\
\norm{\nabla g(q_{k+1}) \cdot \ve_{k+1}}_{L^{\frac{4}{3}} (\Omega)} & \leq C \|q_{k+1}\|_{H^1 (\Omega)} \|\ve_{k+1}\|_{H^1 (\Omega)} , \\
\|h(q_{k+1}) H(\varphi_{k+1}, \varphi_k) \|_{L^2 (\Omega)} & \leq C_k \left ( \|q_{k+1}\|_{L^6 (\Omega)} + 1 \right ) \left ( \|\varphi_{k+1}\|^2_{L^6 (\Omega)} + 1 \right ) .
\end{align*}
These estimates follow from Sobolev embeddings, the boundedness of $f$ together with the growth conditions $|h (q)| \leq C (|q|+ 1)$, $|W(q)| \leq C (|q|^3 +1)$, $|W' (q)| \leq C (|q|^2 + 1)$ and $|G' (q)| \leq C (|q|+ 1)$ for all $q \in \mathbb R$ and a constant $C > 0$. Moreover, we use the identities $f = - h'$, $G' (q) = g'(q)q$ and the fact that $f$ is constant outside an interval $[q_{min} , q_{max}]$. 
\\
The continuity of the linear terms of $\mathcal F_k : X \rightarrow \tilde Y$ follows from their boundedness. For the nonlinear terms, the continuity can be proven with the aid of the theorem on continuity of Nemyckii operators on $L^p$-spaces and with the multilinear structures. Since $\mathcal F_k : X \rightarrow \tilde Y$ is a continuous and bounded operator and since the embeddings $H^1 (\Omega) \hookrightarrow  L^4 (\Omega)$, $L^{\frac43}(\Omega)\hookrightarrow H^{-1}_0(\Omega)$, and $W^1_{\frac32} (\Omega) \hookrightarrow  L^2 (\Omega)$ are compact, we can conclude that $\mathcal F_k : X \rightarrow Y $ is a compact operator.
\\
In the following, we want to apply the Leray-Schauder principle.
 We already noted that $\bold w_{k+1} \in X$ is a weak solution of   (\ref{equation_time_discrete_0}) - (\ref{equation_time_discrete_3}) if and only if  $\mathcal L _k (\bold w_{k+1}) - \mathcal F_k ( \bold w_{k+1}) = 0 \text{ in } Y $.
This is equivalent to
\begin{align*}
\bold g_{k+1} - \mathcal F_k \circ \mathcal L_k ^{-1} (\bold g_{k+1}) = 0 \qquad \text{ in } Y \quad \text{ for } \bold g_{k+1} := \mathcal L_k (\bold w_{k+1}) .
\end{align*}
We set $\mathcal K _k := \mathcal F_k \circ \mathcal L_k^{-1} :  Y \rightarrow Y$ and note that proving the existence of a weak solution for  (\ref{equation_time_discrete_0}) - (\ref{equation_time_discrete_3}) is equivalent to proving the existence of a fixed-point of
\begin{align*}
\bold g_{k+1} - \mathcal K_k (\bold g_{k+1}) = 0 \quad \text{ in } Y \quad \Leftrightarrow \quad \bold g_{k+1} = \mathcal K_k ( \bold g_{k+1}) \quad \text{ in } Y.
\end{align*}
We can prove the existence of such a fixed-point with the Leray-Schauder principle, cf. \cite[Theorem 1.D.]{MR1347691}. We have to show:
\begin{align}\label{property_leray_schauder_principle}
& \text{There exists }  r_{k+1}  > 0  \text{ such}  \text{ that} \text{ if }  \bold g_{k+1} \in  Y    \text{ solves } \bold g_{k+1} = \lambda \mathcal K_k \bold g_{k+1}  \nonumber \\
& \text{with } 0 \leq \lambda < 1   , \text{ then it holds  } \|\bold g_{k+1}\|_{Y} \leq r_{k+1}.
\end{align}
To this end, let $\bold g_{k+1} \in  Y$ and $0 \leq \lambda < 1$ be given such that $\bold g_{k+1} = \lambda \mathcal K_k \bold g_{k+1}$. As $ \bold w_{k+1} = \mathcal L_k^{-1} (\bold g_{k+1}) \in X$, we conclude
\begin{align*}
\bold g_{k+1} = \lambda  \mathcal K_k ( \bold g_{k+1}) \text{ in } Y \quad \Leftrightarrow \quad \mathcal L_k ( \bold w_{k+1}) - \lambda \mathcal F_k (\bold w_{k+1}) = 0 \text{ in } Y.
\end{align*}
Testing (\ref{time_discrete_equation_test_version_0}) with $\ve_{k+1}$, (\ref{time_discrete_equation_test_version_1}) with $q_{k+1}$, (\ref{time_discrete_equation_test_version_2}) with $\mu_{k+1}$ and (\ref{time_discrete_equation_test_version_3}) with $\frac{\varphi_{k+1} - \varphi_k}{h}$, using the same identities as in the derivation of the energy inequality and omitting some non-negative terms, we can estimate $\bold w_{k+1} = \mathcal L_k^{-1} (\bold g_{k+1}) = (\ve_{k+1}, \varphi_{k+1} , \mu_{k+1}, q_{k+1})$ in  $X$ by
\begin{align*}
\|\ve_{k+1}\|^2_{H^2 (\Omega)} +  \|q_{k+1}\|^2_{H^1 (\Omega)} +\|\mu_{k+1}\|^2_{H^2 (\Omega)} + \|\varphi_{k+1}\|^2_{H^1 (\Omega)} \leq C_k .
\end{align*}
For more details, we refer to \cite[Section~3.2.3]{Dissertation_Weber}.
Thus (\ref{property_leray_schauder_principle}) is fulfilled and the Leray-Schauder principle yields the existence of $\bold g_{k+1} \in  Y$ such that $ \bold g_{k+1} - \mathcal K_k (\bold g_{k+1}) = 0$, which is equivalent to $\mathcal L_k (\bold w_{k+1}) - \mathcal F_k (\bold w_{k+1}) = 0$, where $\bold w_{k+1} = \mathcal L_k^{-1} (\bold g_{k+1})$.

Finally, we need to show higher regularity for $\varphi_{k+1}$. From $\mathcal L _k (\bold w_{k+1}) = \mathcal F_k (\bold w_{k+1})$ with $\bold w_{k+1} = (\ve_{k+1}, q_{k+1} , \mu_{k+1} , \varphi_{k+1})$ it follows
\begin{align*}
\varepsilon \Delta_N \varphi_{k+1} = - \mu_{k+1} + h(q_{k+1}) \frac{1}{\varepsilon}H (\varphi_{k+1}, \varphi_k) + \delta \frac{\varphi_{k+1} - \varphi_k}{h} \qquad \text{ in } H^{-1}_0 (\Omega),
\end{align*} 
where the right-hand side is bounded in the $L^2$-norm. Thus elliptic regularity theory yields $\varphi_{k+1} \in H^2_n (\Omega)$.
Hence, there exists a weak solution for the time-discrete problem \eqref{equation_time_discrete_0}-\eqref{equation_time_discrete_3} in the sense of Definition \ref{definition_time_discrete_weak_solution}, which fulfills the discrete energy estimate (\ref{equation_discrete_energy_estimate}).
\end{proof}

Using Theorem \ref{lemma_existence_solutions_time_discrete_problem}, we can prove the existence of weak solutions for the approximating system \eqref{equation_delta_model_0}-\eqref{equation_delta_model_3}.
\begin{proof}[Proof of Theorem \ref{theorem_existence_of_weak_solutions}]~ \\
We start with fixed $ N \in \mathbb N$ and set $h = \frac{1}{N}$.
Then Theorem \ref{lemma_existence_solutions_time_discrete_problem} iteratively yields the existence of weak solutions
\begin{align*}
(\ve_{k+1}, q_{k+1}, \mu_{k+1} , \varphi_{k+1} )  \in V(\Omega) \times  H^1 (\Omega) \times H^2_n(\Omega) \times H^2_n (\Omega) 
\end{align*}
for the time-discrete problem \eqref{equation_time_discrete_0}-\eqref{equation_time_discrete_3}. 
We define interpolating functions $f^N (t)$ on $[- h , \infty)$ by $f^N (t) = f_k$ for  $ t \in [(k-1)h, kh)$, where $k \in \mathbb N_0$ and $f_k \in \{ \ve_k,  \varphi_{k},  q_{k} \}$, resp. $\mu^N (t)$ on $[0, \infty)$ by $\mu^N (t) = \mu_k$ for $ t \in [(k-1)h, kh)$, where $k \in \mathbb N$. 
With these definitions it holds $f^N ( (k-1)h ) = f_k$, $f^N (kh) = f_{k+1}$ and $f^N (t) = f_{k+1}$ for $t \in [kh, (k+1)h )$ for $f^N \in \{\ve^N, \varphi^N, q^N\}$, $k \in \mathbb N_0$, and $\mu^N ((k-1)h) = \mu_k$,  $\mu^N (kh) = \mu_{k+1}$ for $k \in \mathbb N$.
Furthermore, we use the abbreviations
\begin{align*}
( \Delta _h^+ f) (t) &:= f (t+h) - f(t) , \quad  &&(\Delta _h^- f) (t) := f(t) - f(t-h) , \\
\partial_{t,h}^+ f(t) &:= \frac{1}{h} (\Delta^+ _h f) (t) , \quad && \partial_{t,h}^- f (t) := \frac{1}{h} (\Delta _h^- f) (t) , \\
f_h (t) &:= (\tau_h^* f) (t) = f(t-h) ,  && f_{h+} (t) := f(t+h) 
\end{align*}
and set
\begin{align*}
\rho^N &:= \rho (\varphi^N)  , \qquad \rho^N_h := \rho (\varphi^N_h) , \qquad \Je^N := - \rho ' (\varphi^N_h) \tilde m (\varphi^N_h) \nabla \mu^N , \\
\tilde R^N &:= \partial^-_{t,h} \rho^N + \di \left ( \rho^N_h \ve^N + \Je^N \right ) .
\end{align*}
We choose an arbitrary $\boldsymbol \psi \in C^{\infty}_{(0)} ([0, \infty) ; V (\Omega) )$ and set $\boldsymbol{ \tilde  \psi_k }:= \int \limits_{kh}^{(k+1)h} \boldsymbol \psi \dt$ as test function in (\ref{time_discrete_equation_test_version_0}). Then we sum over $k \in \mathbb N_0$. This yields
\begin{align}\label{equation_discrete_continuous_0}
& - \int \limits_0^{\infty}  \into \rho^N \ve^N  \cdot \partial^+_{t,h} \boldsymbol \psi \dx \dt-\into \rho(\phi_0)\ve_0\cdot\frac1h \int_0^h\boldsymbol \psi\dt \dx + \int \limits_0^\infty \into  ( \rho^N_h \ve^N \otimes \ve^N ) : \nabla \boldsymbol \psi \dx \dt \nonumber \\
& +  \int \limits_0^\infty \into 2 \eta (\varphi^N_h ) D \ve^N : D \boldsymbol \psi \dx \dt     - \int \limits_0^\infty \into \left ( \Je^N \otimes \ve^N  \right ) : \nabla \boldsymbol \psi \dx \dt - \left \langle \frac{\tilde R^N \ve^N}{2} , \boldsymbol \psi \right \rangle  \nonumber \\
&  + \delta \int \limits_0^\infty \into  \Delta \ve\cdot \Delta \boldsymbol \psi \dx \dt =  \int \limits_0^\infty \into \left ( \mu^N - \frac{h(q^N)}{\varepsilon} W' (\varphi^N_h ) \right ) \nabla \varphi^N_h \cdot \boldsymbol \psi \dx \dt  
\end{align}
for all $\boldsymbol \psi \in C^\infty _{(0)} ( [0, \infty) ; V (\Omega) )$, where $ \left \langle \frac{\tilde R^N \ve^N}{2} , \boldsymbol \psi \right \rangle$ is defined analogously to (\ref{definition_Rv_2_psi}):
\begin{align*}
 \left \langle \frac{\tilde R^N \ve^N}{2} , \boldsymbol \psi \right \rangle :=& \frac{1}{2} \int \limits_0^\infty \into \frac{\rho^N - \rho^N_h}{h} \ve^N \cdot \boldsymbol \psi \dx \dt  
 -  \frac{1}{2}   \int \limits_0^\infty \into \left (  \rho^N_h \ve^N  + \Je^N \right ) \cdot \nabla ( \ve^N  \cdot \boldsymbol \psi )   \dx \dt .
\end{align*}
Now let $\phi \in C^{\infty}_{(0)}([0, \infty); C^1 (\overline \Omega ))$ be arbitrary. We set  $\tilde \phi := \int \limits_{kh}^{(k+1)h} \phi \dt$ as test function in \eqref{time_discrete_equation_test_version_1}-\eqref{time_discrete_equation_test_version_3} and sum over $k \in \mathbb N_0$ again. Then we get
\begin{alignat}{1}
 &-\int \limits_0^{\infty} \into  \left ( f(q^N) W(\varphi^N) + g(q^N ) \right ) \partial^+_{t,h}\phi \dx \dt - \into (f(q_0) W(\varphi_0) + g(q_0))\frac1h \into \phi\dt dx \label{equation_discrete_continuous_1}   \\\nonumber
& \ \  - \int \limits_0^\infty \into   \left ( \frac{1}{\varepsilon} f(q^N) W(\varphi^N_h) + g(q^N) \right ) \ve^N \cdot \nabla \phi \dx \dt  = - \int \limits_0^{\infty} \into m(\varphi^N_h , q^N_h ) \nabla q^N  \cdot \nabla \phi \dx \dt 
\end{alignat}
as well as
\begin{alignat}{1}
- \int \limits_0^{\infty} \into \tilde m (\varphi^N_h) \nabla \mu^N \cdot \nabla \phi \dx \dt =& \int \limits_0^{\infty} \into \partial^-_{t,h} \varphi^N  \phi \dx \dt + \int \limits_0 ^\infty \into \nabla \varphi^N_h \cdot \ve^N \phi \dx \dt , \label{equation_discrete_continuous_2}\\
\int \limits_0^{\infty} \into \mu^N  \phi \dx \dt =&  \int \limits_0^{\infty} \into  \varepsilon \nabla \varphi^N \cdot \nabla \phi \dx \dt + \int \limits_0^{\infty} \into h(q^N) \frac{1}{\varepsilon} H(\varphi^N, \varphi^N_h )  \phi\dx \dt  \nonumber \\\label{equation_discrete_continuous_3}
& + \delta \int \limits_0^\infty \into \partial_{t,h}^- \varphi^N \phi \dx \dt
\end{alignat}
for all $\phi \in C^{\infty}_{(0)} ([0, \infty); C^1 (\overline \Omega))$.

We now derive the energy inequality for the interpolating functions
$\ve^N, q^N, \mu^N$ and $\varphi^N$.
We define the piecewise linear interpolant $E^N (t)$ of $E_{tot} (\ve_k , \varphi_k ,  q_k)$ at $t_k = kh$ by
\begin{align*}
E^N (t) := \frac{(k+1)h - t}{h} E_{tot} (\ve_k , \varphi_k,  q_k ) + \frac{t - kh}{h} E_{tot} (\ve_{k+1}, \varphi_{k+1} , q_{k+1})
\end{align*}
for $t \in [kh , (k+1)h)$. Moreover, we define for all $t \in (t_k , t_{k+1})$, $k \in \mathbb N_0$
\begin{align*}
D^N (t) := & \into m(\varphi_{k}, q_{k}) \left | \nabla q_{k+1} \right | ^2 \dx + \into \tilde m (\varphi_k) |\nabla \mu_{k+1}|^2 \dx + \into 2 \eta (\varphi_k ) |D \ve_{k+1}|^2 \dx  \\
& + \delta \into | \Delta \ve_{k+1}|^2 \dx+ \delta \into \frac{|\varphi_{k+1} - \varphi_k|^2}{h^2} \dx . 
\end{align*}
Thus the time-discrete energy estimate (\ref{equation_discrete_energy_estimate}) yields
\begin{align*}
- \frac{d}{dt} E^N (t) = \frac{E_{tot} (\ve_k, \varphi_k, q_k) - E_{tot} (\ve_{k+1}, \varphi_{k+1}, q_{k+1})}{h} \geq D^N (t) 
\end{align*}
for all $t \in (t_k , t_{k+1})$, $k \in \mathbb N_0$. 
Integrating this inequality, we get the energy estimate for the interpolated functions $\ve^N, q^N, \mu^N$ and $\varphi^N$ given by
\begin{align*}
& E_{tot} \left ( \ve^N_h (t), \varphi^N_h (t), q^N_h (t) \right ) + \int \limits_s^t \into \left ( m(\varphi^N_h, q^N_h)  | \nabla q^N  | ^2   \right . \nonumber \\
& \ \ + \tilde m (\varphi^N_h) | \nabla \mu^N | ^2 + \left . 2 \eta (\varphi^N_h) |D\ve^N|^2 + \delta |\Delta \ve^N|^2 +  \delta  \left | \partial_{t,h}^- \varphi^N \right |^2 \right ) \dx \mathit{d \tau} \nonumber \\
& \leq E_{tot} \left ( \ve^N_h (s), \varphi^N_h (s),  q^N_h (s) \right )
\end{align*}
for all $0 \leq s \leq t < \infty$ with $s, t \in h  \mathbb N_0 $.
This implies:
\begin{enumerate}
\item $ \qquad (\ve^N)_{N \in \mathbb N} \text{ is bounded in } L^\infty (0, \infty; L^2 (\Omega)^d) \cap  L^2 (0, \infty; H^2 (\Omega)^d) $,
\item $ \qquad (\nabla q^N)_{N \in \mathbb N},  (\nabla \mu^N)_{N \in \mathbb N},  ( \partial^-_{t,h} \varphi^N )_{N \in \mathbb N} \text{ are bounded in } L^2 (0,\infty; L^2 (\Omega)) $,
\item $ \qquad  (\nabla \varphi^N)_{N \in \mathbb N} \text{ is bounded in } L^{\infty} (0,\infty; L^2 (\Omega)^d) $,
\item $ \qquad  (W(\varphi^N))_{N \in \mathbb N} \text{ and }  (G(q^N))_{N \in \mathbb N} \text{ are bounded in } L^{\infty} (0,\infty; L^1 (\Omega))$.
\end{enumerate} 
Due to these bounds we can conclude that  there exists a suitable subsequence, which we denote by $(\ve^N, q^N, \mu^N, \varphi^N)_{N \in \mathbb N}$ again, such that
\begin{enumerate}
\item $ \qquad \ve^N \rightharpoonup \ve $ in $L^2 (0, \infty ; H^2 (\Omega)^d )$,
\item $ \qquad \ve^N \rightharpoonup^* \ve $ in $ L^\infty (0, \infty; L^2 (\Omega)^d ) \cong ( L^1 (0, \infty; L^2 (\Omega)^d ))'$,
\item $ \qquad q^N \rightharpoonup q$ in $L^2 (0, T; H^1 (\Omega))$,
\item $ \qquad q^N \rightharpoonup^* q$ in $L^\infty (0 , \infty; L^2 (\Omega)) \cong ( L^1 (0, \infty; L^2 (\Omega)))'$ ,
\item $ \qquad \varphi^N \rightharpoonup^* \varphi$ in $L^{\infty} (0, \infty; H^1 (\Omega)) \cong (L^1 (0,\infty ; H^1 (\Omega)))'$,
\item $ \qquad \mu^N \rightharpoonup \mu$ in $L^2 (0, T; H^1 (\Omega))$
\end{enumerate}
for every $0 < T < \infty$.
Note that in the following we often pass to suitable subsequences $N_k \rightarrow_{k\to \infty} \infty$, which we always denote by $(\ve^N, q^N, \mu^N, \varphi^N)_{N \in \mathbb N}$ again.

Now we want to show that $\varphi $ attains its initial value.
To this end, we denote by $\tilde \varphi^N$ the piecewise linear interpolant of $\varphi^N (t_k)$, where $t_k = kh$, $k \in \mathbb N_0$.
From equation (\ref{equation_discrete_continuous_2}) it follows that $(\partial_t \tilde \varphi^N)_{N \in \mathbb N} \subseteq L^2 (0, \infty ; H^{-1} (\Omega))$ is bounded. Moreover,  $(\tilde \varphi^N)_{N \in \mathbb N}$ is bounded in $L^{\infty} (0, \infty ; H^1 (\Omega))$. Thus we can apply the Aubin-Lions lemma, which yields the relative compactness of $ (\tilde \varphi^N)_{ N \in \mathbb N }$ in $L^p (0,T; L^2 (\Omega))$ for every $0 < T < \infty$ and $1 \leq p < \infty$.
In particular this implies
\begin{align*}
\tilde \varphi^N \rightarrow \tilde \varphi \qquad \text{ in } L^p (0,T; L^2 (\Omega))
\end{align*}
for all $0 < T < \infty$ and $1 \leq p < \infty$, where $\tilde \varphi \in L^{\infty} (0, \infty ; L^2 (\Omega))$. In particular, there exists a subsequence such that $\tilde \varphi^N \rightarrow \tilde \varphi$ pointwise a.e. in $(0, \infty) \times \Omega$. We can even deduce
\begin{align*}
\tilde \varphi^N \rightarrow \varphi \qquad \text{ in } L^2 (0,T; L^2 (\Omega)) 
\end{align*}
for every $0 < T < \infty$ since weak and strong limits coincide.

Since $(\tilde \varphi^N )_{N \in \mathbb N}$ is bounded in $W^1_2 (0, T; H^{-1} (\Omega)) \cap L^2 (0, T ; H^1 (\Omega)) \hookrightarrow C([0,T]; L^2 (\Omega))$, there exists a subsequence such that
\begin{align*}
\tilde \varphi^N \rightharpoonup \varphi \qquad \text{ in } C([0,T]; L^2 (\Omega))
\end{align*}
for every $0 < T < \infty$.
As the mapping $\text{tr}_{t=0} : C ([0,T]; L^2 (\Omega))  \rightarrow L^2 (\Omega) $,  $f  \mapsto f(0)$
is linear and continuous, it is also weakly continuous. Hence, we can deduce $\varphi (0) = \varphi_0$ in $L^2 (\Omega)$.

To prove higher regularity of $\varphi^N$ and $\varphi^N_h$, resp., we use equation (\ref{equation_discrete_continuous_3}), which yields
\begin{align*}
\varepsilon \Delta \varphi^N = \frac{1}{\varepsilon} h(q^N) H(\varphi^N, \varphi^N_h) - \mu^N + \delta \partial^-_{t,h} \varphi^N =: f^N_1 
\end{align*}
in the weak sense. 
Using standard elliptic regularity theory with Neumann boundary condition yields  $\varphi^N (t) \in H^2 (\Omega)$ for a.e. $t \in (0, \infty)$ together with the estimate
\begin{align*}
\|\varphi^N (t)\|_{H^2 (\Omega)} \leq C \left ( \|\varphi^N (t)\|_{H^1 (\Omega)} + \|f^N_1 (t)\|_{L^2 (\Omega)} \right ) \qquad \text{ for a.e. } t \in (0, \infty).
\end{align*}
Since $f^N_1  \in L^2(0, T; L^2 (\Omega))$ is bounded for every $0<T<\infty$, the estimate above implies the boundedness of $\varphi^N , \varphi^N_h \in L^2 (0, T; H^2 (\Omega))$ for every $0<T<\infty$.
Because of
\begin{align*}
\| f \|_{L^\infty ( \Omega)} \leq C \|f\|^{\frac{1}{2}}_{H^1 (\Omega)} \|f\|^{\frac{1}{2}}_{H^2 (\Omega)}\qquad 
\text{for all }f \in H^2 (\Omega),
\end{align*}
we conclude that
$(\varphi^N_h)_{N \in \mathbb N} \text{ is bounded in } L^4 (0, T;
L^\infty (\Omega)) $  and therefore, cf. Theorem 2.32 in
\cite{Dissertation_Weber},
\begin{align*}
(\varphi^N_h)_{N \in \mathbb N} \text{ is bounded in } L^4 (0, T; L^\infty (\Omega)) \cap L^\infty (0,T; L^6 (\Omega)) \hookrightarrow L^{12} (0,T; L^{9} (\Omega)) 
\end{align*}
for every $0 < T < \infty$.
Furthermore, for every $t\in [0,\infty)$ it holds
\begin{align*}
\|\tilde \varphi^N (t) - \varphi^N (t) \|_{L^2 (\Omega)} & \leq C h \|\partial_t \tilde \varphi^N (t) \|_{H^{-1}_0 (\Omega)}^{\frac{1}{2}} \|\tilde \varphi^N (t) - \varphi^N (t)\|_{H^{1} (\Omega)}^{\frac{1}{2}}.
\end{align*}
This implies $\varphi^N \rightarrow \varphi  \text{ in } L^2 (0, T; L^2 (\Omega)) $ for every $0 < T < \infty$.
Due to the interpolation inequality 
\begin{align*}
\|\varphi^N (t) - \varphi (t) \|_{H^1 (\Omega)} \leq C \|\varphi^N (t) - \varphi (t) \|_{H^2 (\Omega)}^{\frac{1}{2}} \|\varphi^N (t) - \varphi (t) \|_{L^2 (\Omega)}^{\frac{1}{2}}
\end{align*}
we obtain $\varphi^N \rightarrow \varphi$ in $L^2 (0, T; H^1 (\Omega))$ for every $0 < T < \infty$.

The proof of the strong convergence of $(q^N)_{N \in \mathbb N}$ in $L^2 (0,T; L^2 (\Omega))$ for a suitable subsequence is a bit more complicated than the proof for $(q^\delta)_{\delta > 0}$ in Theorem \ref{main_theorem_existence_of_weak_solutions}. 
As before, we show the compactness of $( q^N)_{ N \in \mathbb N}$ in $L^2 (Q_T)$ with the aid of Theorem \ref{theorem_simon_5}. 
First, we note that $(q^N)_{N \in \mathbb N}$ is bounded in $L^2 (0,T; H^1 (\Omega))$ and therefore condition i) is fulfilled for every $0 < T < \infty$. For the proof of condition ii) we show
\begin{align}\label{estimate_compactness_q_plus_s_help}
\left ( \int \limits_0^{T-s} \|q^N (t + s) - q ^N (t)\|_{L^2 (\Omega)}^2 \dt \right )^\frac{1}{2} \leq C (T) s^\frac{1}{4}
\end{align}
for all $s = \tilde m h$ with $\tilde m \in \mathbb N$ and a constant $ C(T) > 0$ independent of $s$ and $N \in \mathbb N$. Then \cite[Lemma 9.1]{MR2911126} yields
\begin{align}\label{estimate_compactness_q_plus_lambda_help}
\left ( \int \limits_0^{T-\lambda} \|q^N (t+\lambda) - q^N (t)\|^2_{L^2 (\Omega)} \dt \right ) ^{\frac{1}{2}} \leq C(T) \lambda^\frac{1}{4} 
\end{align}
for any $\lambda > 0$ and a constant $ C(T) > 0$ independent of $\lambda$ and $N$.
This shows condition (ii) of Theorem \ref{theorem_simon_5} and we obtain
the compactness
for the sequence $(q^N)_{N\in \mathbb N}$, Therefore let $s = \tilde mh$ be given for $\tilde m \in \mathbb N$ and $h = \frac{1}{N}$. Moreover, we define
\begin{align*}
\tilde F (\varphi^N, q^N) := \frac{1}{\varepsilon} f(q^N) W(\varphi^N) + g(q^N) , 
\qquad \tilde f (t) := \tilde F (\varphi^N (t), q^N (t)) .
\end{align*}
From the strong monotonicity of $g$ and the monotonicity of $f$ it follows that $\tilde F(\varphi^N, \cdot )$ is strongly monotone as before. Thus there exists a constant $C > 0$ such that
\begin{align*}
\left | \tilde F (\varphi_k , q_{k+\tilde m} ) - \tilde F (\varphi_k , q_k ) \right | \geq C \left | q_{k+ \tilde m} - q_k \right | \geq C \left |q^N (t+ s) - q^N(t) \right |
\end{align*}
for every $t \in [kh, (k+1)h)$.
Multiplying these inequalities with $ |q^N (t+ s) - q^N (t)| $, integrating from $(k-1)h$ to $kh$ with respect to $t$, integrating over the domain $\Omega$ and summing over $k=1,..., TN-m$ for $T \in \mathbb N$ yields
\begin{align}\label{inequality_qNplus_s_qN_estimate}
C  & \sum \limits_{k=1}^{TN- \tilde m}  \into \int \limits_{(k-1)h}^{kh}  |q^N (t+ s) - q^N (t)|^2 \dt \dx \leq 
C \sum \limits_{k = 1}^{TN - \tilde m} \into \int \limits_{(k-1)h}^{kh}|q_{k+\tilde m} - q_k|^2 \dt \dx  \nonumber \\
& \leq  \sum \limits_{k=1}^{TN - \tilde m} \into \int \limits_{(k-1)h}^{kh} ( \tilde F (\varphi_k , q_{k+ \tilde m}) - \tilde F (\varphi_{k+ \tilde m} , q_{k+ \tilde m}) ) (q_{k+ \tilde m} - q_k ) \dt \dx \nonumber \\
& \ \ \ \ +  \sum \limits_{k=1}^{TN - \tilde m} \into \int \limits_{(k-1)h}^{kh} ( \tilde F (\varphi_{k+ \tilde m} , q_{k+ \tilde m}) - \tilde F (\varphi_k , q_k) ) (q_{k+ \tilde m} - q_k ) \dt \dx .
\end{align}
Since $f$ is a bounded function, we can conclude for the first term in  (\ref{inequality_qNplus_s_qN_estimate})
\begin{align*}
 &\sum \limits_{k=1}^{TN - \tilde  m} \into \int \limits_{(k-1)h}^{kh} ( \tilde F (\varphi_k , q_{k+ \tilde m}) - \tilde F (\varphi_{k+ \tilde m} , q_{k+ \tilde m}) ) (q_{k+ \tilde m} - q_k ) \dt \dx \\
& \  \ \ = \int \limits_0^{T-s}  \into   \left(  \tilde F (\varphi^N , q^N_{s+}) - \tilde F (\varphi^N _{s+}, q^N _{s+}) \right ) \left ( q^N _{s+} - q^N  \right ) \dx \dt  \\
& \ \ \  \leq C \int \limits_0^{T-s} \into \left | \left ( \varphi^N _{s+} - \varphi^N  \right )  \left ( |\varphi^N |^2 + |\varphi^N _{s+}|^2 +1 \right ) \right | \  \left | q^N _{s+} - q^N \right | \dx \dt \leq C (T) s^{\frac{1}{4}} ,
\end{align*}
where we used $\varphi^N \in L^\infty (0, \infty; L^6 (\Omega))$, $q^N_{s+}$, $q^N \in  L^2 (0,T-s; L^6 (\Omega))$ and 
\begin{align*}
\underset{0 \leq t \leq T-s}{\sup} \|\varphi^N (t+s) - \varphi^N (t)\|_{L^2 (\Omega)} \leq C(T) s^{\frac{1}{4}} \qquad \text{ a.e. in } (0,\infty)
\end{align*}
for every $0 < T < \infty$ and a constant $C(T) > 0$ depending on $T$. The proof for the latter inequality is similar to the proof of (\ref{delta_inequality_varphi_N_difference_l2}), where we use that there exists $k \in \mathbb N$ such that $t \in [kh, (k+1)h )$ and $t+s \in [(k+ \tilde m)h, (k+ \tilde m+1)h)$. 
From $\varphi^N (t) = \varphi_{k+ \tilde m+1}$ for  $t \in [(k+ \tilde m)h, (k+ \tilde m+1) h)$ we can deduce
\begin{align*}
\varphi^N (t+s) - \varphi^N (t) &= \varphi_{k+ \tilde m+1} - \varphi_{k+1}  = \tilde \varphi^N ((k+ \tilde m+2) h) - \tilde \varphi^N ((k+2) h) \\
&= \tilde \varphi^N ( \tilde t + s) - \tilde \varphi^N (\tilde t),
\end{align*}
where $\tilde t := (k+2) h$. Since $(\tilde \varphi^N)_{N \in \mathbb N} $ is bounded in $L^\infty (0 , T ; H^1 (\Omega)) \cap W^1_2 (0, T ; H^{-1 }_0 (\Omega))$ for all $0<T<\infty$, the statement follows similarly as in (\ref{delta_inequality_varphi_N_difference_l2}).

For the second term in (\ref{inequality_qNplus_s_qN_estimate}), we set  $l (t) := \left \lfloor \frac{t}{h} \right \rfloor$ and $\tilde t (t) := h  \left \lfloor \frac{t}{h} \right \rfloor $. Then it holds $\tilde t (t) = t_k $ for $t \in [kh, (k+1)h)$. Hence, we have
\begin{align*}
&  \sum \limits_{k=1}^{TN - \tilde m} \into \int \limits_{(k-1)h}^{kh} ( \tilde F (\varphi_{k+ \tilde m} , q_{k+ \tilde m}) - \tilde F (\varphi_k , q_k) ) (q_{k+ \tilde m} - q_k ) \dt \dx  \\
& = \sum \limits_{k=1}^{TN - \tilde m} \into \int \limits_{(k-1)h}^{kh} \left ( \sum \limits_{j=1}^{\tilde m} \tilde f (\tilde t (t)+ jh) - \tilde f ( \tilde t (t)+ (j-1) h)  \right ) (q_{k+ \tilde m} - q_k )   \dt \dx \\
& = \int \limits_0^{T-s} \into  \left ( \sum \limits_{j=1}^{\tilde m} \tilde f ( \tilde t (t) + jh) - \tilde f ( \tilde t (t)+ (j-1) h)  \right ) (q_{k+ \tilde m} - q_k )  \dx \dt .
\end{align*}
Using equation (\ref{equation_time_discrete_1}) gives
\begin{align*}
 &\sum \limits_{j=1}^{\tilde m} \tilde f (\tilde t (t)+ jh) - \tilde f ( \tilde t (t) + (j-1) h)  \\
&=   h  \sum \limits_{j=l}^{l+ \tilde m-1}  \left(\di \left ( m (\varphi_j, q_j) \nabla q_{j+1} \right )  -  \nabla \left ( \frac{1}{\varepsilon} f (q_{j+1}) W(\varphi_j) + g(q_{j+1}) \right ) \cdot \ve_{j+1}\right) \\
&= \int \limits_{\tilde t(t)}^{\tilde t(t)+s} \di \left ( m (\varphi^N (\tau), q^N (\tau) ) \nabla q^N (\tau + h) \right ) \mathit{d \tau}   - \int \limits_{\tilde t(t)}^{\tilde t(t)+s}  \nabla g (q^N (\tau + h)) \cdot \ve^N (\tau + h) \mathit{ d \tau} \\
& \ \ \ \ -  \int \limits_{\tilde t(t)}^{\tilde t(t)+s}  \di  \left ( \frac{1}{\varepsilon} f (q^N (\tau + h)) W (\varphi^N (\tau )) \ve^N (\tau + h )\right ) \mathit{d \tau} 
\end{align*}
in $H^{-1}_0 (\Omega) $. 
We use this identity in (\ref{inequality_qNplus_s_qN_estimate}) and obtain
\begin{align*}
& \int \limits_0^{T-s} \into  \left ( \tilde F (\varphi^N _{s+}, q^N _{s+}) - \tilde F (\varphi^N , q^N)  \right )   \left ( q^N _{s+} - q^N  \right ) \dx \dt \\
& \leq  \int \limits_0^{T-s} \into \int \limits_{\tilde t(t)}^{\tilde t(t)+s}  \left  | m (\varphi^N (\tau), q^N (\tau) ) \nabla q^N (\tau + h)  \right | \mathit{d \tau} |\nabla q^N_{s+} - \nabla q^N| \dx \dt \\
& \ \ \  \ +   \int \limits_0^{T-s} \into   \int \limits_{\tilde t(t)}^{\tilde t(t)+s}  \left | g (q^N (\tau + h) \ve^N (\tau + h) \right | \mathit{ d \tau}   |\nabla q^N_{s+} - \nabla q^N| \dx \dt \\
& \ \ \ \ + \int \limits_0^{T-s} \into  \int \limits_{\tilde t(t)}^{\tilde t(t)+s}  \left | \frac{1}{\varepsilon} f (q^N (\tau + h)) W (\varphi^N (\tau )) \ve^N (\tau + h )\right | \mathit{d \tau}  |\nabla q^N_{s+} - \nabla q^N| \dx \dt .
\end{align*}
In the following, we study these three terms separately. The first term can be estimated by
\begin{align*}
& \int \limits_0^{T-s} \into  \int \limits_{\tilde t(t)}^{\tilde t(t)+s}  \left  | m (\varphi^N (\tau), q^N (\tau) ) \nabla q^N (\tau + h)  \right | \mathit{d \tau} \left | \nabla q^N_{s+} - \nabla q^N \right | \dx \dt \\
& \leq C \int \limits_0^{T-s} s^\frac{1}{2} \|q^N\|_{L^2 (0,T; H^1 (\Omega))}   \|\nabla q^N (t+s) - \nabla q^N (t)\|_{L^2 (\Omega)} \dt   \leq C (T) s^\frac{1}{2}  ,
\end{align*}
where $C (T) > 0$.
Using the boundedness of $f$ and the growth condition for $W$, we estimate the third term, similarly as before, by
\begin{align*}
& \int \limits_0^{T-s} \into  \int \limits_{\tilde t(t)}^{\tilde t(t)+s}  \left | \frac{1}{\varepsilon} f (q^N (\tau + h)) W (\varphi^N (\tau )) \ve^N (\tau + h )\right | \mathit{d \tau} \left | \nabla q^N_{s+} - \nabla q^N \right | \dx \dt  \\
& \leq C \int \limits_0^{T-s} s^\frac{1}{4} ( \|\varphi^N\|_{L^{12} (0,T; L^{9} (\Omega))}^3 + 1 ) \|\ve^N\|_{L^2 (0,T; L^6 (\Omega))} \|\nabla q^N (t + s) - \nabla q^N (t)\|_{L^2 (\Omega)} \dt  \\
& \leq C (T) s^\frac{1}{4} .
\end{align*}
For the second term we get in the same way
\begin{align*}
& \int \limits_0^{T-s} \into   \int \limits_{\tilde t(t)}^{\tilde t(t)+s}  \left | g (q^N (\tau + h) \ve^N (\tau + h) \right | \mathit{ d \tau}   \left | \nabla q^N_{s+} - \nabla q^N \right | \dx \dt   \leq C (T) s^\frac{1}{4} .
\end{align*}
Using these estimates it follows that
(\ref{estimate_compactness_q_plus_s_help}) holds and therefore (\ref{estimate_compactness_q_plus_lambda_help}) holds for every $\lambda \in (0,T)$. Thus $(q^N )_{N \in \mathbb N }$ is relatively compact in $L^2 (0,T; L^2 (\Omega))$ and there exists a subsequence such that $q^N \rightarrow \tilde q  \text{ in } L^2 (0, T ; L^2 (\Omega))  $ for all $0<T<\infty$.
In particular, it holds for a subsequence $q^N (t,x) \rightarrow q (t,x)  \text{ a.e. in } (0 , \infty) \times \Omega $.

Now we can show with similar arguments as in \cite{MR3084319} and \cite[Section 2.1]{MR1422251} that  $\ve^N \rightarrow \ve$ in $L^2 (0,T; L^2 (\Omega)^d)$ for all $0 < T < \infty$ as $N \rightarrow \infty$. Thus we can deduce the pointwise convergence a.e. in $(0,T) \times \Omega$. 
Here we can use that from (\ref{equation_discrete_continuous_0}) it follows that $\partial_t ( \mathbb P_\sigma ( \widetilde{ \rho \ve}^N))$ is bounded in $L^1 (0,T; V' (\Omega))$. More precisely, we have the following bounds:
\begin{align*}
\rho^N_h \ve^N \otimes \ve^N & \text{ is bounded in } L^2 (0,T; L^{\frac{3}{2}}(\Omega)^{d \times d}) , \\
\ve^N \otimes \Je^N & \text{ is bounded in } L^\frac{4}{3} (0,T; L^\frac{6}{5} (\Omega)), \\
\mu^N \nabla \varphi^N_h & \text{ is bounded in }  L^2 (0,T; L^{\frac{3}{2}}(\Omega)^d) , \\
\frac{h(q^N)}{\varepsilon} W' (\varphi^N_h) \nabla \varphi^N_h & \text{ is bounded in } L^{\frac{4}{3}} (0,T; L^{\frac{6}{5}} (\Omega)^d)
\end{align*}
for all $0<T<\infty$.
The proof is similar to the one in Theorem \ref{main_theorem_existence_of_weak_solutions}.
Therefore we can use for these terms in (\ref{equation_discrete_continuous_0}) test functions $\boldsymbol \psi \in L^1 (0,T; W^1_6 (\Omega)^d)$. It remains to show
\begin{align*}
 \left|\left \langle \frac{\tilde R^N \ve^N}{2} , \boldsymbol \psi \right \rangle\right| &= \left|\frac{1}{2} \int \limits_0^T \into \frac{\rho^N - \rho^N_h}{h} \ve^N \cdot \boldsymbol \psi \dx \dt  -  \frac{1}{2}   \int \limits_0^T \into \left (  \rho^N_h \ve^N  + \Je^N \right ) \cdot \nabla ( \ve^N  \cdot \boldsymbol \psi )   \dx \dt\right|  \\
& \leq C \|\boldsymbol \psi\|_{L^\infty (0,T; H^2 (\Omega))}.
\end{align*}
Here, we use $\frac{\rho (\varphi^N) - \rho (\varphi^N_h)}{h} \rightharpoonup \rho ' (\varphi) \partial_t \varphi$ in $L^2 (Q_T)$ and $\|\nabla ( \ve^N \cdot \boldsymbol \psi )\|_{L^2(\Omega)}\leq C\|\ve^N\|_{H^1(\Omega)}\|\psi\|_{H^2(\Omega)}$. Thus we can conclude that $(\mathbb P_\sigma ( \widetilde{\rho \ve}^N))_{N \in \mathbb N}$ is bounded in $L^2 (0,T; H^1 (\Omega)^d) \cap W^1_1 (0,T; V (\Omega)')$ and a variant of the Lemma of Aubin-Lions, cf.~\cite[Corollary~5]{MR916688},  yields the existence of $\boldsymbol \omega \in L^2 (0,T; L^2_\sigma (\Omega))$ such that $\mathbb P_\sigma (\widetilde{\rho \ve}^N ) \rightarrow \boldsymbol \omega$ in $L^2 (0,T; L^2_\sigma (\Omega))$. Using the definition of the linear interpolants $\mathbb P_\sigma (\widetilde{\rho \ve}^N)$, one can show $\mathbb P_\sigma ( \widetilde{\rho \ve}^N ) \rightharpoonup \mathbb P_\sigma ( \rho \ve )$ in $L^2 (0,T; L^2_\sigma (\Omega))$. With this, we can prove $\ve^N \rightarrow \ve$ in $L^2 (Q_T)$
as in Section~\ref{section_existence_of_a_weak_solution}.
Note that one can even show
\begin{align*}
\ve^N \rightarrow \ve \qquad \text{ in } L^q (0,T; L^\infty (\Omega)^d) \text{ for every } 1 \leq q <  \frac{8}{3} , 0<T<\infty
\end{align*}
using $\ve^N \rightarrow \ve$ in $L^p(0,T; L^2 (\Omega)^d)$ for every $1 \leq p < \infty$ and the boundedness of $\ve^N $ and $ \ve$ in $L^2 (0,T; H^2 (\Omega)^d)$.

Finally, we pass to the limit $N \rightarrow \infty$  in  \eqref{equation_discrete_continuous_0}-\eqref{equation_discrete_continuous_3}  and prove that $(\ve, \varphi, \mu, q)$ is a weak solution of \eqref{equation_delta_model_0}-\eqref{equation_delta_model_3} in the sense of Definition \ref{definition_weak_solution}. 
It holds
\begin{align*}
\int \limits_0^\infty \into \mu^N \nabla \varphi^N_h \cdot \boldsymbol \psi \dx \dt 
& \rightarrow \int \limits_0^\infty \into \mu \nabla \varphi \cdot \boldsymbol \psi \dx \dt 
\end{align*}
for all  $\boldsymbol \psi \in C^\infty_{(0)} ([0, \infty) ; V (\Omega) )$ due to $\mu^N \rightharpoonup  \mu$ in  $L^2 (0,T; H^1 (\Omega))$ for all $0<T<\infty$ and 
\begin{align*}
\varphi^N_h & \rightarrow \varphi, \varphi^N  \rightarrow \varphi\quad \text{ in } L^p (0,T; H^1 (\Omega))
\end{align*}
for all $1\leq p<\infty$, $0<T<\infty$.
Here the last two convergences hold due to the interpolation result
\begin{align*}
\|\nabla \varphi^N - \nabla \varphi\|_{L^p (0,T; L^2 (\Omega))} & \leq \|\nabla \varphi^N - \nabla \varphi\|_{L^\infty (0,T; L^2 (\Omega))}^{1  - \theta} \|\nabla \varphi^N - \nabla \varphi\|_{L^2 (0,T; L^2 (\Omega))}^\theta
\end{align*}
for every $\theta \in (0,1)$ and $\frac{1}{p} = \frac{\theta}{2}$.

For the proof of
\begin{align*}
\left \langle \frac{\tilde R^N \ve^N}{2} , \boldsymbol \psi \right \rangle \rightarrow_{N\to\infty} \left \langle \frac{\tilde R\ve}{2} ,\boldsymbol \psi \right \rangle
\end{align*}
for all $\boldsymbol \psi \in C^\infty_{(0)} ([0,\infty);  V (\Omega) )$ we use $\ve^N \rightarrow \ve \text{ in } L^p (0,T; L^4 (\Omega)^d) \text{ for all } 1 \leq p < \frac{8}{3}$ and $\ve^N \otimes \ve^N \rightarrow \ve \otimes \ve  \text{ in } L^q (0,T; L^2 (\Omega)^{d \times d}) \text{ for every } 1 \leq q < \frac{4}{3}$ and all $0<T<\infty$ as well as
\begin{equation*}
  \partial_{t,h}^-\varphi^N\rightharpoonup_{N\to\infty} \partial_t \varphi \qquad \text{in }L^2((0,\infty)\times \Omega)
\end{equation*}
for a suitable subsequence.

In (\ref{equation_discrete_continuous_3}) we obtain from the growth condition for $h$ 
\begin{align*}
\left | h(q^N) \frac{1}{\varepsilon} H(\varphi^N, \varphi^N_h) \right | \leq \frac{C}{\varepsilon} (|q^N| + 1)    \left ( |\varphi^N (t)|^2 + |\varphi^N (t-h)|^2 + 1 \right )  ,
\end{align*}
for a.e. $(t,x) \in (0, \infty ) \times \Omega$. From the boundedness of $q^N \in L^2 (0, T; L^6 (\Omega))$ and $\varphi^N , \varphi^N_h \in L^4 (0,T; L^\infty (\Omega))$  we can deduce the boundedness of $h(q^N) \frac{1}{\varepsilon} H(\varphi^N, \varphi^N_h)$  in $L^{\frac{4}{3}} (0,T; L^6 (\Omega))$ for all $0<T<\infty$.
It holds 
\begin{align*}
h(q^N) \frac{1}{\varepsilon} H(\varphi^N, \varphi^N_h)  = \begin{cases} h(q^N)  \frac{1}{\varepsilon} W' (\varphi^N (t-h)) & \text{ if } \varphi^N (t) = \varphi^N (t-h) , \\  h(q^N) \frac{1}{\varepsilon} W' (\xi_ {t, t-h} (x))  & \text{ if } \varphi^N (t) \neq \varphi^N (t-h) , \end{cases}
\end{align*}
where $\xi _{t,t-h}(x) \in [\varphi^N (t-h, x), \varphi^N (t,x)] = [\varphi_k (x) , \varphi_{k+1} (x)]$ for $k \in \mathbb N_0$ such that $t \in [kh, (k+1)h)$.
Since it holds $\varphi^N (t,x) \rightarrow \varphi (t,x)$ a.e and $\varphi^N (t-h, x)  \rightarrow \varphi(t,x)$ a.e., it follows
\begin{align*}
h(q^N) \frac{1}{\varepsilon} H(\varphi^N, \varphi^N_h)_{|(t,x)} \rightarrow h(q(t,x)) \frac{1}{\varepsilon} W' (\varphi(t,x)) \qquad \text{ a.e. in } (0, \infty) \times \Omega
\end{align*}
as $N \rightarrow \infty$.
However, these terms are bounded e.g. in $L^{\frac{4}{3}} ((0, T) \times \Omega)$. Hence, 
\begin{align*}
h(q^N) \frac{1}{\varepsilon} H(\varphi^N, \varphi^N_h) \rightarrow h(q) \frac{1}{\varepsilon} W' (\varphi)  \quad \text{ in } L^p ((0, T) \times \Omega) , \ 1 \leq p < \frac{4}{3}, \ 0 < T < \infty 
\end{align*}
and therefore
\begin{align*}
\int \limits_0^\infty \into h(q^N) \frac{1}{\varepsilon}H(\varphi^N, \varphi^N_h) \phi \dx \dt \rightarrow \int \limits_0^\infty \into h(q) \frac{1}{\varepsilon} W' (\varphi) \phi \dx \dt
\end{align*}
for all $\phi \in C_{(0)}^\infty ([0, \infty) ; C^1 (\overline \Omega))$.

The energy inequality in the case $\delta > 0$ can be proven as in the proof of Theorem \ref{main_theorem_existence_of_weak_solutions} again.
\end{proof}


\bibliographystyle{abbrv}

\end{document}